\newcommand{\R}{\mathbb{R}}
\def\lz{\lambda}
\def\gs{\gtrsim}
\def\tbz{{\triangle_\lz}}
\newtheorem{thm}{Theorem}[section]
\newtheorem{lem}[thm]{Lemma}
\newtheorem{prop}[thm]{Proposition}
\newtheorem{defn}[thm]{Definition}
\numberwithin{equation}{section}
\begin{document}

\arraycolsep=1pt

\title{\Large\bf Muckenhoupt-type Weights and Quantitative Weighted Estimates in the Bessel Setting}
\author{Ji Li, Chong-Wei Liang,  Chun-Yen Shen, and Brett D. Wick}

\date{}
\maketitle

{\noindent  {\bf Abstract:}
Part of the intrinsic structure of singular integrals in the Bessel setting is captured by Muckenhoupt-type weights. Anderson--Kerman showed that the Bessel Riesz transform is bounded on weighted $L^p_w$ if and only if $w$ is in the class $A_{p,\lambda}$.    We introduce a new class of Muckenhoupt-type weights $\widetilde A_{p,\lambda}$  in the Bessel setting, which is different from $A_{p,\lambda}$ but characterizes the weighted boundedness for the Hardy--Littlewood maximal operators. We also establish the weighted $L^p$ boundedness and compactness, as well as the endpoint weak type boundedness of Riesz commutators. The quantitative weighted bound is also established.

\section{Introduction}
\subsection{Background}

In this paper, we develop new Muckenhoupt-type weights and establish the weighted boundedness characterization in the Bessel setting,
along the line of Muckenhoupt and Stein \cite{muckenhoupt1965classical}, Anderson--Kerman \cite{andersen1981weighted}, and our recent result \cite{MR4737319}.

Consider the Bessel operator $\tbz$ defined by Muckenhoupt and Stein in \cite{muckenhoupt1965classical}
\begin{equation*}
\tbz :=-\frac{d^2}{dx^2}-\frac{2\lz}{x}\frac{d}{dx},\quad \lz>0
\end{equation*}
with  measure $d\mu(x):= x^{2\lz}\,dx$ and the Bessel Riesz transform $R_\lambda:=\frac{d}{dx}(\triangle_\lambda)^{-\frac{1}{2}}$.

As noted in \cite{MR4737319}, if we just simply treat $(\R_+,|\cdot|, d\mu)$  as a space of homogeneous type in the sense of Coifman and Weiss \cite{coifman1977extensions}, then it does not fully reflect the intrinsic structure in the Bessel setting.
For example, Anderson and Kerman \cite{andersen1981weighted} first introduced a Muckenhoupt-type weights $A_{p,\lambda}$ with respect to the Lebesgue measure (but not $dm_\lambda$), i.e., for $w\in A_{p,\lambda}$
\begin{align}\label{new weight Lp}
\|f\|_{L^p(\R_+,wdx)}:=\bigg(\int_0^\infty |f(x)|^pw(x)dx\bigg)^{1\over p },
\end{align}
and proved: 
``$R_\lambda$ is bounded on $L^p_w(\R_+)$ (in the sense of \eqref{new weight Lp}) {\bf if and only if }
$w$ is in $A_{p,\lambda}$''.

In \cite{MR4737319} we studied the natural question: does $A_{p,\lambda}$ also characterize  the weighted boundedness of Hardy--Littlewood maximal function in the Bessel setting $(\R_+,|\cdot|, d\mu)$? The answer is no! The Hardy--Littlewood maximal function is linked to another Muckenhoupt-type weight introduced in \cite{MR4737319}: $\widetilde A_{p,\lambda}$ (given in Definition \ref{tilde Ap}).

The aim of this paper is to study the weighted estimates with respect to $\widetilde A_{p,\lambda}$, including 

1. quantitative weighted norm for  $R_\lambda$; 

2. the weighted $L^p$ boundedness, compactness of the commutator $[b,R_\lambda]$ in terms of BMO, and VMO spaces, respectively; 

3. the weighted endpoint estimate for $[b,R_\lambda]$ with respect to $\widetilde A_{1,\lambda}$.

Before stating our main results, it is natural to ask, is there any difference between these weighted estimates and those obtained before in the classical $A_p(\mu)$ setting on $(\R_+,|\cdot|, d\mu)$? 

Recall that the weighted $L^p$ for the classical $A_p(\mu)$ setting on $(\R_+,|\cdot|, d\mu)$ is given by 
$$\|f\|_{L^p(\R_+,wd\mu)}:=
\bigg(\int_0^\infty |f(x)|^pw(x)d\mu(x)\bigg)^{1\over p }.$$

We first point out that $A_p(\mu)$ and $\widetilde A_{p,\lambda}$ are different. As classes of weights, they have a non-trivial intersection, but are distinct in that one does not contain the other, which is reflected by the indices of the power weight.
\begin{prop}\label{Apmu Aplambda}
    Let $w(t):=t^{\alpha}$ on $\R_+$. Then we have\\
(1) $w\in A_{p} (\mu)\Longleftrightarrow -1-2\lambda<\alpha<p-1+2\lambda(p-1)$,\\
(2) $w\in\widetilde{A}_{p,\lambda}\Longleftrightarrow -1<\alpha<p-1+(2\lambda+1) p.$
\end{prop}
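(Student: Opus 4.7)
The plan is to prove (1) and (2) by the same two-step scheme: (a) reduce the $A_p$-type supremum defining each class to the one-parameter family $I_r:=(0,r)$, $r>0$, and (b) on each $I_r$ use scaling to turn the question into a pair of elementary integrability checks at the endpoint $0$.

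For step (a): on any interval $J=(x_0-r,x_0+r)\subset\R_+$ with $r\le x_0/2$, the weight $w(t)=t^{\alpha}$ and the density $t^{2\lambda}$ of $d\mu$ are essentially constant on $J$, so each $A_p$-type product collapses to that of a constant weight and is $O(1)$; and any interval that stretches to the origin is comparable, in the relevant measures, to some $I_r$. Hence the supremum defining each class is (up to a universal constant) attained on $\{I_r\}_{r>0}$.

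For step (b): on a fixed $I_r$ the dilation $t\mapsto rt$ turns each average in the $A_p$-product into a universal constant (in $\alpha,\lambda,p$) times a pure power of $r$. Because the product pairs one copy of $w$ with a $(p-1)$-th power of $w^{1-p'}$, the $r$-exponents cancel automatically, and the product is $r$-independent provided the constants are finite, i.e.\ provided a pair of integrals of the form $\int_0^{1}t^{\beta}\,dt$ converges. For (1), the averages are taken with respect to $d\mu=t^{2\lambda}\,dt$, and convergence demands $\alpha>-1-2\lambda$ and $\alpha<(p-1)(2\lambda+1)=p-1+2\lambda(p-1)$. For (2), unwinding the mixed Lebesgue/$\mu$ averages of Definition~\ref{tilde Ap} and repeating the same scaling yields the two exponent inequalities $\alpha>-1$ and $\alpha<p-1+(2\lambda+1)p$.

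The only non-routine point is the localization (a) for $\widetilde A_{p,\lambda}$: because the averages in Definition~\ref{tilde Ap} mix Lebesgue and Bessel measures, one cannot merely invoke the standard $A_p$-reduction on the doubling space $(\R_+,|\cdot|,d\mu)$. One has to verify by direct comparison that intervals separated from the origin give an $O(1)$ contribution and that intervals meeting the origin are comparable to some $I_r$ in the precise measures entering Definition~\ref{tilde Ap}. Granted that, the two parts become two-line exponent computations, and the gap $p+2\lambda$ between the upper bounds (and $2\lambda$ between the lower ones) makes the non-containment of $A_p(\mu)$ and $\widetilde A_{p,\lambda}$ transparent, as advertised in the discussion preceding the proposition.
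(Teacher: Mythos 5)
Your two-step scheme for part (1) --- split into intervals far from the origin (where $t^{\alpha}$ and $t^{2\lambda}$ are essentially constant and the $A_p$-product telescopes to $O(1)$) versus intervals near the origin (reduce to $(0,r)$ and check integrability at $0$) --- is exactly what the paper does: their Case (1) is $|x_0|\geq 3R$, their Case (2) is $|x_0|<3R$ with replacement of $B_R(x_0)$ by $B_{4R}(0)$, ending with the conditions $\alpha+2\lambda>-1$ and $-\alpha p'/p+2\lambda>-1$. So for (1) you are on the paper's route.

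For part (2) the comparison is different, because the paper does not prove it at all: it simply recalls the equivalence $w\in\widetilde{A}_{p,\lambda}\Leftrightarrow -1<\alpha<p-1+(2\lambda+1)p$ as an established fact from \cite{MR4737319}. Your proposal, in contrast, offers an actual (sketched) proof by the same two-step scheme. Your exponent bookkeeping is correct: on $B=(0,r)$ the two integrands in Definition~\ref{tilde Ap} are $t^{\alpha}$ and $t^{(2\lambda+1)p'-\alpha/(p-1)}$, whose convergence at $0$ gives $\alpha>-1$ and $\alpha<(p-1)[1+(2\lambda+1)p']=p-1+(2\lambda+1)p$, and the $r$-powers cancel because $(\alpha-2\lambda-1)+\bigl((2\lambda+1)-\alpha\bigr)=0$. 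The one thing you flag but do not execute --- the localization (a) for the mixed Lebesgue/$\nu_\lambda$ averages --- is in fact routine: for an interval $J=(x_0-r,x_0+r)$ with $x_0\geq 3r$, every power weight in the $\widetilde{A}_{p,\lambda}$ product is $\sim$ its value at $x_0$, and the exponents of $x_0$ telescope to $0$ by the same cancellation, so the far case is $O(1)$; intervals touching $0$ are handled by replacing them by $(0,Cr)$ as in the paper's Case (2). Filling in that comparison closes the gap, and your argument then gives a self-contained proof of (2) rather than a citation, which is a modest but genuine improvement in completeness over the paper's exposition.
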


\subsection{Statement of main results}
We first recall the definition of new Muckenhoupt weight class $\widetilde A_{p,\lambda}$ in \cite{MR4737319} (throughout this paper, we denote  
$d\nu_{\lambda}(x):=x^{2\lambda +1}dx$).
\begin{defn}\label{tilde Ap}
Suppose $1<p<\infty$.  For a non-negative measurable function $w$, $w\in \widetilde{A}_{p,\lambda}$ if there exists a constant $C>0$ such that for all interval $B\subset \R_+$ we have
  $$
\bigg(\frac{1}{\nu_\lambda(B)}\int_B w(t)dt\bigg)\bigg(\frac{1}{\nu_\lambda(B)}\int_B t^{
(2\lambda+1) p'}w(t)^{-\frac{1}{p-1}}dt\bigg)^{p-1}<C.
$$
For $p=1$,
$w\in \widetilde{A}_{1,\lambda}$ if there exists a constant $C>0$ such that for all interval $B\subset \R_+$ we have
$$
\frac{w(B)}{\nu_\lambda(B)}\leq C\frac{w(x)}{\nu_\lambda(x)},
$$  
for almost all $x\in B$.  For $p=\infty$, we define the borderline class $\widetilde{A}_{\infty,\lambda}$ to be
$
\widetilde{A}_{\infty,\lambda}:=\underset{p\geq1}{\bigcup}\widetilde{A}_{p,\lambda}.
$
\end{defn}
As noted in \cite{MR4737319}, $\widetilde{A}_{1,\lambda}$ is different from $A_{1,\lambda}$. $A_{1,\lambda}$ is not the limiting class of $A_{p,\lambda}$ (as showed in \cite{andersen1981weighted}), while  $\widetilde{A}_{1,\lambda}$ is the limiting class of $\widetilde{A}_{p,\lambda}$.

\subsubsection{Quantitative weighted estimates and characterisations of BMO space}

Let $\mathcal{D}$ be the system of dyadic cubes on  $(\R_+,|\cdot|,\mu)$, and $S\subset\mathcal{D}$ be the sparse family of cubes (details of definition are provided in Section 2). Along the line of \cite{a2bound} and \cite{LERNER2019225}, we consider the sparse operator on $(\R_+,|\cdot|,\mu)$.
\begin{defn}[\cite{duong2021two}]
Let $\Lambda\in (0,1)$ and $S$ be a $\Lambda$-sparse family of dyadic cubes. The sparse operator associated with the collection $S$ is defined by
    \[
\mathcal{A}_{\mathcal{S}}f(x):=\sum_{Q\in\mathcal{S}}\frac{1}{\mu(Q)}\int_{Q}f(t)\,d\mu(t)\cdot\chi_{Q}(x).
    \]
\end{defn}
In \cite{duong2021two}, they treated 
$(\R_+, |\cdot|, \mu)$ as a space of homogeneous type and obtained the quantitative estimate via the $A_p(\mu)$. As already pointed out in Proposition \ref{Apmu Aplambda}, $A_p(\mu)$ and our new weight class are different. 

Thus, the first main result is the quantitative  estimates for the the new class of weights.
\begin{thm}\label{thma}
  Let $1<p<\infty$. Then for $w\in\widetilde{A}_{p,\lambda-\frac{1}{2}} $, one has \[
\|\mathcal{A}_{\mathcal{S}}\|_{L^p(\R_+,wdx)\to L^p(\R_+,wdx)} \lesssim [w]^{\max\{1,\frac{1}{p-1}\}}_{\widetilde{A}_{p,\lambda-\frac{1}{2}}}.
  \]  
\end{thm}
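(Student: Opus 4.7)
The plan is to reduce the theorem to the quantitative sparse bound on the space of homogeneous type $(\R_+, |\cdot|, \mu)$ already established in \cite{duong2021two}. The key arithmetic observation is that $d\nu_{\lambda-1/2}(x) = x^{2(\lambda-1/2)+1}dx = x^{2\lambda}dx = d\mu(x)$, so if I introduce the gauge-transformed weight $W(x) := w(x)\,x^{-2\lambda}$ then $W\,d\mu = w\,dx$ identically, and hence $L^p(\R_+, w\,dx) = L^p(\R_+, W\,d\mu)$ as normed spaces. A short calculation converts the $\widetilde{A}_{p,\lambda-1/2}$ characteristic of $w$ into the classical $A_p(\mu)$ characteristic of $W$: integrating $W$ against $d\mu$ recovers $\int_B W\,d\mu = \int_B w\,dx = w(B)$, while
\begin{equation*}
\int_B W(t)^{-\frac{1}{p-1}}\,d\mu(t) = \int_B w(t)^{-\frac{1}{p-1}}\,t^{2\lambda + \frac{2\lambda}{p-1}}\,dt = \int_B w(t)^{-\frac{1}{p-1}}\,t^{2\lambda p'}\,dt,
\end{equation*}
using the identity $2\lambda + 2\lambda/(p-1) = 2\lambda p'$ (which is also why the exponent in Definition \ref{tilde Ap} reads $(2(\lambda-1/2)+1)p' = 2\lambda p'$). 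Dividing by $\mu(B)$ in each factor gives the exact equality $[W]_{A_p(\mu)} = [w]_{\widetilde{A}_{p,\lambda-1/2}}$.

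With this dictionary in hand, the sparse operator $\mathcal{A}_{\mathcal{S}}$ is left untouched because its definition is intrinsic to $\mu$, and the theorem follows by applying the quantitative sparse bound for $A_p(\mu)$-weights on $(\R_+,|\cdot|,\mu)$ from \cite{duong2021two}, namely
\begin{equation*}
\|\mathcal{A}_{\mathcal{S}}\|_{L^p(W d\mu) \to L^p(W d\mu)} \lesssim [W]_{A_p(\mu)}^{\max\{1,\frac{1}{p-1}\}},
\end{equation*}
and then substituting the identities above. This yields the desired estimate with the same exponent $\max\{1,1/(p-1)\}$.

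The main obstacle is really just ensuring the indices line up correctly: the shift $\lambda \mapsto \lambda - 1/2$ between the Bessel parameter (appearing in $d\mu = x^{2\lambda}dx$) and the index of the weight class (defined via $d\nu_{\lambda-1/2}$), together with the identity $2\lambda + 2\lambda/(p-1) = 2\lambda p'$ that reconciles the two power-weight exponents. Conceptually, although $\widetilde{A}_{p,\lambda}$ and $A_p(\mu)$ are genuinely distinct weight classes (as Proposition \ref{Apmu Aplambda} makes precise), they become equivalent under the elementary gauge transformation $w \mapsto w\,x^{-2\lambda}$, and this is what allows the sharp classical exponent $\max\{1,1/(p-1)\}$ from Hyt\"onen's $A_2$ theorem (extended to spaces of homogeneous type) to transfer verbatim to the new Muckenhoupt class in the Bessel setting. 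If an entirely self-contained argument were preferred, one could instead run the standard principal-cube/stopping-time scheme directly using the $\widetilde{A}_{p,\lambda-1/2}$ testing condition in place of the usual $A_p$ condition, but the reduction above is much shorter and transparently explains the exponent.
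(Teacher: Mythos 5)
Your gauge-transformation argument is correct, and the key computational identity $[W]_{A_p(\mu)} = [w]_{\widetilde A_{p,\lambda-1/2}}$ with $W := w\,x^{-2\lambda}$ checks out: since $2(\lambda-\tfrac12)+1=2\lambda$, both $\nu_{\lambda-1/2}$ and $\mu$ coincide, and the two integrals in the $A_p(\mu)$ characteristic of $W$ unwind exactly to the two integrals defining $[w]_{\widetilde A_{p,\lambda-1/2}}$. Since $\mathcal A_{\mathcal S}$ is built only from $\mu$-averages and $L^p(\R_+,w\,dx)=L^p(\R_+,W\,d\mu)$ isometrically, the reduction to the $A_p(\mu)$ sparse bound on the homogeneous-type space $(\R_+,|\cdot|,\mu)$ is legitimate, and the exponent transfers verbatim.

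This is, however, a genuinely different route from the paper's. The paper runs a self-contained duality argument: it introduces $\sigma(t)=w(t)^{1-p'}$ and $\sigma_*(t)=t^{2\lambda p'}w(t)^{1-p'}$, pairs $\mathcal A_{\mathcal S}(f\sigma)$ against $g$ in $L^{p'}(\R_+,w\,dx)$, inserts the $\widetilde A_{p,\lambda-1/2}$ characteristic cube-by-cube, exploits sparseness to pass from $Q$ to $E_Q$, splits by H\"older, and closes with the boundedness of the dyadic maximal operators $M^D_{\sigma_*}$ and $M^D_w$; the case $1<p<2$ is then handled by duality (the relation $[\sigma_*]_{\widetilde A_{p',\lambda-1/2}}=[w]^{1/(p-1)}_{\widetilde A_{p,\lambda-1/2}}$). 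Your $\sigma_*$ is in fact the gauge-transformed dual weight: $\sigma_*\,dx = W^{1-p'}\,d\mu$, so the paper is implicitly working with the same object but never makes the change of weight explicit. What your reduction buys is brevity and conceptual clarity — it isolates why $\widetilde A_{p,\lambda-1/2}$ is the natural class for $L^p(\R_+,w\,dx)$, and it quietly resolves the tension the paper sets up when it stresses (after Proposition~\ref{Apmu Aplambda}) that $A_p(\mu)$ and $\widetilde A_{p,\lambda-1/2}$ are different classes of weights: they are different as subsets of weights, but isomorphic under $w\mapsto w\,x^{-2\lambda}$, which is precisely what the norm identity exploits. What the paper's argument buys is self-containment: it does not rely on importing the sharp $A_p(\mu)$ sparse bound on $(\R_+,|\cdot|,\mu)$ from \cite{duong2021two} as a black box, and its intermediate steps (the dual-weight bookkeeping, the appearance of $M^D_{\sigma_*}$ applied to $f\cdot t^{2\lambda(1-p')}$) are reused in the commutator estimate of Theorem~\ref{thmb}, where the gauge trick alone would not close the argument. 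One minor caveat on attribution: the exponent $\max\{1,1/(p-1)\}$ for general $p$ is due to Lerner's sparse extrapolation rather than to Hyt\"onen's $A_2$ theorem itself, though this does not affect the proof.
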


Next, we establish the weighted boundedness and compactness characterizations for the commutator 
$[b,R_\lambda]$ with respect to $\widetilde A_{p,\lambda-{1\over2}}$. Note that the weighed boundedness of $[b, R_\lambda]$ with respect to $A_{p,\lambda}$ was first established in \cite{MR4737319} while the compactness with respect to $A_{p,\lambda}$ is still open.


\begin{defn}[Sparse Operator associated to the BMO function]~\\
Let $\Lambda\in (0,1)$ and $S$ be a $\Lambda$-sparse family of dyadic cubes and $b\in BMO_{\triangle_\lambda}$. The sparse operator associated with the BMO function and the collection $S$ is defined by
    \[
\mathcal{A}_{\mathcal{S},b}f(x):=\sum_{Q\in\mathcal{S}}\frac{1}{\mu(Q)}|b(x)-b_Q|\int_{Q}f(t)\,d\mu(t)\cdot\chi_{Q}(x).
    \]
\end{defn}
The adjoint of $\mathcal{A}_{\mathcal{S},b}$ with respect to the underlying measure $\mu$ is $\mathcal{A}^*_{\mathcal{S},b}$, given by
 \[
\mathcal{A}^*_{\mathcal{S},b}f(x):=\sum_{Q\in\mathcal{S}}\frac{1}{\mu(Q)}\int_{Q}|b(t)-b_Q|f(t)\,d\mu(t)\cdot\chi_{Q}(x).
 \]



The second main result is as follows.
\begin{thm}\label{thmb}
Let $1<p<\infty$. Then for $w\in\widetilde{A}_{p,\lambda-\frac{1}{2}} $, we have
\[
\|\mathcal{A}_{\mathcal{S},b}\|_{L^p(\R_+,wdx)\to L^p(\R_+,wdx)} ,\ \ \|\mathcal{A}^*_{\mathcal{S},b}\|_{L^p(\R_+,wdx)\to L^p(\R_+,wdx)} \lesssim \|b\|_{{\rm BMO}_{\triangle_\lambda}(\R_+)}[w]^{2\max\{1,\frac{1}{p-1}\}}_{\widetilde{A}_{p,\lambda-\frac{1}{2}}}.
  \]  
\end{thm}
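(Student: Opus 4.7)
The plan is to reduce the sparse--BMO operator $\mathcal{A}_{\mathcal{S},b}$ to a pointwise composition of two ordinary sparse operators $\mathcal{A}_{\mathcal{S}}$, then to apply Theorem~\ref{thma} twice; the doubled exponent $2\max\{1,1/(p-1)\}$ in the conclusion is precisely the product of the two factors arising from Theorem~\ref{thma}. The adjoint $\mathcal{A}^*_{\mathcal{S},b}$ is then handled by duality within the class $\widetilde{A}_{p,\lambda-\frac{1}{2}}$.

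The first ingredient is a sparse decomposition of the BMO oscillation in the Bessel setting: for $b\in {\rm BMO}_{\triangle_\lambda}(\R_+)$ and any dyadic cube $Q\in\mathcal{D}$, I would build a $\frac{1}{2}$-sparse subfamily $\mathcal{F}(Q)\subseteq \mathcal{D}(Q)$ such that, $\mu$-a.e.,
\begin{equation*}
|b(x)-b_Q|\,\chi_Q(x)\ \leq\ C\,\|b\|_{{\rm BMO}_{\triangle_\lambda}}\sum_{R\in\mathcal{F}(Q)}\chi_R(x),
\end{equation*}
where $b_Q=\frac{1}{\mu(Q)}\int_Q b\,d\mu$. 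The construction is the standard iterated Calder\'on--Zygmund stopping-time on each $Q$, with $\frac{1}{2}$-sparsity following from a John--Nirenberg-type inequality for ${\rm BMO}_{\triangle_\lambda}$ in $(\R_+,|\cdot|,\mu)$. Substituting into $\mathcal{A}_{\mathcal{S},b}f$ and exchanging the order of summation, for $y\in R\subseteq Q\in\mathcal{S}$ one has $\frac{1}{\mu(Q)}\int_Q f\,d\mu\leq \mathcal{A}_{\mathcal{S}}f(y)$, so averaging in $R$ against $d\mu$ gives $\frac{1}{\mu(Q)}\int_Q f\,d\mu\leq \frac{1}{\mu(R)}\int_R\mathcal{A}_{\mathcal{S}}f\,d\mu$. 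After splitting $\bigcup_{Q\in\mathcal{S}}\mathcal{F}(Q)$ into finitely many sparse families $\widetilde{\mathcal{S}}_1,\ldots,\widetilde{\mathcal{S}}_N$ via a standard $\frac{3}{4}$-type reorganization, one obtains the pointwise domination
\begin{equation*}
\mathcal{A}_{\mathcal{S},b}f(x)\ \lesssim\ \|b\|_{{\rm BMO}_{\triangle_\lambda}}\sum_{j=1}^{N}\mathcal{A}_{\widetilde{\mathcal{S}}_j}\bigl(\mathcal{A}_{\mathcal{S}}f\bigr)(x).
\end{equation*}

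Two applications of Theorem~\ref{thma} now yield the claimed bound for $\mathcal{A}_{\mathcal{S},b}$. For the adjoint, writing out the pairing $\int \mathcal{A}^*_{\mathcal{S},b}f \cdot h\,dx$ and using $d\mu=t^{2\lambda}dt$ converts the $L^p(\R_+,wdx)\to L^p(\R_+,wdx)$ bound for $\mathcal{A}^*_{\mathcal{S},b}$ into the $L^{p'}(\R_+,\sigma\,dx)\to L^{p'}(\R_+,\sigma\,dx)$ bound for $\mathcal{A}_{\mathcal{S},b}$, where $\sigma(t):=t^{2\lambda p'}w(t)^{1-p'}$. A direct check of the $\widetilde{A}_{p,\lambda-\frac{1}{2}}$ defining condition shows $\sigma\in\widetilde{A}_{p',\lambda-\frac{1}{2}}$ with $[\sigma]_{\widetilde{A}_{p',\lambda-\frac{1}{2}}}=[w]_{\widetilde{A}_{p,\lambda-\frac{1}{2}}}^{1/(p-1)}$, and the identity $\frac{1}{p-1}\cdot 2\max\{1,1/(p'-1)\}=2\max\{1,1/(p-1)\}$ preserves the exponent in the final bound.

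The main obstacle is the BMO sparse decomposition in Step~1: since the averages $b_Q$, the BMO norm, and the sparse stopping must all be taken with respect to $d\mu$ (to be compatible with the $\mu$-averages in the sparse operator), one needs a John--Nirenberg-type inequality for ${\rm BMO}_{\triangle_\lambda}$ adapted to the dyadic system on $(\R_+,|\cdot|,\mu)$, together with verification that the stopping family is $\mu$-sparse. A secondary technicality is the regrouping of $\bigcup_{Q\in\mathcal{S}}\mathcal{F}(Q)$ into a bounded number of sparse families with uniform constants, which is standard but must be tracked carefully so as not to inflate the final exponent.
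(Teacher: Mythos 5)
Your proposal is on the right track and arrives at the same mechanism as the paper: decompose the oscillation $|b(x)-b_Q|\chi_Q(x)$ sparsely, reduce $\mathcal{A}_{\mathcal{S},b}$ to a composition of two ordinary sparse operators, and then apply Theorem~\ref{thma} twice to obtain the exponent $2\max\{1,\tfrac1{p-1}\}$. The adjoint case via duality with $\sigma_*(t)=t^{2\lambda p'}w(t)^{1-p'}\in\widetilde{A}_{p',\lambda-\frac12}$ is also the right move; the paper dispatches this with ``the same argument.''

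The actual proof differs in one structural choice. Rather than aiming for a pointwise domination $\mathcal{A}_{\mathcal{S},b}f\lesssim\|b\|\sum_j\mathcal{A}_{\widetilde{\mathcal{S}}_j}(\mathcal{A}_{\mathcal{S}}f)$, the paper works with the dual pairing $\langle \mathcal{A}_{\mathcal{S},b}f,g\rangle_{w\,dx}$, inserts the sparse domination of $|b-b_Q|$, and then uses the $\mu$-self-adjointness of $\mathcal{A}_{\tilde{\mathcal{S}}}$ to shift one sparse operator from $\tilde g:=gwx^{-2\lambda}$ onto $f$, landing on $\int\mathcal{A}_{\tilde{\mathcal{S}}}(\mathcal{A}_{\tilde{\mathcal{S}}}|f|)\,|g|\,w\,dx$. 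This avoids the bookkeeping issue in your Step~1: the paper invokes the Lerner--Ombrosi--Rivera-R\'ios form of the lemma, namely a \emph{single} sparse family $\tilde{\mathcal{S}}\supseteq\mathcal{S}$ such that $|b(x)-b_Q|\chi_Q(x)\lesssim\sum_{P\in\tilde{\mathcal{S}},\,P\subset Q}\bigl(\tfrac1{\mu(P)}\int_P|b-b_P|\,d\mu\bigr)\chi_P(x)$ simultaneously for every $Q\in\mathcal{S}$. Your formulation ``build $\mathcal{F}(Q)\subseteq\mathcal{D}(Q)$ for each $Q$ and take $\bigcup_Q\mathcal{F}(Q)$'' produces a double sum $\sum_Q\sum_{R\in\mathcal{F}(Q)}$ in which the same cube $R$ can recur for several (conceivably infinitely many) $Q\supseteq R$; splitting the underlying \emph{set} of cubes into finitely many sparse families does not by itself control this multiplicity, so the step ``$\ldots$ one obtains the pointwise domination $\ldots$'' as written has a gap. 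The cure is exactly to use the single-family augmentation lemma (as the paper does), after which the rest of your plan goes through. Everything else --- the $\text{avg}_Qf\le\text{avg}_R(\mathcal{A}_{\mathcal{S}}f)$ trick for $R\subseteq Q$, the two applications of Theorem~\ref{thma}, and the weight duality for the adjoint --- is sound and matches the paper.
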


We also show the  equivalence of the BMO space with respect to the new class of weights.
\begin{defn}[Weighted $p-\rm BMO$ Space]~\\
Let $w$ be a non-negative, locally integrable function. We define for each $1\leq p<\infty$, the weighted $p-\rm BMO$ space ${\rm BMO}_{L^p(w)} (\mathbb{R_+})$ of bounded mean oscillation on $\mathbb{R_+}$ is the set of 
$f\in L^1_{loc} (\mathbb{R_+},wdx)$ such that
\begin{align*}
\|f\|_{{\rm BMO}_{L^p(w)}(\R_+)}:= \sup_{B\subset \mathbb R_+} \bigg(\frac{1}{w(B)}\int_B|f(t)-f_{B}|^p w(t)dt\bigg)^{\frac{1}{p}}<\infty,
\end{align*}
where
$f_{B}:= \frac{1}{\mu(B)} \int_B f(t)\, d\mu(t)$, and $B=(a,b).$
\end{defn}

Consider also the  space $\rm
 BMO_{0,s}(\R_+,w)$ (motivated by  \cite{f348cf10-eb78-3134-8834-84c7e5fe174a})
with the quantity defined by
\[
\|b\|_{\rm
 BMO_{0,s}(\R_+,w)}:=\underset{B}{\sup} \underset{c}{\inf}\inf\{t\geq 0:w(\{x\in B:|b(x)-c|>t\})\leq s\cdot w(B)\}.
\]
For $w\equiv 1$, we simply denote $\rm
 BMO_{0,s}(\R_+,w)$ by $\rm
 BMO_{0,s}(\R_+)$.

\begin{thm}\label{thm BMO equiv}
For all $w\in \widetilde{A}_{\infty,\lambda-\frac{1}{2}}$ and $0<s\leq\frac{1}{2}$, we have
\begin{align*}
    {\rm BMO}_{0,s} (\mathbb{R_+},w)&\simeq
    {\rm BMO}_{L^p(w)}(\R_+)
    \simeq{\rm BMO}_{\triangle_\lambda}(\R_+)
    \simeq\rm BMO(\R_+)
    \simeq
    {\rm BMO}_{L^p(dx)}(\R_+)\simeq\rm
 BMO_{0,s}(\R_+).
\end{align*}
    
\end{thm}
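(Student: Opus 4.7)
The plan is to split the six-term chain into three blocks and concatenate. The first block is the classical triple
${\rm BMO}(\R_+) \simeq {\rm BMO}_{L^p(dx)}(\R_+) \simeq {\rm BMO}_{0,s}(\R_+)$,
which is a direct consequence of the one-dimensional John--Nirenberg inequality (for the $L^p$ equivalence) together with a Chebyshev/Kolmogorov argument (for the median-oscillation formulation). The second block is the corresponding Bessel/weighted triple
${\rm BMO}_{\triangle_\lambda}(\R_+) \simeq {\rm BMO}_{L^p(w)}(\R_+) \simeq {\rm BMO}_{0,s}(\R_+,w)$.
The third is the bridge
${\rm BMO}_{\triangle_\lambda}(\R_+) \simeq {\rm BMO}(\R_+)$,
which links the two triples. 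Once these blocks are in place the six-term chain follows by transitivity.

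For the second block I would first establish a weighted John--Nirenberg inequality adapted to the Bessel setting: for any $w \in \widetilde{A}_{\infty,\lambda-1/2}$, $f \in {\rm BMO}_{\triangle_\lambda}(\R_+)$, interval $B \subset \R_+$, and $t > 0$,
\[
w\bigl(\{x \in B : |f(x) - f_B| > t\}\bigr) \leq C_1 \, e^{-c_2 t / \|f\|_{{\rm BMO}_{\triangle_\lambda}}} \, w(B),
\]
where $f_B$ is the $\mu$-average. The proof is a Calder\'on--Zygmund stopping-time argument on the dyadic system $\mathcal{D}$ for $(\R_+,|\cdot|,\mu)$ with selection performed against the $\mu$-average, and the resulting $\mu$-measure decay upgraded to $w$-measure decay through the reverse H\"older inequality for the class $\widetilde{A}_{\infty,\lambda-1/2}$. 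Integrating the exponential decay in $t$ gives ${\rm BMO}_{\triangle_\lambda} \hookrightarrow {\rm BMO}_{L^p(w)}$, and choosing $t = t(s)$ so that $C_1 e^{-c_2 t/\|f\|_{{\rm BMO}_{\triangle_\lambda}}} \leq s \leq 1/2$ gives ${\rm BMO}_{\triangle_\lambda} \hookrightarrow {\rm BMO}_{0,s}(\R_+,w)$. Conversely, ${\rm BMO}_{0,s}(\R_+,w) \hookrightarrow {\rm BMO}_{L^p(w)}$ follows from a Kolmogorov-type argument on the distribution function, and ${\rm BMO}_{L^p(w)} \hookrightarrow {\rm BMO}_{\triangle_\lambda}$ follows by H\"older's inequality: writing $\int_B |f-f_B|\,d\mu = \int_B |f-f_B| w^{1/p} \cdot x^{2\lambda} w^{-1/p}\, dx$ and applying H\"older, the second factor is controlled precisely by the dual quantity $\int_B t^{2\lambda p'} w^{-1/(p-1)} dt$ in the $\widetilde{A}_{p,\lambda-1/2}$ condition, which after rearrangement yields $\|f\|_{{\rm BMO}_{\triangle_\lambda}} \lesssim [w]^{1/p}_{\widetilde{A}_{p,\lambda-1/2}} \|f\|_{{\rm BMO}_{L^p(w)}}$.

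For the bridge I would use that $x^{2\lambda}$ is a classical $A_\infty$ weight on $(0,\infty)$, together with the Muckenhoupt--Wheeden-type equivalence stating that for any classical $A_\infty$ weight $\omega$, ${\rm BMO}$ coincides with its $\omega$-weighted variants in any of the usual formulations. Applied with $\omega(x) = x^{2\lambda}$ this gives
\[
\|f\|_{{\rm BMO}(\R_+)} \simeq \sup_B \inf_c \frac{1}{\mu(B)}\int_B |f-c|\, d\mu \simeq \|f\|_{{\rm BMO}_{\triangle_\lambda}(\R_+)}.
\]
The principal obstacle is the weighted John--Nirenberg step above: the class $\widetilde{A}_{p,\lambda-1/2}$ is hybrid in that mean oscillation is measured against $\mu$ while level sets are measured against $w\,dx$, so one has to verify that the stopping cubes in the Calder\'on--Zygmund decomposition satisfy both a $\mu$-packing estimate (from the BMO definition) and a $w$-doubling estimate (from the $\widetilde{A}_\infty$ reverse H\"older). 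The exponent $2\lambda p'$ carried by the dual factor of the $\widetilde{A}_{p,\lambda-1/2}$ condition is exactly what synchronizes these two measures and allows the stopping-time argument to close.
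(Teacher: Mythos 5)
Your proposal follows the same global architecture as the paper: the chain is built by linking the classical triple ${\rm BMO}\simeq{\rm BMO}_{L^p(dx)}\simeq{\rm BMO}_{0,s}(\R_+)$, the weighted triple ${\rm BMO}_{\triangle_\lambda}\simeq{\rm BMO}_{L^p(w)}\simeq{\rm BMO}_{0,s}(\R_+,w)$, and the bridge ${\rm BMO}_{\triangle_\lambda}\simeq{\rm BMO}(\R_+)$, and the key tools — a weighted John--Nirenberg/John--Str\"omberg decay estimate for the forward embedding, a H\"older step exploiting the $x^{2\lambda p'}$ dual factor of the $\widetilde{A}_{p,\lambda-\frac12}$ condition for the reverse embedding, and the $A_\infty$ property of $\mu$ for the bridge — are exactly those in the paper's Lemmas \ref{emb} and \ref{sec}. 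The one place where you depart is the route to the weighted John--Nirenberg inequality: you propose a direct Calder\'on--Zygmund stopping time on the dyadic grid of $(\R_+,|\cdot|,\mu)$ upgraded to the $w$-measure through a reverse H\"older step; the paper instead uses the already-available $\widetilde{A}_\infty$-type comparison from \cite[Corollary 6.6]{MR4737319}, namely $w(\{|b-b_B|>\gamma\}\cap B)/w(B)\lesssim (\mu(\{|b-b_B|>\gamma\}\cap B)/\mu(B))^\delta$, and then plugs in the ordinary $\mu$-John--Nirenberg. These are equivalent in substance, and yours is slightly more self-contained at the price of re-running the stopping-time argument.

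One small slip worth correcting: you attribute ${\rm BMO}_{0,s}(\R_+,w)\hookrightarrow{\rm BMO}_{L^p(w)}$ to a ``Kolmogorov-type argument on the distribution function.'' In fact this is the \emph{hard} direction: it needs the full John--Str\"omberg iteration for the doubling measure $w$ (the paper obtains it through the rearrangement and local-mean-oscillation machinery of Propositions \ref{p1}--\ref{p2}, giving the exponential tail \eqref{jsi}, and only then integrates). The Chebyshev/Kolmogorov computation is the \emph{other} direction, ${\rm BMO}_{L^p(w)}\hookrightarrow{\rm BMO}_{0,s}(\R_+,w)$, which the paper dispatches in two lines by choosing the threshold $\gamma=s^{-1/p}\|b\|_{{\rm BMO}_{L^p(w)}}$. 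With that label corrected, your outline closes the chain exactly as the paper does.
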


\subsubsection{Applications to Bessel Riesz Transform and its Commutator}

As a consequence, we obtain the quantitative weighted estimates for  the Bessel Riesz transform $R_\lambda$ (as they are dominated by the suitable class of sparse operators \cite{duong2021two}).
\begin{thm}
     Let $1<p<\infty$. Then for $w\in\widetilde{A}_{p,\lambda-\frac{1}{2}} $, one has \[
 \|R_\lambda\|_{L^p(\R_+,wdx)\to L^p(\R_+,wdx)} \lesssim [w]^{\max\{1,\frac{1}{p-1}\}}_{\widetilde{A}_{p,\lambda-\frac{1}{2}}}.
  \]  
\end{thm}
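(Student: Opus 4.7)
The plan is to deduce the theorem directly from the sparse domination of $R_\lambda$ (as developed in the Bessel setting in \cite{duong2021two}) together with the sparse bound obtained in Theorem \ref{thma}. The strategy is the standard ``sparse operator $\Rightarrow$ singular integral'' transfer principle, but calibrated to the new weight class $\widetilde{A}_{p,\lambda-\frac12}$ rather than the classical $A_p(\mu)$ class for which \cite{duong2021two} was designed.

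First, I would invoke the pointwise (or bilinear form) sparse domination already available for the Bessel Riesz transform on the space of homogeneous type $(\R_+,|\cdot|,\mu)$. Concretely, there exists a finite collection of dyadic systems $\mathcal{D}_1,\dots,\mathcal{D}_N$ and, for each $f$, sparse families $\mathcal{S}_j \subset \mathcal{D}_j$ (with sparsity constant $\Lambda\in(0,1)$ depending only on $\lambda$) such that
\[
|R_\lambda f(x)|\;\lesssim\; \sum_{j=1}^{N}\mathcal{A}_{\mathcal{S}_j} f(x)\qquad \text{for a.e. } x\in\R_+,
\]
with the sparse operators defined via $\mu$-averages exactly as in the Definition preceding Theorem \ref{thma}. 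This is precisely the content of the sparse domination result in \cite{duong2021two}, which uses only that $R_\lambda$ is a Calder\'on--Zygmund operator on $(\R_+,|\cdot|,\mu)$; it is independent of the weight class one eventually plugs in.

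Second, once the pointwise sparse bound is in hand, the theorem is immediate: taking $L^p(\R_+,w\,dx)$-norms on both sides, applying the triangle inequality, and invoking Theorem \ref{thma} with the weight $w\in\widetilde{A}_{p,\lambda-\frac12}$ gives
\[
\|R_\lambda f\|_{L^p(\R_+,w\,dx)}
\;\lesssim\; \sum_{j=1}^{N}\|\mathcal{A}_{\mathcal{S}_j} f\|_{L^p(\R_+,w\,dx)}
\;\lesssim\; [w]^{\max\{1,\frac{1}{p-1}\}}_{\widetilde{A}_{p,\lambda-\frac12}}\|f\|_{L^p(\R_+,w\,dx)},
\]
since the implicit constants in the sparse bound from \cite{duong2021two} do not depend on the sparse family chosen, and Theorem \ref{thma} is uniform in $\mathcal{S}$.

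The only point that requires care—and which I would flag as the main (though mild) obstacle—is that the sparse domination in \cite{duong2021two} is stated in the $A_p(\mu)$ framework, so I must be explicit that the pointwise/bilinear sparse bound itself is a statement \emph{about the operator}, not about any particular weight class; the averages appearing in $\mathcal{A}_{\mathcal{S}}$ are $\mu$-averages on both sides. Consequently the same pointwise sparse bound can be integrated against $w\,dx$ instead of $w\,d\mu$, and Theorem \ref{thma} handles the weighted $L^p(\R_+,w\,dx)$ estimate for these $\mu$-average sparse operators directly. This observation—that sparse domination is weight-agnostic while the $L^p(w)$ estimate is weight-sensitive—is what allows us to transfer the quantitative bound from $\widetilde{A}_{p,\lambda-\frac12}$ (our new class) rather than from $A_p(\mu)$, and gives the desired $[w]^{\max\{1,1/(p-1)\}}$ dependence.
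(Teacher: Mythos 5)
Your proposal is correct and follows exactly the paper's approach: the paper itself treats this theorem as an immediate corollary, stating only that $R_\lambda$ is dominated by the $\mu$-averaged sparse operators by \cite{duong2021two}, so the bound follows from Theorem~\ref{thma}. Your explicit remark that the sparse domination is a weight-agnostic statement about the operator (so it transfers cleanly from the $A_p(\mu)$ context of \cite{duong2021two} to the new class $\widetilde{A}_{p,\lambda-\frac12}$) is the only subtlety worth flagging, and you handle it correctly.
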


Since the commutator $[b,R_\lambda]$ is dominated by a combination of $\mathcal{A}_{\mathcal{S},b}$ and $\mathcal{A}^*_{\mathcal{S},b}$, we obtain the weighted boundedness and compactness characterisation of $[b,R_\lambda]$, following the recent approach in \cite{chen2022compactness}. Here we only highlight the weighted compactness. We also make a remark here that the boundedness of commutators has a long history and close connections to other problems. For instance, one may look at the papers \cite{MR3231215}, \cite{MR3255002} and the references contained therein.  

\begin{thm}\label{thmc}
Suppose $b\in {\rm BMO}_{\triangle_\lambda} (\mathbb{R_+})$, $1<p<\infty$ and $w\in \widetilde{A}_{p,\lambda-\frac{1}{2}}$. Then the following statements hold.\\
(1) If $b\in {\rm VMO}_{\triangle_\lambda} (\mathbb{R_+})$, then  
$[b,R_\lambda]$ is compact from $L^p(\R_+,wdx)$ to $L^p(\R_+,wdx)$;\\
(2) If $[b,R_\lambda]$
is compact from $L^p(\R_+,wdx)$ to $L^p(\R_+,wdx)$, then $b\in {\rm VMO}_{\triangle_\lambda} (\mathbb{R_+})$.
\end{thm}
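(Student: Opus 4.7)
\textbf{Proof proposal for Theorem \ref{thmc}.}

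The plan for part (1) is a standard approximation-and-compactness-of-the-limit argument, driven by the quantitative bounds of Theorem \ref{thmb}. Since $b\in {\rm VMO}_{\triangle_\lambda}(\R_+)$, I will first fix a sequence $\{b_n\}$ of sufficiently regular, compactly supported (and bounded away from the origin) functions on $\R_+$ with $\|b-b_n\|_{{\rm BMO}_{\triangle_\lambda}}\to 0$; the existence of such a sequence is the defining feature of VMO. Using the pointwise sparse domination of $[b,R_\lambda]$ by $\mathcal{A}_{\mathcal{S},b}$ and $\mathcal{A}^*_{\mathcal{S},b}$ from \cite{duong2021two}, together with Theorem \ref{thmb}, I obtain
\[
\|[b-b_n, R_\lambda]\|_{L^p(\R_+, wdx)\to L^p(\R_+, wdx)}\lesssim \|b-b_n\|_{{\rm BMO}_{\triangle_\lambda}}\,[w]^{2\max\{1,1/(p-1)\}}_{\widetilde{A}_{p,\lambda-1/2}}\longrightarrow 0.
\]
Since the compact operators form a closed subspace in the operator-norm topology, it suffices to show each $[b_n,R_\lambda]$ is compact on $L^p(\R_+, wdx)$. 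For this I will use a weighted Fréchet–Kolmogorov–Riesz criterion adapted to $(\R_+, |\cdot|, d\mu)$: verify uniform $L^p_w$-boundedness, equicontinuity under the Bessel translation $\tlz$, and uniform negligibility of the mass near $0$ and near $\infty$ for the image of the unit ball. Each of these is reduced, via the smoothness and support of $b_n$, to controlled off-diagonal kernel estimates of $R_\lambda$ combined with the weight class $\widetilde{A}_{p,\lambda-1/2}$.

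The plan for part (2) is the contrapositive Uchiyama-type construction as in \cite{chen2022compactness}. If $b\notin {\rm VMO}_{\triangle_\lambda}(\R_+)$, one of the three boundary conditions in the definition of VMO fails, producing $\epsilon_0>0$ and a sequence of intervals $B_k\subset\R_+$ along which the mean oscillation stays bounded below by $\epsilon_0$, with either $|B_k|\to 0$, or $|B_k|\to\infty$, or the quantity $d(B_k,\{0\})/|B_k|\to\infty$. In each regime I will construct unit-norm test functions $f_k\in L^p(\R_+,wdx)$ supported on intervals geometrically separated from $B_k$, and exploit the pointwise size/sign behavior of the Bessel Riesz kernel on these off-diagonal regions to force $\|[b,R_\lambda]f_k\|_{L^p(\R_+,wdx)}\gtrsim \epsilon_0$. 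By arranging the $f_k$ (after passing to a subsequence and possibly splitting into finitely many subsequences indexed by the three regimes) so that the images $[b,R_\lambda]f_k$ are essentially disjointly supported, I will contradict the existence of a norm-convergent subsequence, hence contradict compactness.

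The hardest step will be the fine kernel analysis underpinning both directions in the weighted setting. In part (1), the Fréchet–Kolmogorov criterion must be executed with the Bessel translation rather than ordinary Euclidean translation, and the equicontinuity estimate for $R_\lambda$ applied to functions in a $w$-weighted ball demands sharp off-diagonal bounds for the kernel of $R_\lambda$ interacting with the new weight class $\widetilde{A}_{p,\lambda-1/2}$, not with $A_p(\mu)$ for which such estimates are well documented. In part (2), the Uchiyama lower bound must be verified separately in each of the three regimes, and transplanting the computation from the Lebesgue or $A_{p,\lambda}$ framework to $\widetilde{A}_{p,\lambda-1/2}$ is genuinely new: one needs robust lower and upper control of the $w$-measure of the companion intervals, which is handled via the Proposition \ref{Apmu Aplambda}-type structure of $\widetilde{A}_{p,\lambda-1/2}$ and the equivalence of BMO spaces from Theorem \ref{thm BMO equiv}.
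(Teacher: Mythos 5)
Your proposal is correct in outline but follows a genuinely different route from the paper, on both halves of the theorem.

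For part (1), you go through the classical Uchiyama pipeline: approximate $b\in {\rm VMO}_{\triangle_\lambda}$ in ${\rm BMO}_{\triangle_\lambda}$-norm by smooth, compactly supported $b_n$, use Theorem \ref{thmb} to get operator-norm convergence $[b_n,R_\lambda]\to [b,R_\lambda]$, and then prove each $[b_n,R_\lambda]$ compact via a weighted Fréchet--Kolmogorov criterion with Bessel translation. The paper bypasses all of this: it decomposes $\mathcal{A}^*_{\mathcal{S},b}$ directly by splitting the sparse family into four classes of cubes (those disjoint from a fixed large cube $Q_A$, those containing $Q_A$, those inside $Q_A$ with small sidelength, and a remaining finite collection), shows the first three pieces are pointwise dominated by $\varepsilon\,\mathcal{A}_{\mathcal{S}}(\mathcal{A}^*_{\tilde{\mathcal{S}}}|f|)$ using the three VMO conditions, observes the fourth piece has finite rank, and concludes $\mathcal{A}^*_{\mathcal{S},b}$ (and then $\mathcal{A}_{\mathcal{S},b}$ by duality) is a norm-limit of compact operators. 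This sidesteps the two nontrivial gaps in your plan: (a) the identification of ${\rm VMO}_{\triangle_\lambda}$ (defined here by the three limit conditions, one of them Bessel-specific) with the BMO-closure of nice functions is not provided anywhere in the paper and would need its own argument; (b) a Fréchet--Kolmogorov criterion with Bessel translation against $\widetilde{A}_{p,\lambda-1/2}$ weights is an entirely new piece of analysis that you flag as "the hardest step'' but do not reduce to anything in the paper. The paper's sparse decomposition is the cleaner path here.

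For part (2), the first half of your plan matches the paper: extract intervals $B_k$ along which the mean oscillation stays $\geq\delta_0$, use the lower kernel bound on companion intervals $\widetilde B_k$ together with a median-value argument to produce disjointly supported test functions $f_k$ with $\|f_k\|_{L^p(w)}\sim 1$ and $\|[b,R_\lambda]f_k\|_{L^p(w)}\gtrsim\delta_0$. Where you diverge is the closing step. You propose to arrange the images $[b,R_\lambda]f_k$ to be "essentially disjointly supported'' and contradict the existence of a Cauchy subsequence directly. That is not automatic: the Riesz kernel has long tails, so $[b,R_\lambda]f_j$ does not vanish on the support of $[b,R_\lambda]f_k$, and in the weighted setting one would have to control the cross terms, which is precisely the technical obstacle the paper avoids. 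The paper instead uses the observation from \cite{LACEY2022125869}: if the operator were compact, pick $\phi$ in the norm-closure of $\{[b,R_\lambda]f_j\}$, pick $j_i$ with $\|\phi-[b,R_\lambda]f_{j_i}\|_{L^p(w)}\leq 2^{-i}$, take a nonnegative sequence $\{a_i\}\in\ell^{p'}\setminus\ell^1$, and observe $\psi=\sum a_i f_{j_i}\in L^p(w)$ (using disjoint supports of the $f_{j_i}$) while $\sum a_i\phi\notin L^p(w)$ — a contradiction. This only requires disjointness of the inputs, not the outputs. Your route, if you insist on it, requires a nontrivial extra lemma; the paper's route is the more economical one.

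In summary: both halves of your proposal are morally reasonable, but neither coincides with the paper's argument, and in each case the paper's choice removes precisely the hardest technical step you would otherwise have to carry out.
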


With a full grasp of the $\rm BMO$ structure, the third main result is the following two theorems about the weighted endpoint characterization of $[b, R_\lambda]$.

\begin{thm}
    Let $b\in \rm BMO_{\triangle_\lambda}(\R_+)$ and $\phi$ be a Young function that satisfies
\[
C_\phi :=\int^\infty_1 \frac{\phi^{-1}(t)}{t^{2}\log(e+t)}dt<\infty.
\]
Then for all $t>0$ and a non-negative, locally integrable weight $w$, 
\[
 w(\{x\in\R_+:|[b,R_\lambda]f(x)|>t\})\lesssim C_\phi  \int_{\R_+} \Phi\left(2\|b\|_{\rm BMO_{\triangle_\lambda}(\R_+)}\cdot\frac{|f(x)|}{t}\right)M_{\Phi\circ\phi}\Big(\frac{w}{\mu}\Big)(x)d\mu(x),
\]
where $\Phi(x):=x\log(e+x).$
\end{thm}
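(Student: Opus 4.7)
The plan is to reduce the weak-type endpoint bound for the commutator to an analogous inequality for the sparse operators $\mathcal{A}_{\mathcal{S},b}$ and $\mathcal{A}^*_{\mathcal{S},b}$ of the preceding sections by means of a pointwise sparse domination
\[
|[b,R_\lambda]f(x)| \lesssim \mathcal{A}_{\mathcal{S},b}|f|(x) + \mathcal{A}^*_{\mathcal{S},b}|f|(x),
\]
which is the Bessel analogue of the standard Lerner-type decomposition and follows from the dyadic analysis of the preceding sections together with the strategy of \cite{chen2022compactness}. After this reduction and normalizing $\|b\|_{\rm BMO_{\triangle_\lambda}(\R_+)}=1$ and $t=1$ by scaling, it suffices to bound $w(\{\mathcal{A}_{\mathcal{S},b}f>1\})$ by $C_\phi\int \Phi(2|f|)\,M_{\Phi\circ\phi}(w/\mu)\,d\mu$, and similarly for $\mathcal{A}^*_{\mathcal{S},b}$.

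The main argument is a Calder\'on--Zygmund decomposition of $\Phi(|f|)$ at level $1$ with respect to the Bessel measure $\mu$, producing a disjoint stopping family $\{Q_j\}\subset\mathcal{D}$, a decomposition $f=g+h$ with $h=\sum_j b_j$ supported on $\bigcup_j Q_j$, and an exceptional set $E=\bigcup_j 3Q_j$. The weight of $E$ is controlled directly from the definition of the Orlicz maximal operator via
\[
w(E)\leq\sum_j w(3Q_j) \lesssim \sum_j \mu(Q_j)\,\inf_{Q_j} M_{\Phi\circ\phi}(w/\mu) \lesssim \int\Phi(|f|)\,M_{\Phi\circ\phi}(w/\mu)\,d\mu.
\]
On the complement of $E$ one splits $\mathcal{A}_{\mathcal{S},b}f=\mathcal{A}_{\mathcal{S},b}g+\mathcal{A}_{\mathcal{S},b}h$; the good part is handled by Chebyshev together with an $L^2(d\mu)$ estimate of the sparse operator, and the bad part is handled through the Bessel BMO--$L\log L$ duality
\[
\frac{1}{\mu(Q)}\int_Q |b-b_Q||\varphi|\,d\mu \lesssim \|b\|_{\rm BMO_{\triangle_\lambda}(\R_+)}\,\|\varphi\|_{\Phi,Q,\mu},
\]
a consequence of John--Nirenberg for $\rm BMO_{\triangle_\lambda}$ (which reduces to the classical statement by Theorem \ref{thm BMO equiv}), combined with the generalized H\"older inequality in Orlicz spaces. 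The maximal operator $M_{\Phi\circ\phi}(w/\mu)$ on the right-hand side emerges from a second application of generalized H\"older with the complementary Young pair $(\phi,\bar\phi)$, after invoking the standard Young function identity $\Phi^{-1}(t)(\overline{\Phi\circ\phi})^{-1}(t)\lesssim\phi^{-1}(t)$. The hypothesis $C_\phi<\infty$ is then exactly what is needed to sum the contributions across successive generations of the sparse family, in the spirit of the classical P\'erez--Pradolini endpoint estimate.

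The main difficulty will be the two-measure bookkeeping: the weight $w$ is integrated against $dx$ while every average in the sparse decomposition is with respect to $d\mu$, which is precisely why $M_{\Phi\circ\phi}$ acts on the Radon--Nikodym density $w/\mu$ and is integrated back against $d\mu$. Each H\"older step and each cube-to-cube comparison must respect this split; in particular, the covering and stopping-time arguments underlying the Calder\'on--Zygmund decomposition and the sparse domination have to be carried out relative to $\widetilde A_{\infty,\lambda-1/2}$ rather than $A_\infty(\mu)$, whose good behavior is exactly the content of Proposition \ref{Apmu Aplambda} and Theorem \ref{thm BMO equiv}. Once these Bessel-adapted ingredients are in place, the remaining calculation follows the established Orlicz-Young-function template for endpoint commutator bounds.
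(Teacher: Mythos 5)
Your proposal takes a genuinely different route from the paper. The paper does \emph{not} perform a Calder\'on--Zygmund decomposition of $\Phi(|f|)$ with a good/bad split. Instead, after the sparse domination $|[b,R_\lambda]f|\lesssim \mathcal A_{\mathcal S,b}|f|+\mathcal A^*_{\mathcal S,b}|f|$, the paper works \emph{directly} on the sparse sum: it partitions the sparse family $\mathcal S$ into level sets $S_k=\{Q:4^{-k-1}<\|f\|_{\psi,Q}\le 4^{-k}\}$, peels each $S_k$ into generations $\mathcal S_{k,v}$, and proves a ``layer lemma'' (Lemma~\ref{keylem}) that trades $w(E\cap Q)$ against $M_\phi(w/\mu)$ using generalized H\"older on each $E_Q$ and a telescoping count $2^k w(E)$ for the top $u=2^k$ generations. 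The commutator piece $\mathcal A^*_{\mathcal S,b}$ is then reduced to $\mathcal A_{\mathcal S,L\log L}$ via BMO duality, while $\mathcal A_{\mathcal S,b}$ is handled separately by splitting each $Q$ by the size of $|b-b_Q|$ into $F_k$. The constant $C_\phi$ emerges from summing $\sum_k (3/2)^k/\overline\phi^{-1}(8^{2^k})\lesssim\int_1^\infty\phi^{-1}(t)t^{-2}\,dt$ and the $\phi\mapsto\Phi\circ\phi$ replacement Lemma~\ref{change}; it is not a generation-to-generation normalizing factor in a CZ tree as you describe. So the two approaches really differ: yours is the P\'erez-style CZ good/bad split, the paper's is the Lerner/Domingo-Salazar--Lacey--Rey level-set layering of the sparse family.

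That said, your outline as written has a concrete gap that would need to be fixed before it could serve as a proof. You propose to handle the good part by ``Chebyshev together with an $L^2(d\mu)$ estimate of the sparse operator.'' But the target measure is $w\,dx$ for an \emph{arbitrary} nonnegative locally integrable $w$ (no $\widetilde A_p$ assumption is made in this theorem), while an $L^2(d\mu)$ bound on $\mathcal A_{\mathcal S,b}$ controls nothing about $\int|\mathcal A_{\mathcal S,b}g|^2w\,dx$. The standard remedy is a Fefferman--Stein-type two-weight $L^2$ inequality $\int|\mathcal A_{\mathcal S,b}g|^2 w\,dx\lesssim\int|g|^2\,M_{\Phi\circ\phi}(w/\mu)\,d\mu$ in the Bessel setting, but you neither state nor prove such an inequality, and it is not elementary: it requires the same $M_{\Phi\circ\phi}(w/\mu)$ to appear on the right, which is precisely the structure you must produce, not assume. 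Second, you frame the stopping-time and covering parts of the argument as needing ``$\widetilde A_{\infty,\lambda-1/2}$ rather than $A_\infty(\mu)$,'' but this theorem has no weight hypothesis; the CZ decomposition, the sparse domination, and the generalized H\"older are all carried out purely in $(\R_+,|\cdot|,d\mu)$, and the only place the weight enters is through $M_{\Phi\circ\phi}(w/\mu)$ via duality. Bringing $\widetilde A_\infty$ in here would actually weaken the theorem. Finally, you treat $\mathcal A_{\mathcal S,b}$ and $\mathcal A^*_{\mathcal S,b}$ symmetrically, but the two are not symmetric under an arbitrary $w$: in $\mathcal A^*_{\mathcal S,b}$ the oscillation $|b-b_Q|$ multiplies $f$ and yields an $L\log L$ norm of $f$ (giving $\Phi(|f|/t)$), whereas in $\mathcal A_{\mathcal S,b}$ the oscillation sits outside and must be paired against the weight. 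If you want to pursue the CZ route you would have to split the two cases accordingly; with those three repairs the CZ approach should also close, though at some cost in precision of the Young-function bookkeeping.
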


 We then have the following weighted endpoint characterization with respect to $\widetilde{A}_{1,\lambda-\frac{1}{2}}$, where the unweighted prototype is obtained in \cite{perez1995endpoint} and \cite{accoms}.
\begin{thm}\label{thmiff}
Suppose $w\in \widetilde{A}_{1,\lambda-\frac{1}{2}}$. If $b\in \rm BMO_{\triangle_\lambda}(\R_+)$, then for all $t>0$
 \begin{align*}
 w(\{x\in\R_+:|[b,R_\lambda]f(x)|>t\})\underset{w,\Phi}{\lesssim} \int_{\R_+} \Phi\left(\|b\|_{\rm BMO_{\triangle_\lambda}(\R_+)}\frac{|f(x)|}{t}\right)w(x)dx,
 \end{align*}
where $\Phi(x):=x\log (x+e).$
Conversely, if there is some $C>0$ such that 
\begin{align*}
 w(\{x\in\R_+:|[b,R_\lambda]f(x)|>t\})\underset{w,\Phi}{\lesssim} \int_{\R_+} \Phi\left(C\frac{|f(x)|}{t}\right)w(x)dx,
 \end{align*}
 then
 $b\in \rm BMO_{\triangle_\lambda}(\R_+)$.
\end{thm}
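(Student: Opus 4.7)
The two implications will be handled separately. The sufficiency comes from specialising the preceding theorem to a suitable Young function and self-improving $\widetilde{A}_{1,\lambda-\frac{1}{2}}$. The necessity is a testing argument that extracts a median-type BMO bound from the weak-type hypothesis and then invokes the BMO equivalence of Theorem \ref{thm BMO equiv}.

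For the sufficiency, I would apply the preceding theorem with $\phi(t)=t\log^{1+\varepsilon}(e+t)$ for any $\varepsilon>0$. A direct check gives $C_\phi<\infty$, and $\Phi\circ\phi(t)\simeq t\log^{2+\varepsilon}(e+t)$, so $M_{\Phi\circ\phi}$ is pointwise bounded by a finite iterate $(M^\mu)^{k}$ of the Hardy--Littlewood maximal function with respect to $\mu$. Using $\nu_{\lambda-\frac{1}{2}}(x)=x^{2\lambda}$ and Definition \ref{tilde Ap}, the hypothesis $w\in\widetilde{A}_{1,\lambda-\frac{1}{2}}$ unwinds to the statement that $g:=w/\mu$, the density of $w\,dx$ with respect to $d\mu$, is a Muckenhoupt $A_1$-weight on the space of homogeneous type $(\R_+,|\cdot|,\mu)$. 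This gives $M^\mu g\lesssim g$ a.e., and iterating yields $M_{\Phi\circ\phi}(g)\lesssim g$ pointwise. Substituting back, the $d\mu$-integral on the right-hand side of the preceding theorem collapses to $\int_{\R_+}\Phi(C\|b\|_{{\rm BMO}_{\triangle_\lambda}}|f|/t)\,w\,dx$, which is the claimed endpoint estimate after absorbing the constant $C$ into $\|b\|_{{\rm BMO}_{\triangle_\lambda}}$.

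For the necessity, by Theorem \ref{thm BMO equiv} it suffices to show $b\in {\rm BMO}_{0,s}(\R_+,w)$ for some fixed $0<s\leq\tfrac{1}{2}$. I would fix an interval $B\subset\R_+$, let $c_B$ denote a $w$-median of $b$ on $B$, and test the hypothesis against $f=\chi_E$ with $E\subset B$ a superlevel or sublevel set of $b-c_B$ (so that $b-c_B$ has a definite sign on $E$). The test set is constructed so that on a companion subset $B'\subset B$ of $\mu$-measure comparable to $\mu(B)$, the Bessel Riesz transform satisfies $|R_\lambda\chi_E(x)|\gtrsim 1$ with constant sign, and hence $|[b,R_\lambda]\chi_E(x)|\gtrsim |b(x)-c_B|$ on $B'$. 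Applying the weak-type hypothesis at the threshold $t=\eta\,\|b\|_{{\rm BMO}_{\triangle_\lambda}}$ then gives $w(\{x\in B':|b(x)-c_B|>\eta\})\lesssim \Phi(C/\eta)\,w(E)\leq \Phi(C/\eta)\,w(B)$, which is at most $s\,w(B)$ once $\eta$ is taken large enough. This is precisely the ${\rm BMO}_{0,s}(\R_+,w)$ condition, and Theorem \ref{thm BMO equiv} upgrades it to $b\in {\rm BMO}_{\triangle_\lambda}(\R_+)$.

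The main obstacle is the converse, and specifically the construction of $E$ and the pointwise lower bound $|R_\lambda\chi_E|\gtrsim 1$ of constant sign on a large portion of $B$. In the Euclidean Hilbert-transform case, the antisymmetric kernel $(x-y)^{-1}$ makes taking $E$ as an adjacent interval of comparable length automatic. In the Bessel setting $R_\lambda$ is defined only on $\R_+$ and its kernel is quasi-antisymmetric with explicit dependence on the distance of $B$ from the origin, so the argument must split into intervals far from $0$ on their own scale (where the Bessel kernel is a perturbation of the Hilbert kernel) and intervals close to $0$ (where the one-sided geometry dictates that $E$ be chosen on a specific side). The required pointwise kernel bounds are available from the explicit expression for $R_\lambda$ used elsewhere in the paper, but assembling them into a single lower bound uniform in $B\subset\R_+$ is the principal technical step.
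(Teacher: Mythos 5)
Your forward direction is essentially the paper's argument with a slight variant: the paper also applies the preceding $M_{\Phi\circ\phi}$-weighted estimate, chooses $\phi(t)=t\log^{\varepsilon}(e+t)$, uses $\Phi\circ\phi(t)\lesssim\phi(t)\log(e+t)$ to land in $L(\log L)^{1+\varepsilon}$, then bounds by $M_{L^{1+(1+\varepsilon)\alpha}}$ and invokes the reverse H\"older property of the $A_1(\mu)$-weight $w/\mu$ before using $A_1$ itself. Your route via $M_{\Phi\circ\phi}\lesssim (M^\mu)^k$ and iterating $M^\mu g\lesssim g$ is cleaner, equivalent in strength, and rests on the same reinterpretation that $w\in\widetilde A_{1,\lambda-\frac12}$ is exactly $w/\mu\in A_1(\mu)$. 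That direction is fine.

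The converse has a genuine gap in the geometry, even though the architecture (weak-type testing, kernel lower bound, median level sets, ${\rm BMO}_{0,s}(\R_+,w)$, Theorem~\ref{thm BMO equiv}) is the right one. You test against $f=\chi_E$ with $E\subset B$ and ask for $|R_\lambda\chi_E(x)|\gtrsim 1$ on a large portion $B'\subset B$. This cannot work as stated: when $E$ and $B'$ both sit inside $B$, the kernel $K(x,y)$ has its diagonal singularity and sign change right in the region of integration, and $R_\lambda\chi_E$ generically changes sign and vanishes somewhere in $B$. The only uniform lower bound available (the one from [duong2021two, \S7.5], cited in the paper) is $|K(x,y)|\gtrsim 1/\mu(\widetilde B)$ with \emph{constant sign} for $(x,y)\in B\times\widetilde B$, where $\widetilde B=B(y_0,r)$ is a \emph{disjoint} companion interval at distance $\sim r$ from $B$. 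Accordingly, in the paper the test function is $\chi_{F}$ with $F$ a median-level subset of $\widetilde B$ (not of $B$), and the reference constant is the $\mu$-median $\alpha_b(\widetilde B)$ of $b$ on $\widetilde B$ (not a $w$-median on $B$): this choice makes $\mu(F_\pm)\geq\tfrac12\mu(\widetilde B)$ and simultaneously forces $b(x)-b(y)$ to have constant sign for $x$ in the correspondingly chosen half $E_\pm\subset B$ and $y\in F_\mp\subset\widetilde B$. That sign constancy is what lets one pass from $\int_{F_\mp}|b(x)-b(y)||K(x,y)|\,d\mu(y)$ to $|[b,R_\lambda]\chi_{F_\mp}(x)|$ and then invoke the weak-type hypothesis at a threshold $A$; two applications (one for $E_+$, one for $E_-$) and a large choice of $A$ give $w(\{x\in B:|b(x)-\alpha_b(\widetilde B)|>A\})\leq\frac12 w(B)$, i.e.\ $b\in{\rm BMO}_{0,\frac12}(\R_+,w)$. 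You flag the kernel lower bound as the technical step and even mention choosing $E$ as an adjacent interval at the end, which contradicts the $E\subset B$ used earlier; the reconciliation is to put the test set in $\widetilde B$ and the median on $\widetilde B$, and then your outline becomes the paper's proof.
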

Finally, we present the fact that the $L\log L$-scale is the most reasonable scale when concerning  the endpoint estimate.
\begin{thm}\label{thmcounter}
There is $b_0$ in ${\rm BMO}_{\Delta_\lambda}(\R_+)$ such that    $[b_0,R_\lambda]$ is not of weak type $(1,1)$.
\end{thm}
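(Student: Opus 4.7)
The plan is to adapt the classical P\'erez-style counterexample for commutators of Calder\'on--Zygmund operators to the Bessel setting. First, I take $b_0(x):=\log x$. Since $\log x \in {\rm BMO}(\R_+)$ is classical and Theorem \ref{thm BMO equiv} gives ${\rm BMO}_{\triangle_\lambda}(\R_+)\simeq {\rm BMO}(\R_+)$, we have $b_0\in {\rm BMO}_{\triangle_\lambda}(\R_+)$. It therefore suffices to exhibit a single $f\in L^1(\R_+,dx)$ for which $[b_0,R_\lambda]f\notin L^{1,\infty}(\R_+,dx)$.

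Take $f:=\chi_{[1,2]}$ and write
\begin{equation*}
[b_0,R_\lambda]f(x)=\int_1^2 \bigl(\log x-\log y\bigr)\, R_\lambda(x,y)\, d\mu(y),
\end{equation*}
where $R_\lambda(x,y)$ denotes the kernel of the Bessel Riesz transform. Using the explicit kernel representation (via Bessel functions or via the Poisson semigroup, as available in the references cited in the paper), one establishes the far-field asymptotic $R_\lambda\chi_{[1,2]}(x)\sim c/x$ with a nonzero constant $c$ as $x\to\infty$; this is the Bessel analogue of $H\chi_{[1,2]}(x)=\tfrac{1}{\pi}\log|(x-1)/(x-2)|\sim 1/(\pi x)$ for the Hilbert transform. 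Since $\log x-\log y\sim \log x$ uniformly in $y\in[1,2]$, this gives $|[b_0,R_\lambda]f(x)|\gtrsim \log x/x$ for all $x$ sufficiently large.

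Finally, a direct distribution-function computation yields $|\{x>0:\log x/x>t\}|\asymp \log(1/t)/t$ as $t\to 0^+$, which grows strictly faster than $C/t$ for every constant $C$. Since $\|f\|_{L^1(dx)}=1$, this contradicts any weak-type $(1,1)$ inequality for $[b_0,R_\lambda]$, proving the theorem.

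The main technical obstacle is confirming the leading-order far-field asymptotic $R_\lambda\chi_{[1,2]}(x)\sim c/x$ with nonvanishing constant $c$. This is where the specific structure of the Bessel Riesz transform (as opposed to an abstract Calder\'on--Zygmund operator on the space of homogeneous type $(\R_+,|\cdot|,\mu)$, whose generic size bound would only give $1/x^{2\lambda+1}$) plays the decisive role: the derivative structure $R_\lambda=\frac{d}{dx}\triangle_\lambda^{-1/2}$ yields a kernel whose \emph{size against} $dy$ mimics the Hilbert kernel $1/(x-y)$ in the regime $x\gg y$. Once this asymptotic is checked from the integral representation of $R_\lambda(x,y)$, the remaining level-set estimate is elementary.
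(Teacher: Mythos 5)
There is a genuine gap in your central asymptotic claim. From the explicit formula (which the paper records)
\[
R_\lambda(x,y)=-\frac{2\lambda}{\pi}\int_0^{\pi}\frac{(x-y\cos\theta)(\sin\theta)^{2\lambda-1}}{(x^2+y^2-2xy\cos\theta)^{\lambda+1}}\,d\theta,
\]
for $x\gg y$ the denominator is $\approx x^{2\lambda+2}$ and the numerator is $\approx x$, so $R_\lambda(x,y)\sim -c_\lambda/x^{2\lambda+1}$, \emph{not} $1/x$. Your heuristic that the ``derivative structure makes the kernel against $dy$ look like $1/(x-y)$'' is valid only near the diagonal $x\sim y$; for $x\gg y$ the kernel against $dy$ behaves like $y^{2\lambda}/x^{2\lambda+1}$, which is very different from $1/x$. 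Hence $R_\lambda\chi_{[1,2]}(x)\sim c'/x^{2\lambda+1}$ and $|[b_0,R_\lambda]\chi_{[1,2]}(x)|\sim c''\log x/x^{2\lambda+1}$ at infinity. (A sanity check: if the far field were $\sim 1/x$, then $R_\lambda\chi_{[1,2]}$ itself would fail to lie in $L^{1,\infty}(\mu)$, contradicting the weak $(1,1)$ theory for Calder\'on--Zygmund operators on the homogeneous space $(\R_+,|\cdot|,\mu)$.)

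Because of this, your level-set computation with Lebesgue measure does not produce a counterexample: with the correct tail $g(x)=\log x/x^{2\lambda+1}$ one has $t\cdot|\{g>t\}|\approx t\cdot g^{-1}(t)=\log(g^{-1}(t))/(g^{-1}(t))^{2\lambda}\to 0$ as $t\to 0^+$ for $\lambda>0$. What the paper actually disproves is the weak $(1,1)$ inequality $\mu(\{|[b_0,R_\lambda]f|>t\})\lesssim t^{-1}\int|f|\,d\mu$ (this is Theorem~\ref{thmiff} with $w=\mu\in\widetilde{A}_{1,\lambda-\frac12}$ and $\Phi$ replaced by the identity), and the computation is against $d\mu$: then $t\cdot\mu(\{g>t\})\sim t\,(g^{-1}(t))^{2\lambda+1}=\log(g^{-1}(t))\to\infty$, which does blow up. Your choice $f=\chi_{[1,2]}$ and $b_0=\log x$ is fine and would carry through once you (i) replace the erroneous $1/x$ asymptotic by $1/x^{2\lambda+1}$ and (ii) run the distribution-function estimate against $\mu$ rather than Lebesgue measure; the paper's own proof does exactly this (with $b_0=\log x^{2\lambda}$ and $f$ an approximate Dirac mass near the origin, but the mechanism is the same).
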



\section{Preliminaries}

To begin with, we first recall the BMO and VMO spaces in \cite{ duong2017factorization}.
\subsection{BMO and VMO Space}
\begin{defn}[The space ${\rm BMO}_{\triangle_\lambda} (\mathbb{R_+})$]\label{defbmo}~\\
The BMO space ${\rm BMO}_{\triangle_\lambda} (\mathbb{R_+})$ of $\lambda$-bounded mean oscillation on $\mathbb{R_+}$ is the set of 
$f\in L^1_{loc} (\mathbb{R_+},\mu)$ such that
\begin{align}\label{fB}
\|f\|_{{\rm BMO}_{\triangle_\lambda}}:= \sup_{B\subset \mathbb R_+} \frac{1}{\mu(B)}\int_B|f(t)-f_{B}|d\mu(t)<\infty,
\end{align}
where
$f_{B}:= \frac{1}{\mu(B)} \int_B f(t)\, d\mu(t)$, and $B=(a,b).$
\end{defn}

\begin{defn}[The space ${\rm VMO}_{\triangle_\lambda} (\mathbb{R_+})$]\label{defvmo}~\\
The VMO space ${\rm VMO}_{\triangle_\lambda} (\mathbb{R_+})$ of $\lambda$-vanishing mean oscillation on $\mathbb{R_+}$ is the set of 
$f\in {\rm BMO}_{\triangle_\lambda} (\mathbb{R_+})$ such that
\begin{align}\label{limit 1}
&\lim_{r\to0}\ \sup_{\substack{B=(a,b)\subset \mathbb R_+\\ b-a=r}} \frac{1}{\mu(B)}\int_B|f(t)-f_{B}|d\mu(t)=0,\\
&\lim_{r\to\infty}\ \sup_{\substack{B=(a,b)\subset \mathbb R_+\\ b-a=r}} \frac{1}{\mu(B)}\int_B|f(t)-f_{B}|d\mu(t)=0,\\
&\lim_{a\to\infty}\ \sup_{B=(a,b)\subset \mathbb R_+} \frac{1}{\mu(B)}\int_B|f(t)-f_{B}|d\mu(t)=0.
\end{align}
\end{defn}

Apart from this, we also introduce the notion of the dyadic grid in this section.
\subsection{A System of Dyadic Cubes}\label{sec:dyadic_cubes}
In the space of homogeneous type $(X,d,\mu)$, a
countable family
$$
    \mathcal{D}
:=\underset{k\in\mathbb{Z}}{\bigcup}{D}_k, \
    \mathcal{D}
    :=\{Q^k_\alpha\colon \alpha\in \mathbb{A}_k\},
$$
of Borel sets $Q^k_\alpha\subseteq X$ is called \textit{a
system of dyadic cubes with parameters} $\delta\in (0,1)$ and
$0<a_1\leq A_1<\infty$ if it has the following properties:
\begin{itemize}
    \item 

    $X
    = \bigcup_{\alpha\in \mathbb{A}_k} Q^k_{\alpha}
    \quad\text{(disjoint union) for all}~k\in\mathbb{Z}$;

    \item$\text{if }\ell\geq k\text{, then either }
Q^{\ell}_{\beta}\subseteq Q^k_{\alpha}\text{ or }
        Q^k_{\alpha}\cap Q^{\ell}_{\beta}=\emptyset$;
\item 
    $\text{for each }(k,\alpha)\text{ and each } \ell\leq k,
    \text{ there exists a unique } \beta
    \text{ such that }Q^{k}_{\alpha}\subseteq Q^\ell_{\beta}$;
\item
    $\text{for each $(k,\alpha)$ there exists at most $M$
        (a fixed geometric constant)  $\beta$ such that }  $\\
    $Q^{k+1}_{\beta}\subseteq Q^k_{\alpha}$, $\text{ and }
        Q^k_\alpha =\bigcup_{{Q\in\mathbb{D}_{k+1},
    Q\subseteq Q^k_{\alpha}}}Q$;

    \item $B(x^k_{\alpha},a_1\delta^k)
    \subseteq Q^k_{\alpha}\subseteq B(x^k_{\alpha},A_1\delta^k)
    =: B(Q^k_{\alpha})$;
\item 
   $\text{if }\ell\geq k\text{ and }
   Q^{\ell}_{\beta}\subseteq Q^k_{\alpha}\text{, then }
   B(Q^{\ell}_{\beta})\subseteq B(Q^k_{\alpha})$.
\end{itemize} 
The set $Q^k_\alpha$ is called a \textit{dyadic cube of
generation} $k$ with centre point $x^k_\alpha\in Q^k_\alpha$
and sidelength~$\delta^k$.

From the properties of the dyadic system above and from the doubling measure, we can deduce that there exists a constant
$C_{\mu,0}$ depending only on $C_\mu$, the doubling constant and $a_1, A_1$ as above, such that for any $Q^k_\alpha$ and $Q^{k+1}_\beta$  with $Q^{k+1}_\beta\subset Q^k_\alpha$,
\begin{align}\label{Cmu0}
\mu(Q^{k+1}_\beta)\leq \mu(Q^k_\alpha)\leq C_{\mu,0}\mu(Q^{k+1}_\beta).
\end{align}

We recall from \cite{hytonen2010systems} the following construction, which is a
slight elaboration of the seminal work by M.~Christ \cite{christ1990b}, as
well as Sawyer--Wheeden~\cite{2e1fcfce-3392-3df4-8cbf-5d7c1db26a90}.

\begin{thm}\label{theorem dyadic cubes}
On $(X,d,\mu)$, there exists a system of dyadic cubes with parameters
$0<\delta\leq (12A_0^3)^{-1}$ and $a_1:=(3A_0^2)^{-1},
A_1:=2A_0$. The construction only depends on some fixed set of
countably many centre points $x^k_\alpha$, having the
properties that
   $ d(x_{\alpha}^k,x_{\beta}^k)
        \geq \delta^k$ with $\alpha\neq\beta$,
    $\min_{\alpha}d(x,x^k_{\alpha})
        < \delta^k$ for all $x\in X,$
   and a certain partial order ``$\leq$'' among their index pairs
$(k,\alpha)$. In fact, this system can be constructed in such a
way that
$$
    \overline{Q}^k_\alpha
    =\overline{\{x^{\ell}_\beta:(\ell,\beta)\leq(k,\alpha)\}}, \quad\quad
    \widetilde{Q}^k_\alpha:=\operatorname{int}\overline{Q}^k_\alpha=
    \Big(\bigcup_{\gamma\neq\alpha}\overline{Q}^k_\gamma\Big)^c,
\quad \quad   \widetilde{Q}^k_\alpha\subseteq Q^k_\alpha\subseteq 
    \overline{Q}^k_\alpha,
$$
where $Q^k_\alpha$ are obtained from the closed sets
$\overline{Q}^k_\alpha$ and the open sets $\widetilde{Q}^k_\alpha$ by
finitely many set operations.
\end{thm}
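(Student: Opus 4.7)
The plan is to follow the Hytönen--Kairema construction (building on M.~Christ's earlier argument), taking as input only the quasi-metric structure governed by $A_0$ and producing the centers, order relation, and cubes simultaneously.

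First I would fix, for each generation $k\in\mathbb{Z}$, a maximal $\delta^k$-separated set of points $\{x^k_\alpha\}_{\alpha\in\mathbb{A}_k}\subset X$, i.e.\ a set that is $\delta^k$-separated (giving the lower bound $d(x^k_\alpha,x^k_\beta)\ge\delta^k$ for $\alpha\neq\beta$) and maximal with this property (giving the covering bound $\min_\alpha d(x,x^k_\alpha)<\delta^k$ for every $x\in X$). This is a Zorn's lemma / greedy construction and, up to the choice of an initial countable dense subset of $X$, the family $\{x^k_\alpha\}$ is countable. Next I would introduce the partial order $\preceq$ on pairs $(k,\alpha)$: for every $(k+1,\beta)$ I select exactly one parent $(k,\alpha)$ by taking the index $\alpha$ that minimizes $d(x^{k+1}_\beta,x^k_\alpha)$, with some fixed tie-breaking rule (say, a well-ordering of $\bigcup_k\mathbb{A}_k$), and then extend $\preceq$ transitively. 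A short maximality argument gives $d(x^{k+1}_\beta,x^k_\alpha)<\delta^k$ for the chosen parent.

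With the order in hand I would define the closed and open ``ideal cubes''
\[
\overline{Q}^k_\alpha:=\overline{\{x^\ell_\beta:(\ell,\beta)\preceq(k,\alpha)\}},\qquad
\widetilde{Q}^k_\alpha:=\Big(\bigcup_{\gamma\neq\alpha}\overline{Q}^k_\gamma\Big)^c,
\]
and then obtain the actual dyadic cubes $Q^k_\alpha$ by assigning the boundary overlap $\overline{Q}^k_\alpha\setminus\widetilde{Q}^k_\alpha$ in any measurable way consistent across scales (so that $\widetilde{Q}^k_\alpha\subseteq Q^k_\alpha\subseteq\overline{Q}^k_\alpha$ and the partition property survives). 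The heart of the argument is to verify the sandwich
\[
B(x^k_\alpha,a_1\delta^k)\subseteq Q^k_\alpha\subseteq B(x^k_\alpha,A_1\delta^k),
\]
with $a_1=(3A_0^2)^{-1}$ and $A_1=2A_0$. The outer inclusion comes from iterating the parent bound $d(x^{\ell+1}_\beta,x^\ell_\gamma)<\delta^\ell$ down the chain $(\ell,\beta)\preceq(k,\alpha)$ and summing the geometric tail $\sum_{\ell\ge k}\delta^\ell$ via the quasi-triangle inequality (this is where the constraint $\delta\le (12A_0^3)^{-1}$ enters). The inner inclusion is obtained by showing that any point $x$ with $d(x,x^k_\alpha)<a_1\delta^k$ cannot be inside $\overline{Q}^k_\gamma$ for any $\gamma\neq\alpha$: if it were, one would descend the chain and produce a center $x^\ell_\beta\preceq(k,\gamma)$ close to $x$, and hence a competing $(k,\gamma)$ parent closer to that descendant than $(k,\alpha)$, contradicting either the choice of parent or the separation $d(x^k_\alpha,x^k_\gamma)\ge\delta^k$.

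The remaining axioms then drop out: nestedness and uniqueness of ancestors come directly from the transitive order; the partition $X=\bigsqcup_\alpha Q^k_\alpha$ follows from maximality of the centers together with the disjointness of the open cores $\widetilde{Q}^k_\alpha$; and the bounded number of children $M$ at each scale is a consequence of doubling applied to the ball $B(Q^k_\alpha)$, since the children's centers are $\delta^{k+1}$-separated and all lie inside $B(x^k_\alpha,A_1\delta^k)$. The main obstacle I expect is the bookkeeping in the inner inclusion, where the exact numerical choice $\delta\le(12A_0^3)^{-1}$ must be tracked carefully through nested applications of the quasi-triangle inequality so that the geometric-series estimates close up with the stated constants.
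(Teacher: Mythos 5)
The paper does not prove this theorem: it is stated as a recalled result cited from Hyt\"onen--Kairema (\cite{hytonen2010systems}), with a passing remark that the construction elaborates Christ~\cite{christ1990b} and Sawyer--Wheeden. There is therefore no ``paper's own proof'' to compare against line by line; the relevant benchmark is the cited source.

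Against that benchmark, your outline is a faithful summary of the Hyt\"onen--Kairema argument and hits the right beats: (i) maximal $\delta^k$-separated nets at every scale, giving both the separation $d(x^k_\alpha,x^k_\beta)\geq\delta^k$ and the covering $\min_\alpha d(x,x^k_\alpha)<\delta^k$; (ii) a ``nearest parent'' rule plus tie-breaking to define $\preceq$ on index pairs, with $d(x^{k+1}_\beta,x^k_{\mathrm{parent}})<\delta^k$ by maximality; (iii) closed cubes as closures of descendant chains, open cubes as complements of the sibling closures, and $Q^k_\alpha$ obtained by distributing the boundary consistently; (iv) the outer ball inclusion by summing $\sum_{\ell\geq k}\delta^\ell$ through iterated quasi-triangle inequalities, which is exactly where $\delta\leq(12A_0^3)^{-1}$ and $A_1=2A_0$ enter; and (v) the inner ball inclusion by an exclusion argument against competing siblings, which is where $a_1=(3A_0^2)^{-1}$ is used. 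One technical point you compress: the well-definedness of $\preceq$ as a genuine partial order and the consistency of the boundary-assignment across scales (so that the partition and nestedness survive for $Q^k_\alpha$, not just the ideal $\overline{Q}^k_\alpha$ and $\widetilde{Q}^k_\alpha$) is actually one of the fiddlier parts of Hyt\"onen--Kairema and would need more care in a full write-up than the phrase ``in any measurable way consistent across scales'' suggests. But as an outline, it accurately reflects the cited construction, and you correctly identify the inner-inclusion bookkeeping as the place where the explicit numerical constants must be tracked.
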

We also recall the following remark from \cite{KAIREMA20161793}.
The construction of dyadic cubes requires their centre points
and an associated partial order be fixed \textit{a priori}.
However, if either the centre points or the partial order is
not given, their existence already follows from the
assumptions; any given system of points and partial order can
be used as a starting point. Moreover, if we are allowed to
choose the centre points for the cubes, the collection can be
chosen to satisfy the additional property that a fixed point
becomes a centre point at \textit{all levels}:
\label{fixedpoint}
\begin{itemize}
    \item 

    $\text{given a fixed point } x_0\in X, \text{ for every } k\in \mathbb{Z},
        \text{ there exists }\alpha \text{ such that }$ 
        
    $ x_0
        = x^k_\alpha,\text{ the centre point of }
Q^k_\alpha\in\mathcal{D}_k$.
\end{itemize}
The sparse family of dyadic cubes is similar to the setting in the case of Euclidean space.
\begin{defn}[Sparse Family of Dyadic Cubes]~\\
Given $0<\eta<1$, a collection $\mathcal S \subset \mathcal D$ of dyadic cubes is said to be $\eta$-sparse provided that for every $Q\in\mathcal S$, there is a measurable subset $E_Q \subset Q$ such that
$\mu(E_Q) \geq \eta \mu(Q)$ and the sets $\{E_Q\}_{Q\in\mathcal S}$ have only finite overlap.
\end{defn}
\section{Relation: Proof of Proposition \ref{Apmu Aplambda}}

Recall that for $w(t):=t^{\alpha}$, we have
\[
w\in\widetilde{A}_{p,\lambda}\Longleftrightarrow -1<\alpha<p-1+(2\lambda+1)p.
\]
The class that we focus on is $\widetilde{A}_{p,\lambda-\frac{1}{2}}$, and hence $\alpha$ must satisfy
\[
-1<\alpha<p-1+2\lambda p.
\]
Now, we investigate the range of $\alpha$ such that $w\in A_p (\mu)$. Given a ball $B=B_{R}(x_{0})$, where $x_{0}$ is the center and $R$ is the radius, we consider the following two cases:
\begin{itemize}
    \item \textbf{Case (1)}:\,$|x_{0}|\geq3R$.
\end{itemize}
In the first case, for those $|x-x_{0}|<R$, we have $|x|\sim |x_{0}|$, hence
\[
\sup_{B}\bigg(\frac{1}{\mu(B)}\int_B |x|^{\alpha}d\mu\bigg)\bigg(\frac{1}{\mu(B)}\int_B |x|^{{-\alpha}\frac{p'}{p}}d\mu\bigg)^{p-1}\sim |x_{0}|^{\alpha}|x_{0}|^{-\alpha \frac{p'}{p}(p-1)} \sim 1.
\]
\begin{itemize}
    \item \textbf{Case (2)}:\,$|x_0|<3R$.
\end{itemize}
In the second case, one has
\[
B_{R}(x_0)\subset B_{4R}(0)\subset B_{7R}(x_0)
\]
and by the doubling property of $\mu$, we may replace $B_R(x_{0})$ by $B_{4R}(0)$ and hence compute the following
\begin{align*}
&\bigg(\frac{1}{R^{2\lambda+1}}\int_{-R}^{R}x^{\alpha}d\mu\bigg)\bigg(\frac{1}{R^{2\lambda+1}}\int_{-R}^{R}x^{-\alpha \frac{p'}{p}}d\mu\bigg)^{p-1}\\
&=\bigg(\frac{1}{R^{2\lambda+1}}\int_{-R}^{R}x^{\alpha+2\lambda}dx\bigg)\bigg(\frac{1}{R^{2\lambda+1}}\int_{-R}^{R}x^{-\alpha \frac{p'}{p}+2\lambda}dx\bigg)^{p-1}.
\end{align*}
If we want the above term to be finite uniformly in $R$, we only need to check both terms are integrable near $0$. Hence this gives the following conditions  for $\alpha$:
\[
\alpha+2\lambda >-1,\quad -\alpha\frac{p'}{p}+2\lambda>-1,
\]
which is equivalent to
\begin{align*}
 -1-2\lambda<\alpha < p-1+2\lambda(p-1).
\end{align*}
The proof is complete.

\section{Proof of Theorem \ref{thma}}
Define $\sigma(t):=w(t)^{1-p'}$ and $\sigma_*(t):=t^{2\lambda p'}w(t)^{1-p'}$.  Our strategy is to use the duality argument:
\[
\|\mathcal{A}_{\mathcal{S}}\|_{L^p(\R_+,wdx)\to L^p(\R_+wdx)}=\|\mathcal{A}_{\mathcal{S}}(\cdot\sigma)\|_{L^p(\R_+,\sigma dx)\to L^p(\R_+,wdx)}.
\]
\begin{itemize}
    \item Case $(1)$: $p\geq 2$
\end{itemize}
For all $f\in L^p(\R_+,\sigma dx)$ and $g\in L^{p'} (\R_+,wdx)$ with
$\|g\|_{L^{p'} (\R_+,wdx)}=1$,
\begin{align*}
 \langle\mathcal{A}_{\mathcal{S}}(f\sigma),g\rangle_{wdx}  &=\sum_{Q\in S} \frac{1}{\mu(Q)}\int_Q f(t) \sigma(t)d\mu(t)\cdot\int_Q g(x)w(x)dx\\
 &\leq  [w]_{\widetilde{A}_{p,\lambda-\frac{1}{2}}}\sum_{Q\in S} \frac{\mu(Q)^{p-1}}{w(Q)\sigma_* (Q)^{p-1}}\cdot\int_Q f(t) \sigma(t)d\mu(t)\cdot\int_Q g(x)w(x)dx\\
 &=  [w]_{\widetilde{A}_{p,\lambda-\frac{1}{2}}}\sum_{Q\in S} \frac{\mu(Q)^{p-1}}{\sigma_* (Q)^{p-1}}\cdot\int_Q f(t) \sigma(t)d\mu(t)\cdot\frac{1}{w(Q)}\int_Q g(x)w(x)dx\\
 &\lesssim [w]_{\widetilde{A}_{p,\lambda-\frac{1}{2}}}\sum_{Q\in S} \frac{\mu(E_Q)^{p-1}}{\sigma_* (E_Q)^{p-1}}\cdot\int_Q f(t) \sigma(t)d\mu(t)\cdot\frac{1}{w(Q)}\int_Q g(x)w(x)dx.
\end{align*}
Note that
\[
\mu(E_Q)^{p-1}\leq w(E_Q)^{\frac{p-1}{p}}\cdot \sigma_* (E_Q)^{\frac{p-1}{p'}},
\]
and hence
\begin{align*}&\langle\mathcal{A}_{\mathcal{S}}(f\sigma),g\rangle_{wdx}\\
&\lesssim [w]_{\widetilde{A}_{p,\lambda-\frac{1}{2}}}\sum_{Q\in S} \frac{\sigma_* (E_Q)^{\frac{1}{p}}}{\sigma_* (Q)}\cdot\int_Q f(t) \sigma(t)d\mu(t)\cdot\frac{w(E_Q)^{\frac{1}{p'}}}{w(Q)}\int_Q g(x)w(x)dx\\
&\leq [w]_{\widetilde{A}_{p,\lambda-\frac{1}{2}}}\bigg(\sum_{Q\in S} \frac{\sigma_* (E_Q)}{\sigma_* (Q)^p}\cdot(\int_Q f(t) \sigma(t)d\mu(t))^p\bigg)^{\frac{1}{p}}\cdot\bigg(\sum_{Q\in S}\frac{w(E_Q)}{w(Q)^{p'}}\cdot(\int_Q g(x)w(x)dx)^{p'}\bigg)^{\frac{1}{p'}}\\
&\leq [w]_{\widetilde{A}_{p,\lambda-\frac{1}{2}}}\cdot\|M^D_{\sigma_*} (f\cdot t^{2\lambda(1-p')})\|_{L^p (\R_+,\sigma_* dx)} \cdot \|M^D_w g\|_{L^{p'} (\R_+,wdx)}\\
&\leq [w]_{\widetilde{A}_{p,\lambda-\frac{1}{2}}}\cdot\|f\|_{L^p (\R_+,\sigma dx)} \cdot \|g\|_{L^{p'} (\R_+,wdx)},
\end{align*}
which implies
\[
\|\mathcal{A}_{\mathcal{S}}\|_{L^p(\R_+,wdx)\to L^p(\R_+,wdx)}\lesssim [w]_{\widetilde{A}_{p,\lambda-\frac{1}{2}}}.
\]
\begin{itemize}
    \item Case $(2):$ $1<p<2$
\end{itemize}
By Case $(1)$ and the relation
$$
\|\mathcal{A}_{\mathcal{S}}\|_{L^p(\R_+,wdx)\to L^p(\R_+,wdx)} = \|\mathcal{A}_{\mathcal{S}}\|_{L^{p'}(\R_+,\sigma_* dx)\to L^{p'}(\R_+,\sigma_* dx)}.
$$
Then we have
\[
\|\mathcal{A}_{\mathcal{S}}\|_{L^p(\R_+,wdx)\to L^p(\R_+,wdx)}\lesssim [\sigma_*]_{\widetilde{A}_{p',\lambda-\frac{1}{2}}}=[w]^{\frac{1}{p-1}}_{\widetilde{A}_{p,\lambda-\frac{1}{2}}}.
\] 
Hence, we get
\[
\|\mathcal{A}_{\mathcal{S}}\|_{L^p(\R_+,wdx)\to L^p(\R_+,wdx)} \lesssim [w]^{\max\{1,\frac{1}{p-1}\}}_{\widetilde{A}_{p,\lambda-\frac{1}{2}}},\quad\forall 1<p<\infty.
\]

\section{Proof of Theorem \ref{thmb}}
For all $f\in L^p(wdx)$ and $g\in L^{p'}(wdx)$ with $\|g\|_{L^{p'}(wdx)} =1$, we have
\begin{align*}
\langle\mathcal{A}_{\mathcal{S},b}f,g\rangle_{wdx}\leq \sum_{Q\in\mathcal{S}} \frac{1}{\mu(Q)}\int_Q |f(t)|d\mu(t)\cdot \int_Q |b(x)-b_Q||g(x)|w(x)dx.
\end{align*}
Note that there exists a sparse family of dyadic cubes $\tilde{\mathcal{S}}$ with $\mathcal{S}\subset\tilde{\mathcal{S}}$ such that
\[
|b(x)-b_Q|\cdot\chi_Q (x)\lesssim\sum_{P\in\tilde{\mathcal{S}}, P\subset Q } \frac{1}{\mu(P)}\int_P |b(x)-b_P|d\mu(t)\cdot\chi_P (x).
\]
Then one has
\begin{align*}
\langle\mathcal{A}_{\mathcal{S},b}f,g\rangle_{wdx}
&\lesssim\|b\|_{{\rm BMO}_{\triangle_\lambda}(\R_+)}\cdot\sum_{Q\in\mathcal{S}} \frac{1}{\mu(Q)}\int_Q |f(t)|d\mu(t)\cdot \sum_{P\in\tilde{\mathcal{S}}, P\subset Q }\mu(P)\cdot\bigg(\frac{1}{\mu(P)}\int_P |\tilde{g}(x)|d\mu(x)\bigg)\\
&\leq \|b\|_{{\rm BMO}_{\triangle_\lambda}(\R_+)}\cdot\sum_{Q\in\mathcal{S}} \frac{1}{\mu(Q)}\int_Q |f(t)|d\mu(t)\cdot \int_Q \mathcal{A}_{\tilde{\mathcal{S}}}|\tilde{g}|(x)d\mu(x)\\
&\leq \|b\|_{{\rm BMO}_{\triangle_\lambda}(\R_+)}\cdot\int_X \mathcal{A}_{\tilde{\mathcal{S}}}|f|(x)\mathcal{A}_{\tilde{\mathcal{S}}}|\tilde{g}|(x)d\mu(x),
\end{align*}
where $\tilde{g}(x):=g(x)w(x)x^{-2\lambda}.$
Besides, the sparse operator is a self-adjoint operator relative to the measure $\mu$, therefore
\begin{align*}
\langle\mathcal{A}_{\mathcal{S},b}f,g\rangle_{wdx}&\lesssim    \|b\|_{{\rm BMO}_{\triangle_\lambda}(\R_+)}\cdot\int_X \mathcal{A}_{\tilde{\mathcal{S}}}\bigg(\mathcal{A}_{\tilde{\mathcal{S}}}|f|(x)\bigg)|\tilde{g}|(x)d\mu(x)\\
&=\|b\|_{{\rm BMO}_{\triangle_\lambda}(\R_+)}\cdot\int_X \mathcal{A}_{\tilde{\mathcal{S}}}\bigg(\mathcal{A}_{\tilde{\mathcal{S}}}|f|(x)\bigg)|g|(x)w(x)dx\\
&\leq \|b\|_{{\rm BMO}_{\triangle_\lambda}(\R_+)}\cdot \|\mathcal{A}_{\tilde{\mathcal{S}}}(\mathcal{A}_{\tilde{\mathcal{S}}}|f|)\|_{L^p(\R_+,wdx)}\\
&\lesssim \|b\|_{{\rm BMO}_{\triangle_\lambda}(\R_+)}[w]^{2\max\{1,\frac{1}{p-1}\}}_{\widetilde{A}_{p,\lambda-\frac{1}{2}}}\cdot \|f\|_{L^p(\R_+,wdx)}.
\end{align*}
Hence,
\[
\|\mathcal{A}_{\mathcal{S},b}\|_{L^p(\R_+,wdx)\to L^p(\R_+,wdx)} \lesssim \|b\|_{{\rm BMO}_{\triangle_\lambda}(\R_+)}[w]^{2\max\{1,\frac{1}{p-1}\}}_{\widetilde{A}_{p,\lambda-\frac{1}{2}}}.
\]
Following the same argument, we have
\[
\|\mathcal{A}^*_{\mathcal{S},b}\|_{L^p(\R_+,wdx)\to L^p(\R_+,wdx)} \lesssim \|b\|_{{\rm BMO}_{\triangle_\lambda}(\R_+)}[w]^{2\max\{1,\frac{1}{p-1}\}}_{\widetilde{A}_{p,\lambda-\frac{1}{2}}}.
  \] 

  \section{Proof of Theorem \ref{thmc} }

\subsection{ Part $(1)$}
  We show $(1)$ first. 
  Consider the algorithm below. Let $H$ be some set in $\R$ and $f\in L^p(wdx)$, then 
  \begin{align*}
&|\mathcal{A}^*_{\mathcal{S},b}f(x)|\cdot\chi_{x\in H} (x)\\
&\leq \sum_{Q\in\mathcal{S}} \bigg(\frac{1}{\mu(Q)}\int_Q |b(y)-b_Q||f(y)|d\mu\bigg)\chi_Q (x)\cdot\chi_{x\in H} (x)\\
&\lesssim\sum_{Q\in\mathcal{S}}\sum_{R\in\tilde{\mathcal{S}},R\subset Q} \bigg(\frac{1}{\mu(R)}\int_R |b(y)-b_R|d\mu\bigg)\bigg(\frac{1}{\mu(Q)}\int_R |f(x)|d\mu(x)\bigg)\chi_Q (x)\cdot\chi_{x\in H} (x).
\end{align*}
Let $\varepsilon>0$. Suppose $b\in {\rm VMO}_{\triangle_\lambda} (\mathbb{R_+})$, then there exists a dyadic cube, $Q_A$, with $l(Q_A)=2^A$ such that for all dyadic cubes, $B$, with $B\cap Q_A=\emptyset$ or $Q_A\subset B$, one has
\[
\frac{1}{\mu(B)}\int_B|b(t)-b_{B}|d\mu(t)<\varepsilon.
\]
Moreovoer, there is $\delta>0$ such that \[
l(Q)<\delta\implies \frac{1}{\mu(B)}\int_B|b(t)-b_{B}|d\mu(t)<\varepsilon.
\]
Therefore, if $H:=\{|x|>K\}$, for some $K$ large enough, then
\begin{align*}
&|\mathcal{A}^*_{\mathcal{S},b}f(x)|\cdot\chi_{x\in H} (x)\\
&\lesssim\sum_{Q\in\mathcal{S},Q\cap B_K(0)=\emptyset}\sum_{R\in\tilde{\mathcal{S}},R\subset Q} \bigg(\frac{1}{\mu(R)}\int_R |b(y)-b_R|d\mu\bigg)\bigg(\frac{1}{\mu(Q)}\int_R |f(x)|d\mu(x)\bigg)\chi_Q (x)\\
&\leq\varepsilon\sum_{Q\in\mathcal{S},Q\cap B_K(0)=\emptyset}\sum_{R\in\tilde{\mathcal{S}},R\subset Q} \bigg(\frac{1}{\mu(Q)}\int_R |f(x)|d\mu(x)\bigg)\chi_Q (x)\\
&=\varepsilon\sum_{Q\in\mathcal{S},Q\cap B_K(0)=\emptyset}\frac{1}{\mu(Q)}\sum_{R\in\tilde{\mathcal{S}},R\subset Q} \mu(R)\bigg(\frac{1}{\mu(R)}\int_R |f(x)|d\mu(x)\bigg)\chi_Q (x)\\
&\leq\varepsilon\sum_{Q\in\mathcal{S},Q\cap B_K(0)=\emptyset}\frac{1}{\mu(Q)}\int_Q \mathcal{A}^*_{\tilde{\mathcal{S}}}|f|(x)d\mu(x)\chi_Q (x)\\
&\leq\varepsilon\mathcal{A}_{\mathcal{S}}(\mathcal{A}^*_{\tilde{{\mathcal{S}}}}|f|)(x)\chi_Q (x).
\end{align*}
By the same argument, if we consider the following decomposition
\begin{align*}
\mathcal{A}^*_{\mathcal{S},b}f(x)&=\sum_{Q\in\mathcal{S},Q\cap Q_A=\emptyset}\frac{1}{\mu(Q)}\int_{Q}|b(t)-b_Q|f(t)\,d\mu(t)\cdot\chi_{Q}(x)\\
&\quad+\sum_{Q\in\mathcal{S},Q_A\subset Q}\frac{1}{\mu(Q)}\int_{Q}|b(t)-b_Q|f(t)\,d\mu(t)\cdot\chi_{Q}(x)\\
&\quad+\sum_{Q\in\mathcal{S},Q\subset Q_A, l(Q)<\delta }\frac{1}{\mu(Q)}\int_{Q}|b(t)-b_Q|f(t)\,d\mu(t)\cdot\chi_{Q}(x)\\
&\quad+\sum_{Q\in\mathcal{S},Q\subset Q_A,l(Q)\geq\delta}\frac{1}{\mu(Q)}\int_{Q}|b(t)-b_Q|f(t)\,d\mu(t)\cdot\chi_{Q}(x).
\end{align*}
Then the first three terms are all bounded by \[
\varepsilon\,\mathcal{A}_{\mathcal{S}}(\mathcal{A}^*_{\tilde{{\mathcal{S}}}}|f|)(x)\chi_Q (x).
\]
Moreover, there is $0<N<\infty$ such that
\[
\#\{Q\in\mathcal{S}:Q\subset Q_A,l(Q)\geq\delta \}\leq N,
\]
then the fourth term
\[\sum_{Q\in\mathcal{S},Q\subset Q_A,l(Q)\geq\delta}\frac{1}{\mu(Q)}\int_{Q}|b(t)-b_Q|f(t)\,d\mu(t)\cdot\chi_{Q}(x)\] has finite range, and hence a compact operator.
Therefore,\[
b\in {\rm VMO}_{\triangle_\lambda} (\mathbb{R_+})\implies \mathcal{A}^*_{\mathcal{S},b}:L^p(\R_+,wdx) \underset{compact}{\longrightarrow} L^p(\R_+,wdx).
\]

\begin{itemize}
    \item  Compactness of $\mathcal{A}_{\mathcal{S},b}$:
\end{itemize}
Since
\[
\|\mathcal{A}_{\mathcal{S},b}\|_{L^p(\R_+,wdx)\to L^p(\R_+,wdx)}=
\|\mathcal{A}^*_{\mathcal{S},b}\|_{L^{p'}(\R_+,\sigma_* dx)\to L^{p'}(\R_+,\sigma_* dx)}<\infty
\]
and 
\[
w\in \widetilde{A}_{p,\lambda-\frac{1}{2}}\Longleftrightarrow
\sigma_*\in \widetilde{A}_{p',\lambda-\frac{1}{2}},
\]
 we have \[
b\in {\rm VMO}_{\triangle_\lambda} (\mathbb{R_+})\implies \mathcal{A}_{\mathcal{S},b}:L^p(\R_+,wdx) \underset{compact}{\longrightarrow} L^p(\R_+,wdx).
\]

\subsection{Part $(2)$} 

Now assume that $b\in {\rm BMO}_{\triangle_\lambda} (\mathbb{R_+})$ such that $\mathcal{A}_{\mathcal{S},b}$ and $\mathcal{A}^*_{\mathcal{S},b}$ are compact from $L^p(\R_+,wdx)$ to $L^p(\R_+,wdx)$. 
Hence, $[b,R_\lambda]$ is compact from $L^p(\R_+,wdx)$ to $L^p(\R_+,wdx)$.
But, for the sake of contradiction, further assume that   $b \not\in {\rm VMO}_{\triangle_\lambda} (\mathbb{R_+})$.  

We now use the idea from \cite{LACEY2022125869}: On a Hilbert space $\mathcal H$, with canonical basis $e_j$, $j\in \mathbb N$, 
an operator $T $ with $T e_j = v$, with non-zero $v\in \mathcal H$, is necessarily unbounded.

Suppose that $b\not\in {\rm VMO}_{\triangle_\lambda} (\mathbb{R_+})$, then at least one of the three conditions in Definition \ref{defvmo} does not hold.  
The argument is similar in all three cases, and we just present the case that the first condition in Definition~\ref{defvmo} does not hold.  
Then there exist $\delta_0>0$ and a sequence of balls $\{B_k\}_{k=1}^\infty=\{B_j(x_k,r_k)\}_{j=1}^\infty \subset \mathbb R_+$ such that $r_k\to 0$ as $j\to\infty$ and that
\begin{equation}
{1\over \mu(B_k)}\int_{B_k} |b(x)-b_{B_k}|d\mu(x)\geq\delta_0.
\end{equation}
Without loss of generality, we can further assume that 
\begin{align}\label{ratio1}
4 r_{k_{i+1}}\leq  r_{k_{i}}.
\end{align}

In Section 7.5 of \cite{duong2021two}, the authors proved the following argument holds for the kernel $K(x,y)$ of the operator $R_\lambda$:  There exist positive constants $3\le A_1\le A_2$ such that for any interval $B:=B(x_0, r)\subset \R_+$, there exists another interval $\widetilde B:=B(y_0, r)\subset \R_+$
such that $A_1 r\le |x_0- y_0|\le A_2 r$, 
and  for all $(x,y)\in ( B\times \widetilde{B})$, $K(x, y)$ does not change sign and
\begin{equation}\label{e-assump cz ker low bdd}
|K(x, y)|\gs \frac1{\mu(\widetilde B)},
\end{equation}
where $d\mu(x):=x^{2\lambda}dx$ with $dx$ the Lebesgue measure on $\R_+$.

Let $\alpha_{b}(\widetilde B_k)$ be a median value of $b$ on the ball $\widetilde B_k $ with respect to $\mu$. Namely,  
$\alpha_{b}(\widetilde B_k)$ is a real number so that the two sets below have measures at least $ \frac{1}2 \mu  (\tilde B_j) $.  
\begin{align*}
\nonumber F_{k,1}\subset\{ y\in \tilde B_k: b(y)\leq \alpha_b(\tilde B_k) \},\qquad F_{k,2}\subset\{ y\in \tilde B_k: b(y)\geq \alpha_b(\tilde B_k) \}.
\end{align*}
Next we define
$E_{k,1}=\{ x\in B: b(x)\geq \alpha_b(\tilde B_k) \}$ and $E_{k,2}=\{ x\in B: b(x)< \alpha_b(\tilde B_k) \}.$\ 
Then $B_k=E_{k,1}\cup E_{k,2}$ and $E_{k,1}\cap E_{k,2}=\emptyset$. 
And it is clear that 
\begin{equation}\begin{split}\label{bx-by0-1 1}
b(x)-b(y) &\geq 0, \quad (x,y)\in E_{k,1}\times F_{k,1},\\ b(x)-b(y) &< 0, \quad (x,y)\in E_{k,2}\times F_{k,2}.
\end{split}\end{equation}
For $(x,y)$ in $(E_{k,1}\times F_{k,1} )\cup (E_{k,2}\times F_{k,2})$, we have 
\begin{align}\label{bx-by-1 1}
|b(x)-b(y)| 
&=|b(x)-\alpha_b(\tilde B_k)| +|\alpha_b(\tilde B_k) -b(y)| \geq |b(x)-\alpha_b(\tilde B_k)|. 
\end{align}

We now consider 
$$\widetilde F_{k,1}:= F_{k,1}\bigg\backslash \bigcup_{\ell=k+1}^\infty \tilde B_\ell\quad{\rm and}\quad \widetilde F_{k,2}:= F_{k,2}\bigg\backslash \bigcup_{\ell=k+1}^\infty \tilde B_\ell,\quad {\rm for}\ k=1,2,\ldots.$$
Then, based on the decay condition of the measures of  $\{B_j\}$ as in \eqref{ratio1} we obtain that
for each $j$,
\begin{align}\label{Fj1}
 \mu(\widetilde F_{k,1}) &\geq \mu(F_{k,1})- \mu\bigg( \bigcup_{\ell=k+1}^\infty \tilde B_\ell\Big)\\
 &\geq
{1\over 2} \mu(\tilde B_k)-\sum_{\ell=k+1}^\infty \mu(  \tilde B_\ell)\nonumber\\
&\geq {1\over 2} \mu(\tilde B_k)- {1\over 3}\mu(\tilde B_k)
= {1\over 6} \mu(\tilde B_k).\nonumber
\end{align}
Similar estimate holds for $\widetilde F_{k,2}$.

Now for each $j$, we have that
\begin{align*}
&{1\over \mu(B_k)} \int_{B_k} |b(x)-b_{B_k}|d\mu(x)\\
&\leq{2\over \mu(B_k)}\int_{B_{k}}\big|b(x)-\alpha_b(\tilde B_k)\big|d\mu(x)\\
&= {2\over \mu(B_k)}\int_{E_{k,1}}\big|b(x)-\alpha_b(\tilde B_k)\big|d\mu(x) + {2\over \mu(B_k)}\int_{E_{k,2}}\big|b(x)-\alpha_b(\tilde B_k)\big|d\mu(x).
\end{align*}
Thus, combining with the above inequalities, we obtain that as least one of the following inequalities holds:
\begin{align*}
{2\over \mu(B_k)}\int_{E_{k,1}}\big|b(x)-m_b(\tilde B_k)\big|d\mu(x) \geq {\delta_0\over2},\quad 
{2\over \mu(B_k)}\int_{E_{k,2}}\big|b(x)-m_b(\tilde B_k)\big|d\mu(x) \geq {\delta_0\over2}.
\end{align*}

Without lost of generality, we now assume that the first one holds, i.e., 
\begin{align*}
{2\over \mu(B_k)}\int_{E_{k,1}}\big|b(x)-m_b(\tilde B_k)\big|d\mu(x) \geq {\delta_0\over2}.
\end{align*}

Therefore, for each $j$, from \eqref{e-assump cz ker low bdd} and \eqref{Fj1}  we obtain that 
\begin{align*}
{\delta_0\over4}&\leq{1\over \mu(B_k)}\int_{E_{k,1}}\big|b(x)-\alpha_b(\tilde B_k)\big|d\mu(x)\\
&\lesssim 
{1\over \mu(B_k)}{{\mu(\widetilde F_{k,1})}\over \mu(B_k)}\int_{E_{k,1}}\big|b(x)-\alpha_b(\tilde B_k)\big|d\mu(x)\\
&\lesssim
{1\over \mu(B_k)}\int_{E_{k,1}}\int_{\widetilde F_{k,1}} |K(x,y)|\big|b(x)-b(y)\big|d\mu(y)d\mu(x).
\end{align*}
Next, since for $x\in E_{k,1}$ and $y\in \widetilde F_{k,1}$, $K(x,y)$ does not change sign and  
$b(x)-b(y)$ does not change sign either, we obtain that
\begin{align*}
{\delta_0}
&\lesssim {1\over \mu(B_k)}\bigg|\int_{E_{k,1}}\int_{\widetilde F_{k,1}} K(x,y)\big(b(x)-b(y)\big)d\mu(y)d\mu(x)\bigg|\\
&\lesssim
{1\over \mu(B_k)}  \int_{E_{k,1}}\left|[b, T](\chi_{\widetilde F_{{k,1}}})(x) \right|d\mu(x).
\end{align*}

Next, by using H\"older's
inequality we further have
\begin{align*}
\delta_0 &\lesssim {1\over    \mu(B_k)} \int_{E_{k,1}}\left|[b, T](\chi_{\widetilde F_{{k,1}}})(x)\right|  w^{1\over p}(x) w^{-{1\over p}}(x) d\mu(x)\\
&\lesssim
 {1\over \mu(B_k)} \sigma_* (E_{k,1})^{1\over p'} \bigg( \int_{\mathbb R_+}\big|[b, T](\chi_{\widetilde F_{{k,1}}})(x)\big|^p w(x) d\mu(x)\bigg)^{1\over p}\\
 &\leq\left(\frac{\sigma_*(E_{k,1})}{\sigma_*(B_k)}\right)^{1\over {p'}}\cdot w(B_k)^{\frac{-1}{p}}\bigg( \int_{\mathbb R_+}\big|[b, T](\chi_{\widetilde F_{{k,1}}})(x)\big|^p w(x) d\mu(x)\bigg)^{1\over p}\\
&=
\bigg( \int_{\mathbb R_+}\big|[b, T](f_k)(x)\big|^p w(x)d\mu(x) \bigg)^{1\over p},
\end{align*}
where in the above inequalities 
we denote
$$ f_k := {\chi_{\widetilde F_{k,1}}  \over w(B_k)^{1\over p}}.$$
This is a sequence of disjointly supported functions, by \eqref{Fj1}, with $\|f_k\|_{L^p(\R_+,wdx)} \sim 1 $.

Return to the assumption of compactness, and let $ \phi $ be in the closure of $  \{[b, T](f_j)\}_j$.  
We have $ \lVert \phi \rVert _{L ^{p} (\R_+,wdx )} \gtrsim 1$.   And, choose $ j_i$ so that 
$$ \|\phi- [b, T](f_{j_i})\|_{L^p(\R_+,wdx)} \leq  2^{-i}. $$

We then take non-negative numerical sequence $\{a_i\}$ with 
$$ \|\{a_i\}\|_{\ell^{p'}}<\infty \quad {\rm but}\quad   \|\{a_i\}\|_{\ell^1}=\infty.  $$  
Then, $ \psi =  \sum_{i} a_i f _{j_i} \in L ^{p} (\R_+,wdx ) $, and 
\begin{align*}
\Bigl\lVert  \sum_i {a_i} \phi   -  [ b, T ]{\psi } \Bigr\rVert_{L^p (\R_+,wdx )}  
&\leq \bigg\| \sum_{i=1}^\infty a_i   \bigl ( \phi - [b, T](f_{j_i}\bigr) \bigg\|_{L^p(\R_+,wdx)}
\\ & \leq \lVert  a_i\rVert _{\ell ^{p'}}  
\Bigl [   \sum_{i} \|\phi- [b, T](f_{j_i}) \| _{L^p(\R_+,wdx)} ^{p}  \Bigr] ^{1/p} \\
&\lesssim 1. 
\end{align*}
So $ \sum_i {a_i} \phi  \in L^p (\R_+,wdx )$.  
But $ \sum_i{a_i}\phi $ is infinite on a set of positive measures. This is a contradiction that completes the proof.

\section{Embedding: Proof of Theorem \ref{thm BMO equiv}}

To establish the comprehensive characterization of the weighted endpoint estimate of $[b, R_\lambda]$, we also discuss the embedding relation between some specific $\rm BMO$ space, as well.\\

For the class $\tilde{A}_{p,\lambda-\frac{1}{2}}$, the intertwining phenomenon between ${\rm BMO}_{L^p(w)} (\mathbb{R_+})$ and ${\rm BMO}_{\triangle_\lambda} (\mathbb{R_+})$ is the mutual embedding. A prototype in $\R^n$ for the classical $A_p$ weight was studied in \cite{ho2011characterizations}.

\begin{lem}\label{emb}
Let $0< p<\infty$ and $w\in \widetilde{A}_{\infty,\lambda-\frac{1}{2}}$, then
\[
{\rm BMO}_{\triangle_\lambda} (\mathbb{R_+})\hookrightarrow{\rm BMO}_{L^p(w)} (\mathbb{R_+}).
\]
 Conversely, let $1\leq p<\infty$ and if  $w\in \tilde{A}_{p,\lambda-\frac{1}{2}}$, then
\[
{\rm BMO}_{L^p(w)} (\mathbb{R_+})\hookrightarrow{\rm BMO}_{\triangle_\lambda} (\mathbb{R_+}).
\]
\end{lem}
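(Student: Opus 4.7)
The plan is to reduce both embeddings to classical Muckenhoupt theory on the space of homogeneous type $(\R_+,|\cdot|,d\mu)$. The key reformulation is that $w\in\widetilde{A}_{p,\lambda-\frac{1}{2}}$ if and only if the transformed weight $u(x):=w(x)x^{-2\lambda}$ belongs to the classical Muckenhoupt class $A_p(d\mu)$ on $(\R_+,|\cdot|,d\mu)$; this follows at once from the identities $u\,d\mu=w\,dx$ and $u^{-1/(p-1)}\,d\mu=t^{2\lambda p'}w^{-1/(p-1)}\,dt$, which make the tilded characteristic coincide termwise with the classical one.

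\textbf{First embedding.} Fix $w\in\widetilde{A}_{\infty,\lambda-\frac{1}{2}}$ and choose $p_0\geq 1$ with $u\in A_{p_0}(d\mu)$. Two classical facts on $(\R_+,|\cdot|,d\mu)$ are then available. The John--Nirenberg inequality for $f\in{\rm BMO}_{\triangle_\lambda}$ gives
\[
\mu(\{x\in B:|f(x)-f_B|>t\})\leq c_1\,e^{-c_2 t/\|f\|_{{\rm BMO}_{\triangle_\lambda}}}\mu(B),
\]
while the quantitative $A_\infty(d\mu)$ decay applied to $u$ yields, for $E\subset B$,
\[
\frac{w(E)}{w(B)}=\frac{\int_E u\,d\mu}{\int_B u\,d\mu}\leq C\Big(\frac{\mu(E)}{\mu(B)}\Big)^{\delta}
\]
for some $\delta>0$. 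Combining these via the layer-cake formula,
\[
\int_B|f-f_B|^p\,w\,dx\lesssim w(B)\int_0^\infty t^{p-1}e^{-c_2\delta t/\|f\|_{{\rm BMO}_{\triangle_\lambda}}}\,dt\lesssim w(B)\|f\|_{{\rm BMO}_{\triangle_\lambda}}^p,
\]
valid for every $p>0$; dividing by $w(B)$ and taking $p$-th roots gives the first inclusion.

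\textbf{Second embedding.} For $p>1$, apply H\"older's inequality in the Lebesgue variable:
\[
\int_B|f-f_B|\,d\mu=\int_B|f-f_B|\,x^{2\lambda}\,dx\leq\Big(\int_B|f-f_B|^p w\,dx\Big)^{1/p}\Big(\int_B x^{2\lambda p'}w^{-1/(p-1)}\,dx\Big)^{1/p'}.
\]
The $\widetilde{A}_{p,\lambda-\frac{1}{2}}$ characteristic rewrites precisely as
\[
\Big(\int_B x^{2\lambda p'}w^{-1/(p-1)}\,dx\Big)^{1/p'}\leq [w]^{1/p}_{\widetilde{A}_{p,\lambda-\frac{1}{2}}}\frac{\mu(B)}{w(B)^{1/p}},
\]
and substituting and dividing by $\mu(B)$ gives $\|f\|_{{\rm BMO}_{\triangle_\lambda}}\lesssim[w]^{1/p}\|f\|_{{\rm BMO}_{L^p(w)}}$. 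For $p=1$, the pointwise $\widetilde{A}_{1,\lambda-\frac{1}{2}}$ bound $w(B)/\mu(B)\lesssim w(x)/x^{2\lambda}$ a.e.\ on $B$ rearranges to $x^{2\lambda}/\mu(B)\lesssim w(x)/w(B)$, and the embedding follows immediately without H\"older.

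\textbf{Main obstacle.} The conceptual content is in the first embedding, where John--Nirenberg and the $A_\infty(d\mu)$ decay must cooperate through the layer-cake bound. Both tools are standard on $(\R_+,|\cdot|,d\mu)$ as a space of homogeneous type, so once the reduction $\widetilde{A}_{p,\lambda-\frac{1}{2}}\Leftrightarrow A_p(d\mu)$ for $u=w\,x^{-2\lambda}$ is noted, the argument becomes routine; the second embedding is essentially a single H\"older step combined with the $\widetilde{A}_{p,\lambda-\frac{1}{2}}$ condition.
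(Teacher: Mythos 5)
Your proof is correct and follows essentially the same route as the paper: for the first embedding, combine John--Nirenberg with an $A_\infty$-type decay estimate and integrate via the layer-cake formula; for the second, a single H\"older step (or, for $p=1$, the pointwise $\widetilde{A}_{1,\lambda-\frac{1}{2}}$ bound) plus the $\widetilde{A}_{p,\lambda-\frac{1}{2}}$ condition. The one stylistic difference is that you derive the decay $\frac{w(E)}{w(B)}\lesssim\big(\frac{\mu(E)}{\mu(B)}\big)^\delta$ by making explicit the clean reformulation $w\in\widetilde{A}_{p,\lambda-\frac{1}{2}}\Leftrightarrow u:=w\,x^{-2\lambda}\in A_p(d\mu)$ and then appealing to classical $A_\infty(d\mu)$ theory, whereas the paper cites that decay directly from \cite[Corollary~6.6]{MR4737319}; both are valid and the computation that follows is identical.
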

Additionally, in \cite{f348cf10-eb78-3134-8834-84c7e5fe174a} the following  embedding holds
\begin{align}\label{sss}
  \forall 0<s\leq\frac{1}{2},\ \ p>0 \implies  s^{\frac{1}{p}}\cdot\|b\|_{\rm
 BMO_{0,s}(\R_+)} \leq\|b\|_{\rm
 BMO_{L^p(dx)}(\R_+)}\lesssim\|b\|_{\rm
 BMO_{0,s}(\R_+)}.
\end{align}

\begin{lem}\label{sec}
Let $0< p<\infty$ and $w$ be an arbitrary doubling measure, then for all $0<s\leq\frac{1}{2}$,
\[
{\rm BMO}_{0,s} (\mathbb{R_+ },w)\simeq{\rm BMO}_{L^p(w)} (\mathbb{R_+}).
\]    
More precisely,
\[
s^{\frac{1}{p}}\cdot\|b\|_{{\rm BMO}_{0,s} (\mathbb{R_+ },w)}\leq \|b\|_{{\rm BMO}_{L^p(w)} (\mathbb{R_+})}\underset{w,d}{\lesssim}\|b\|_{{\rm BMO}_{0,s} (\mathbb{R_+ },w)}
\]
\end{lem}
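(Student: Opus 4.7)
The easy direction $s^{1/p}\|b\|_{{\rm BMO}_{0,s}(\mathbb{R}_+,w)}\leq\|b\|_{{\rm BMO}_{L^p(w)}(\mathbb{R}_+)}$ follows from Chebyshev's inequality. For any interval $B$ and any $t>0$, choosing $c=b_B$ as a candidate in the infimum defining the ${\rm BMO}_{0,s,w}$ norm gives
\[
w\big(\{x\in B:|b(x)-b_B|>t\}\big)\leq t^{-p}\int_B|b-b_B|^p\,w\,dx\leq t^{-p}w(B)\|b\|_{{\rm BMO}_{L^p(w)}}^p,
\]
which is at most $s\,w(B)$ as soon as $t\geq s^{-1/p}\|b\|_{{\rm BMO}_{L^p(w)}}$. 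Taking infima yields the claim with constant $s^{-1/p}$.

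For the reverse inequality the plan is to run a weighted John--Nirenberg argument adapted to the doubling measure $w$. Writing $\|b\|_*:=\|b\|_{{\rm BMO}_{0,s}(\mathbb{R}_+,w)}$, the definition of ${\rm BMO}_{0,s,w}$ supplies, for every ball $B$, a constant $c_B$ with $w(\{x\in B:|b(x)-c_B|>\|b\|_*\})\leq s\,w(B)$. Using the dyadic system of Section~\ref{sec:dyadic_cubes}, I would perform a Calder\'on--Zygmund stopping-time decomposition of $|b-c_B|\chi_B$, selecting the maximal dyadic sub-intervals on which a suitable $w$-weighted average of $|b-c_B|$ first exceeds a fixed multiple of $\|b\|_*$. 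Doubling of $w$ guarantees that on each selected sub-interval $B_j$ one may attach a new median-type constant $c_{B_j}$ with $|c_{B_j}-c_B|\lesssim\|b\|_*$, and that the total $w$-mass of the selected collection is at most $Cs\,w(B)$. Iterating the selection $k$ times and using $s\leq\tfrac{1}{2}$ delivers
\[
w\big(\{x\in B:|b(x)-c_B|>k\|b\|_*\}\big)\lesssim (Cs)^k w(B),
\]
which exponentiates to $w(\{x\in B:|b-c_B|>\lambda\})\lesssim w(B)\,e^{-c\lambda/\|b\|_*}$. Integrating against $p\lambda^{p-1}\,d\lambda$ yields the oscillation estimate $\big(w(B)^{-1}\int_B|b-c_B|^p w\,dx\big)^{1/p}\lesssim\|b\|_*$.

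It then remains to pass from the median-type constant $c_B$ to the specific $\mu$-average $b_B=\mu(B)^{-1}\int_B b\,d\mu$ appearing in the definition of ${\rm BMO}_{L^p(w)}$. By the triangle inequality this reduces to the estimate
\[
|b_B-c_B|\leq\mu(B)^{-1}\int_B|b-c_B|\,d\mu\lesssim\|b\|_*,
\]
which I would obtain by H\"older's inequality in the conjugate pair $(p,p')$ combined with the $\widetilde{A}_{p,\lambda-\frac{1}{2}}$-type compatibility of $w$ and $\mu$ implicit in the hypothesis that $w$ is a doubling weight on $(\mathbb{R}_+,|\cdot|,\mu)$. I expect the main technical obstacle to be the weighted John--Nirenberg iteration itself: the stopping condition and the geometric-decay bookkeeping must be driven by the doubling of $w$ rather than that of $\mu$, and the assumption $s\leq\tfrac{1}{2}$ is exactly what keeps the iteration convergent and yields the exponential tail needed for all $0<p<\infty$.
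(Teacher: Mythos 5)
Your skeleton matches the paper's: Chebyshev with $c=b_B$ for the easy direction (identical to the paper), and a weighted John--Str\"omberg exponential tail followed by integration against $p\gamma^{p-1}\,d\gamma$ for the hard direction. Two remarks. First, a stopping criterion driven by a ``$w$-weighted average of $|b-c_B|$ exceeding a multiple of $\|b\|_*$'' is not available: the ${\rm BMO}_{0,s}$ hypothesis controls only level sets, not averages, so the stopping time must be median-/rearrangement-based --- precisely the local mean oscillation framework (Propositions~\ref{p1} and~\ref{p2}) that the paper imports from \cite{guo_wu_yang_2021} and \cite{f348cf10-eb78-3134-8834-84c7e5fe174a}. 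You later attach a ``median-type constant $c_{B_j}$,'' so the intent is right, but ``average'' should read ``median.''

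Second, and this is a genuine gap, your final step justifies $|b_B-c_B|\leq\mu(B)^{-1}\int_B|b-c_B|\,d\mu\lesssim\|b\|_*$ by an ``$\widetilde A_{p,\lambda-\frac12}$-type compatibility of $w$ and $\mu$ implicit in the hypothesis that $w$ is a doubling weight.'' It is not implicit: the lemma assumes only that $w$ is doubling, and a general doubling $w$ need not satisfy any $\widetilde A_p$-type relation to $d\mu=x^{2\lambda}dx$, so this bound does not follow from the $w$-level-set decay you have established, which says nothing directly about $\mu$-averages of $|b-c_B|$. For what it is worth, the paper's own proof also leaves this bridge implicit: it proves $w(B)^{-1}\int_B|b-\alpha_b(B)|^p\,w\,dt\lesssim\|b\|_{{\rm BMO}_{0,s}}^p$ with the $w$-median $\alpha_b(B)$ in place of $b_B$ and states the conclusion directly. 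With an $\inf_c$-normalization of ${\rm BMO}_{L^p(w)}$ both proofs close cleanly; with the $b_B$-normalization actually used in the Definition, the passage from $\alpha_b(B)$ or $c_B$ to $b_B$ needs the $\widetilde A_{\infty,\lambda-\frac12}$ input of Theorem~\ref{thm BMO equiv} (via Lemma~\ref{emb}), which is not among the stated hypotheses of the lemma and so should be addressed explicitly rather than attributed to doubling alone.
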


Combine the result in \eqref{sss}, Lemma \ref{emb}, Lemma \ref{sec} and the $\rm BMO$ equivalence lemma in \cite{MR4737319}, we can deduce theorem\ref{thm BMO equiv} directly. To be more specific,
\begin{align*}
    {\rm BMO}_{0,s} (\mathbb{R_+},w)&\underset{Lemma \,\, \ref{sec}}{\simeq}
    \rm BMO_{L^p(w)}(\R_+)
    \underset{Lemma\,\,\ref{emb}}{\simeq}\rm BMO_{\triangle_\lambda}(\R_+)\\
&\underset{Equivalence\,\, Lemma}{\simeq}\rm BMO(\R_+)
    \underset{Classical\,\, Result}{\simeq}
    \rm BMO_{L^p(dx)}(\R_+)
    \underset{(\ref{sss})}{\simeq}\rm
 BMO_{0,s}(\R_+).
\end{align*}

\begin{itemize}
    \item \textbf{Proof of Lemma \ref{emb}}
\end{itemize}
At the beginning of this section, we proved that if  $0< p<\infty$ and $w\in \tilde{A}_{\infty,\lambda-\frac{1}{2}}$, then
\[
{\rm BMO}_{\triangle_\lambda} (\mathbb{R_+})\hookrightarrow{\rm BMO}_{L^p(w)} (\mathbb{R_+}):
\]
By $w\in \tilde{A}_{\infty,\lambda-\frac{1}{2}}$ and  \cite[Corollary 6.6]{MR4737319}, there is $\delta>0$, such that for all $B\subset\R_+$
\begin{align*}
    \frac{w(\{x\in B: |b(x)-b_B|>\gamma\})}{w(B)}&\lesssim\left(\frac{\mu(\{x\in B: |b(x)-b_B|>\gamma\})}{\mu(B)}\right)^{\delta}\\
    &\hskip-1cm\underset{John-Nirenberg} {\lesssim}\exp{\left(\frac{-C\cdot\gamma\cdot\delta}{\|b\|_{\rm BMO_
    {\triangle_\lambda}}}\right)},
\end{align*}
which implies that 
\begin{align*}
   \frac{1}{w(B)}\int_B|b(t)-b_{B}|^p w(t)dt&=\frac{p}{w(B)}\int^\infty_0 \gamma^{p-1} w(\{x\in B: |b(x)-b_B|>\gamma\})d\gamma\\
   &\lesssim p\cdot\int^\infty_0\gamma^{p-1}\exp{\left(\frac{-C\cdot\gamma\cdot\delta}{\|b\|_{\rm BMO_{\triangle_\lambda}}}\right)}d\gamma\\
   &\lesssim \|b\|_{\rm BMO_{\triangle_\lambda}}^p.
\end{align*}
So,
\[
\|b\|_{{\rm BMO}_{L^p(w)}}\lesssim\|b\|_{\rm BMO_{\triangle_\lambda}}.
\]
Now, we show that
if  $1<p<\infty$ and $w\in \tilde{A}_{p,\lambda-\frac{1}{2}}$, then
\[
{\rm BMO}_{L^p(w)} (\mathbb{R_+})\hookrightarrow{\rm BMO}_{\triangle_\lambda} (\mathbb{R_+}):
\]
\begin{itemize}
    \item \textbf{Case (1):} $p>1$
\end{itemize}
By H\"older's inequality,
\begin{align*}
    \int_B |b(x)-b_B|d\mu(x)&\leq\bigg(\int_B|b(x)-b_B|^p w(x)dx \bigg)^{\frac{1}{p}}\cdot\bigg(\int_B w(x)^{1-p'}x^{2\lambda p'}dx\bigg)^{\frac{1}{p'}}\\
    &\hskip-.6cm\underset{w\in \tilde{A}_{p,\lambda-\frac{1}{2}}}{\lesssim}\bigg(\int_B|b(x)-b_B|^p w(x)dx \bigg)^{\frac{1}{p}}\cdot\frac{\mu(B)}{w(B)^{\frac{1}{p}}},
\end{align*}
and hence
\[
\|b\|_{\rm BMO_{\triangle_\lambda}}\lesssim\|b\|_{{\rm BMO}_{L^p(w)}}.
\]
\begin{itemize}
    \item \textbf{Case (2):} $p=1$
\end{itemize}
\begin{align*}
    \int_B |b(x)-b_B|d\mu(x)&=\int_B |b(x)-b_B|\cdot \frac{\mu(x)}{w(x)}w(x)dx\\
    &\leq\int_B|b(x)-b_B| w(x)dx \cdot\sup_{x\in B}\left(\frac{\mu(x)}{w(x)}\right)^{-1}\\
    &\hskip-.6cm\underset{w\in \tilde{A}_{1,\lambda-\frac{1}{2}}}{\lesssim}\int_B|b(x)-b_B| w(x)dx \cdot\frac{\mu(B)}{w(B)}.
\end{align*}
Therefore,
\[
\|b\|_{\rm BMO_{\triangle_\lambda}}\lesssim\|b\|_{{\rm BMO}_{L^1(w)}}.
\]
\begin{itemize}
    \item \textbf{Proof of Lemma \ref{sec}}
\end{itemize}
Now, we show that
for $0< p<\infty$ and $w\in \tilde{A}_{\infty,\lambda-\frac{1}{2}}$, then for all $0<s\leq\frac{1}{2}$
\[
{\rm BMO}_{0,s} (\mathbb{R_+ },w)\hookrightarrow{\rm BMO}_{L^p(w)} (\mathbb{R_+}):
\]    
Note that John--Str\"omberg's inequality in ${\rm BMO}_{0,s} (\mathbb{R_+ },w)$ is the following
\begin{align}\label{jsi}
w(\{x\in B: |b(x)-\alpha_b (B)|>\gamma\})\underset{w,d}{\lesssim}\exp{\left(\frac{-c\gamma}{\|b\|_{\rm BMO_{0,s}(\R_+,w)}}\right)}\cdot w(B),
\end{align}
where $\alpha_b (B)$ is the median value of $b$ on $B$, that is the number such that
$$
w(\{x\in B: b(x)>\alpha_b (B)\})\leq\frac{1}{2}w(B),
$$
and
$$
w(\{x\in B: b(x)<\alpha_b (B)\})\leq\frac{1}{2}w(B).
$$
In \cite{f348cf10-eb78-3134-8834-84c7e5fe174a} the author showed that John--Str\"omberg's inequality in the setting of ${\rm BMO}_{0,s} (\mathbb{R_+ })$. But if we consider $w$ to be doubling, then we would have the parallel John--Str\"omberg's inequality in ${\rm BMO}_{0,s} (\mathbb{R_+ },w)$. Their idea is based on the language of the  rearrangement. In the doubling measure setting, we can use the approach from \cite{f348cf10-eb78-3134-8834-84c7e5fe174a} and \cite{guo_wu_yang_2021} (Lemma $2.5$ and Proposition $3.2$).
\begin{defn}[Non-increasing Rearrangements for Doubling Measure]~\\
Given a real-valued measurable function $b$, the non-increasing rearrangements for doubling measure $w$ is defined by
$$
b^{*}(t):=\inf\{\gamma>0:w(x:\{|b(x)|>\gamma\})<t\},\quad\forall t\in\R_+ .
$$  
\end{defn}
\begin{defn}[Local Mean Oscillation for Doubling Measure]~\\
For a real-valued measurable function $b$, we defined the local mean oscillation for doubling measure $w$ of $b$ over a cube $B$ by
$$
\check{a}_{\lambda}(b;B):=\underset{c\in\R}{\inf}((b-c)\chi_B)^{*}(\lambda w(B)),\quad \forall \lambda\in(0,1).
$$
We also define
$$
a_{\lambda}(b;B):=\underset{c\in\R}{\inf}((f-\alpha_b (B))\chi_B)^{*}(\lambda w(B)),\quad \forall \lambda\in(0,1).
$$
\end{defn}

In the language of local mean oscillation, the median value plays the same role as the average in the language of the usual $\rm BMO$ norm. (\cite[Lemma 2.5]{guo_wu_yang_2021})
\begin{prop}\label{p1}
$$\check{a}_{\lambda}(b;B)
    \leq a_{\lambda}(b;B)\leq2 \check{a}_{\lambda}(b;B).$$
\end{prop}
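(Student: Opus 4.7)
The plan is to establish the two inequalities separately. The first, $\check{a}_{\lambda}(b;B) \leq a_{\lambda}(b;B)$, is immediate from the definitions: $\check{a}_{\lambda}(b;B)$ is an infimum over all $c \in \R$ of $((b-c)\chi_B)^{*}(\lambda w(B))$, whereas $a_{\lambda}(b;B)$ corresponds to the specific choice $c = \alpha_b(B)$ (the infimum over $c$ displayed in the definition of $a_\lambda$ is vacuous since the integrand does not depend on $c$; we read this as $a_{\lambda}(b;B) = ((b - \alpha_b(B))\chi_B)^{*}(\lambda w(B))$). For the nontrivial inequality $a_{\lambda}(b;B) \leq 2\check{a}_{\lambda}(b;B)$, the strategy is a triangle-inequality argument whose key ingredient is a \emph{median-proximity lemma}: any constant $c$ that nearly realizes the infimum in $\check{a}_{\lambda}$ must lie within distance at most the associated rearrangement value of the median $\alpha_b(B)$. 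The range $0 < \lambda \leq 1/2$ (implicit from the ambient context $0 < s \leq 1/2$) is essential here.

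In detail, fix $\epsilon > 0$ and pick $c \in \R$ so that $t := ((b-c)\chi_B)^{*}(\lambda w(B)) < \check{a}_{\lambda}(b;B) + \epsilon$. From the definition of the rearrangement, $w(\{x \in B : |b(x) - c| > t\}) < \lambda w(B) \leq \tfrac{1}{2} w(B)$. Suppose toward a contradiction that $\alpha_b(B) > c + t$; then $\{b \geq \alpha_b(B)\} \cap B \subset \{b - c > t\} \cap B$, giving $w(\{b \geq \alpha_b(B)\} \cap B) < \tfrac{1}{2} w(B)$. But the median property $w(\{b < \alpha_b(B)\} \cap B) \leq \tfrac{1}{2} w(B)$ forces $w(\{b \geq \alpha_b(B)\} \cap B) \geq \tfrac{1}{2} w(B)$, a contradiction. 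A symmetric argument rules out $\alpha_b(B) < c - t$, so $|c - \alpha_b(B)| \leq t$.

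Combining this with the triangle inequality gives $|b(x) - \alpha_b(B)| \leq |b(x) - c| + t$, so $w(\{|b - \alpha_b(B)| > 2t\} \cap B) \leq w(\{|b - c| > t\} \cap B) < \lambda w(B)$. Hence $((b - \alpha_b(B))\chi_B)^{*}(\lambda w(B)) \leq 2t$, which yields $a_{\lambda}(b;B) \leq 2t < 2(\check{a}_{\lambda}(b;B) + \epsilon)$; letting $\epsilon \to 0$ gives the desired bound. The only real subtlety, and arguably the main obstacle, lies in the median-proximity step: it depends crucially on both defining inequalities for the median together with the restriction $\lambda \leq 1/2$, and if either is relaxed the factor of $2$ cannot be guaranteed.
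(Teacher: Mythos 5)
The paper supplies no proof of this proposition; it is cited directly as Lemma~2.5 of the reference guo\_wu\_yang\_2021, so there is no in-text argument of the paper's to compare against. Your argument is the standard one and is essentially correct: the first inequality is immediate once one reads $a_\lambda(b;B)=((b-\alpha_b(B))\chi_B)^{*}(\lambda w(B))$ (the displayed $\inf_c$ in the paper's definition of $a_\lambda$ is indeed vacuous, and the $f$ there is a typo for $b$), and the second inequality correctly hinges on the median-proximity bound $|c-\alpha_b(B)|\le t$ combined with the triangle inequality.

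One technical point merits correction. You assert $w(\{x\in B:|b(x)-c|>t\})<\lambda w(B)$ at $\gamma=t$ exactly, but the rearrangement is an infimum and the distribution function need not fall strictly below $\lambda w(B)$ at the infimal level; what is automatic there is only $\le\lambda w(B)$. For $\lambda<\tfrac12$ this is harmless, since $\lambda w(B)<\tfrac12 w(B)$ already gives the strict inequality you later use against the median; but at the boundary case $\lambda=\tfrac12$ the strictness is not automatic at $\gamma=t$ and the contradiction collapses to an equality. The fix is available inside your own contradiction hypothesis: if $\alpha_b(B)>c+t$, choose $\gamma$ with $t<\gamma<\alpha_b(B)-c$. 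Since the distribution function $\gamma\mapsto w(\{|b-c|>\gamma\}\cap B)$ is nonincreasing, the set of admissible $\gamma$ in the rearrangement infimum is a ray to the right of $t$, hence $w(\{|b-c|>\gamma\}\cap B)<\lambda w(B)$ holds for this $\gamma$. Then $\{|b-c|\le\gamma\}\cap B\subset\{b<\alpha_b(B)\}\cap B$ yields
$$w(\{b<\alpha_b(B)\}\cap B)\ge w(\{|b-c|\le\gamma\}\cap B)>(1-\lambda)w(B)\ge\tfrac12 w(B),$$
which contradicts the median property. (The symmetric case $\alpha_b(B)<c-t$ is handled identically using the other median inequality.) With this adjustment the median-proximity step, and hence the whole argument, is complete and valid on the full range $0<\lambda\le\tfrac12$.
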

The doubling condition gives us the estimate of the local mean oscillation. (\cite[Proposition 3.2]{guo_wu_yang_2021})
\begin{prop}\label{p2}
Let $0<\lambda\leq\frac{1}{2}$ and $B_{\varepsilon}\approx B$ be two balls with $w(B_{\varepsilon})=(1+\varepsilon)w(B)$ or   $w(B_{\varepsilon})=(1-\varepsilon)w(B)$. Then
$$
|\alpha_{b}(B_{\varepsilon})-\alpha_{b}(B)|\leq a_{\lambda}(b;B). 
$$
\end{prop}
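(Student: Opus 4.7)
The plan is to argue by contradiction, using the defining inequalities of the median together with the non-increasing rearrangement bound built into $a_{\lambda}(b;B)$. Abbreviate $\alpha := \alpha_{b}(B)$ and $\alpha_{\varepsilon} := \alpha_{b}(B_{\varepsilon})$. By replacing $b$ with $-b$ if necessary, it suffices to rule out $\alpha_{\varepsilon} - \alpha > a_{\lambda}(b;B)$. The two hypothesized relations $w(B_{\varepsilon}) = (1 \pm \varepsilon)\,w(B)$ are handled symmetrically, so I focus on the case $w(B_{\varepsilon}) = (1+\varepsilon)\,w(B)$, in which the geometric setup (concentric balls with slightly larger radius) gives $B \subseteq B_{\varepsilon}$ with $w(B_{\varepsilon}\setminus B) = \varepsilon\,w(B)$.

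From the definition of $a_{\lambda}(b;B)$ as a distributional quantile at level $\lambda\,w(B)$, I obtain
\[
w(\{x \in B : b(x) > \alpha + a_{\lambda}(b;B)\}) \leq \lambda\,w(B),
\]
while the median property on the larger ball gives
\[
w(\{x \in B_{\varepsilon} : b(x) \geq \alpha_{\varepsilon}\}) \geq \tfrac{1}{2} w(B_{\varepsilon}) = \tfrac{1+\varepsilon}{2}\,w(B).
\]
Under the hypothesis $\alpha_{\varepsilon} > \alpha + a_{\lambda}(b;B)$, the inclusion $\{b \geq \alpha_{\varepsilon}\} \subseteq \{b > \alpha + a_{\lambda}(b;B)\}$ holds, so splitting the superlevel set on $B_{\varepsilon}$ into its portions inside $B$ and inside $B_{\varepsilon}\setminus B$ yields
\[
\tfrac{1+\varepsilon}{2}\,w(B) \leq w(\{x \in B : b(x) > \alpha + a_{\lambda}(b;B)\}) + w(B_{\varepsilon}\setminus B) \leq (\lambda + \varepsilon)\,w(B).
\]
Rearranging produces $\lambda \geq (1-\varepsilon)/2$, which contradicts $\lambda \leq 1/2$ once $\varepsilon$ is sufficiently small, completing the argument.

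I expect the main obstacle to be the borderline regime, i.e.\ $\lambda$ close to $1/2$ or $\varepsilon$ not small compared with $1 - 2\lambda$, where the bare containment scheme returns a tight but non-contradictory inequality. To close this gap I intend to exploit the fact that the median is generally non-unique (it is any point in a closed interval determined by the distribution of $b$), allowing me to push $\alpha$ and $\alpha_{\varepsilon}$ to opposite ends of their respective median sets, and to upgrade the $\leq \lambda\,w(B)$ bound on the closed superlevel set to the corresponding strict bound on the open superlevel set $\{b > \alpha + a_{\lambda}(b;B)\}$ coming from the definition of the non-increasing rearrangement via $\inf$. A secondary, mostly bookkeeping point is to confirm that the comparability relation $B_{\varepsilon} \approx B$ does yield one of the inclusions $B \subseteq B_{\varepsilon}$ or $B_{\varepsilon} \subseteq B$; when both balls are concentric (the natural setting in the companion Lemma \ref{sec}), this follows by monotonicity in the radius, and the doubling of $w$ guarantees the relevant measure ratios remain controlled.
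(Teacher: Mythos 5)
The paper does not give its own proof of this proposition: it imports it verbatim from \cite[Proposition 3.2]{guo_wu_yang_2021}, so there is no internal proof to compare your argument against. Reviewing your attempt on its own merits, the main line is the standard one: the reduction (via $b\mapsto -b$) to ruling out $\alpha_\varepsilon-\alpha>a_\lambda(b;B)$, the bound $w(\{x\in B:b(x)>\alpha+a_\lambda(b;B)\})\le\lambda\,w(B)$ obtained from right-continuity of $\gamma\mapsto w(\{|b-\alpha|>\gamma\})$, and the median lower bound $w(\{x\in B_\varepsilon:b\ge\alpha_\varepsilon\})\ge\tfrac12 w(B_\varepsilon)$ are all correctly deployed, and the resulting inequality $\tfrac{1\pm\varepsilon}{2}w(B)\le(\lambda+\varepsilon)w(B)$ indeed yields the contradiction precisely when $\varepsilon<1-2\lambda$.

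The gap you flag at the end is, however, genuine, and the remedies you propose will not close it. The rearrangement $g^*(t)=\inf\{\gamma>0:w(\{|g|>\gamma\})<t\}$ gives strict inequality only for $\gamma$ strictly above the infimum; at $\gamma=a_\lambda(b;B)$ right-continuity yields only $w(\{|b-\alpha|>a_\lambda(b;B)\})\le\lambda w(B)$, and equality can genuinely occur, so there is no ``upgrade to strict'' available. Non-uniqueness of the median also cannot help, since the estimate must hold for every admissible pair $\alpha_b(B),\alpha_b(B_\varepsilon)$, with $a_\lambda(b;B)$ pinned to the chosen $\alpha_b(B)$. In fact the conclusion fails for $\lambda=\tfrac12$ once $\varepsilon$ is not small enough: normalise $w(B)=1$, let $b=0$ on a subset of $B$ of $w$-measure $\tfrac12+\delta$ and $b=1$ on the remaining $\tfrac12-\delta$, and set $b$ equal to a large constant on $B_\varepsilon\setminus B$. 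Then $\alpha_b(B)=0$ is the unique median, $a_{1/2}(b;B)=0$, yet for any $\varepsilon>2\delta$ the unique median of $b$ on $B_\varepsilon$ is $1$, giving $|\alpha_b(B_\varepsilon)-\alpha_b(B)|=1>0=a_{1/2}(b;B)$; taking $\delta$ small defeats every fixed $\varepsilon>0$ at $\lambda=\tfrac12$. So your containment argument already captures the true constraint: the statement must carry a smallness hypothesis such as $\varepsilon<1-2\lambda$ (presumably present in the cited source, and dropped in the paper's restatement), under which your first computation closes the proof without the auxiliary devices.
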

With the aid of Propositions \ref{p1} and \ref{p2}, and follow the steps in \cite{f348cf10-eb78-3134-8834-84c7e5fe174a}(Section $3$), we have   \eqref{jsi}, John--Str\"omberg's inequality in ${\rm BMO}_{0,s} (\mathbb{R_+ },w)$. 

Back to the proof of $\rm BMO$ equivalence.  Let $b\in{\rm BMO}_{0,s}(\R_+,w) $, then we have
\begin{align*}
   \frac{1}{w(B)}\int_B|b(t)-\alpha_b (B)|^p w(t)dt&=\frac{p}{w(B)}\int^\infty_0 \gamma^{p-1} w(\{x\in B: |b(x)-\alpha_b (B)|>\gamma\})d\gamma\\
&\hskip-.1cm\underset{w,d}{\lesssim} p\cdot\int^\infty_0\gamma^{p-1}\exp{\left(\frac{-c\gamma}{\|b\|_{\rm BMO_{0,s}(\R_+,w)}}\right)}d\gamma\\
   &\lesssim \|b\|^p_{\rm BMO_{0,s}(\R_+,w)},
\end{align*}
that is $$
\|b\|_{{\rm BMO}_{L^p(w)} (\mathbb{R_+})}\underset{d}{\lesssim}\|b\|_{{\rm BMO}_{0,s} (\mathbb{R_+ },w)}.
$$
On the other hand,
\begin{align*}
w(\{x \in B:|b(x)-c|>s^{\frac{-1}{p}}\|b\|_{{\rm BMO}_{L^p(w)} (\mathbb{R_+})}\})&\leq s\cdot{\|b\|^{-p}_{{\rm BMO}_{L^p(w)} (\mathbb{R_+})}}\cdot\int_B |b(x)-c|^p w(x)dx\\
&\leq s\cdot w(B),
\end{align*}
which is equivalent to 
$$
s^{\frac{1}{p}}\cdot\|b\|_{{\rm BMO}_{0,s} (\mathbb{R_+ },w)}\leq \|b\|_{{\rm BMO}_{L^p(w)} (\mathbb{R_+})}.
$$

\section{Endpoint Estimate for Commutator $[b,R_\lambda]$}
In this section, we follow the idea from \cite{lerner} and \cite{10.1112/blms/bdv090}.   The Orlicz norm here is on the space of homogeneous type $(\R_+,d,\mu)$, where $\mu (t):=t^{2\lambda}$ is a doubling measure. Assume $\psi$ is a Young function such that
\[
\psi(4t)\leq\gamma_{\psi}\cdot\psi(t),\quad\forall t>0, \gamma_{\psi}\geq 1.
\]
Define the level set corresponding to the sparse family and the Young function $\psi$ by
\[
S_k:=\big\{Q\in \mathcal{S}: 4^{-k-1}<\|f\|_{\psi,Q}\leq 4
^{-k}\big\},\quad k\in \mathbb{N}.
\]
\begin{lem}\label{keylem}
Suppose $S_k$ is $(1-\frac{1}{2\gamma_{\psi}})$-sparse family of dyadic cubes. Let $w$ be a non-negative and locally integrable function and $E$ be such that $w(E)<\infty$, then for all Young function $\phi$,
\begin{align*}
    \int_E \sum_{Q\in S_k} w(x)dx\leq 2^k w(E)+\frac{4\gamma_\psi}{{\overline{\phi}^{-1}\big((2\gamma_\psi)^{2^k}\big)}}\cdot\int \psi(4^k|f|)(x) M_{\phi}\left(\frac{w}{\mu}\right)(x)d\mu(x).
\end{align*}
\end{lem}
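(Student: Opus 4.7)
The plan is to interpret the left-hand side as $\int_E N(x)\,w(x)\,dx$ with $N(x):=\sum_{Q\in S_k}\chi_Q(x)$, the multiplicity function of the family $S_k$, and then to split on the super-level set $\Omega_k:=\{x:N(x)>2^k\}$. On $\Omega_k^c$ the multiplicity is at most $2^k$, which yields the elementary bound $\int_{E\setminus\Omega_k}N\,w\,dx\leq 2^k w(E)$ and accounts for the first term on the right; the whole content of the lemma is then the estimate of $\int_{E\cap\Omega_k}N\,w\,dx$ by the second term.

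To do so I first extract the sparseness as an exponential decay of $N$. Organise $S_k$ into principal generations $\mathcal{P}_0\subset\mathcal{P}_1\subset\cdots$, where $\mathcal{P}_0$ consists of the maximal cubes of $S_k$ and $\mathcal{P}_{n+1}$ of the maximal cubes of $S_k$ strictly contained in those of $\mathcal{P}_n$. The $(1-\tfrac{1}{2\gamma_{\psi}})$-sparseness, together with the disjointness of the distinguished subsets $E_Q\subset Q$, gives inductively
\[
\mu\bigl(\{N>n\}\cap R\bigr)\leq(2\gamma_{\psi})^{-n}\mu(R),\qquad n\geq 0,
\]
on every top cube $R\in\mathcal{P}_0$; in particular $\mu(\Omega_k\cap R)\leq(2\gamma_{\psi})^{-2^k}\mu(R)$.

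Next I apply the generalised Hölder inequality for Orlicz norms with the complementary pair $(\overline{\phi},\phi)$ on each $R$:
\[
w(\Omega_k\cap R)=\int_R\chi_{\Omega_k}\,\frac{w}{\mu}\,d\mu\leq 2\mu(R)\,\|\chi_{\Omega_k}\|_{\overline{\phi},R}\,\|w/\mu\|_{\phi,R}.
\]
The identity $\|\chi_A\|_{\overline{\phi},Q}=1/\overline{\phi}^{-1}(\mu(Q)/\mu(A\cap Q))$ together with the decay above turns the prefactor into $2\mu(R)/\overline{\phi}^{-1}((2\gamma_{\psi})^{2^k})$, while $\|w/\mu\|_{\phi,R}\leq M_{\phi}(w/\mu)(x)$ for every $x\in R$ lets me absorb the maximal operator inside an integral over $R$. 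Because $R\in S_k$ forces $\|f\|_{\psi,R}>4^{-k-1}$, the doubling $\psi(4t)\leq\gamma_{\psi}\psi(t)$ converts the Orlicz lower bound into the mass comparison
\[
\mu(R)\leq\gamma_{\psi}\int_R\psi(4^k|f|)\,d\mu,
\]
so summing over the disjoint top cubes produces
\[
w(\Omega_k)\leq\frac{2\gamma_{\psi}}{\overline{\phi}^{-1}\bigl((2\gamma_{\psi})^{2^k}\bigr)}\int\psi(4^k|f|)\,M_{\phi}(w/\mu)\,d\mu.
\]

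To upgrade the estimate from $w(\Omega_k)$ to $\int_{\Omega_k}N\,w\,dx$ I iterate the Hölder-Orlicz step at every deeper level using the shifted decay $\mu(\{N>2^k+j\}\cap R)\leq(2\gamma_{\psi})^{-(2^k+j)}\mu(R)$, and assemble the contributions via the layer-cake identity
\[
\int_{\Omega_k}(N-2^k)\,w\,dx=\sum_{j\geq 1}w\bigl(\{N\geq 2^k+j\}\bigr).
\]
The main obstacle is precisely the convergence of the resulting series: controlling the tail $\sum_{j\geq 0}1/\overline{\phi}^{-1}((2\gamma_{\psi})^{2^k+j})$ by a fixed multiple of $1/\overline{\phi}^{-1}((2\gamma_{\psi})^{2^k})$ requires a careful use of the super-linear growth of $\overline{\phi}$ relative to the exponential growth of the arguments $(2\gamma_{\psi})^{2^k+j}$. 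Once this geometric summation is executed, all the constants collapse into the stated factor $4\gamma_{\psi}/\overline{\phi}^{-1}((2\gamma_{\psi})^{2^k})$ and the proof is complete.
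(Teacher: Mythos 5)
Your reformulation in terms of the multiplicity function $N(x):=\sum_{Q\in S_k}\chi_Q(x)$ and the split at level $2^k$ is a sensible repackaging of the paper's layer decomposition: the bound $\int_{E\cap\{N\le 2^k\}}N\,w\,dx\le 2^k w(E)$ recovers the first term, the sparseness decay $\mu(\{N>n\}\cap R)\le(2\gamma_\psi)^{-n}\mu(R)$ is correct, and the Orlicz--H\"older step combined with $\mu(R)\le\gamma_\psi\int_R\psi(4^k|f|)\,d\mu$ does produce the correct bound for $w(\Omega_k)$ itself after summing over the disjoint top cubes $R\in\mathcal P_0$.

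The gap is in the final upgrade from $w(\Omega_k)$ to $\int_{\Omega_k}(N-2^k)\,w\,dx$. You propose to bound $w(\{N\ge 2^k+j\})$ separately for each $j\ge 1$, which gives a factor $1/\overline\phi^{-1}\bigl((2\gamma_\psi)^{2^k+j}\bigr)$, and then to sum in $j$. But the required tail bound
\[
\sum_{j\ge 0}\frac{1}{\overline\phi^{-1}\bigl((2\gamma_\psi)^{2^k+j}\bigr)}\ \lesssim\ \frac{1}{\overline\phi^{-1}\bigl((2\gamma_\psi)^{2^k}\bigr)}
\]
is simply false for the Young functions the lemma is designed for. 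Take $\phi(t)=t\log(e+t)$ (the $L\log L$ case the paper actually applies this to), for which $\overline\phi(t)\sim e^t$ and hence $\overline\phi^{-1}(t)\sim\log t$: the left side becomes $\sum_j \frac{c}{2^k+j}$, a divergent harmonic series, while the right side is finite. Faster growth of $\overline\phi$ makes $\overline\phi^{-1}$ grow \emph{more} slowly, so invoking ``super-linear growth of $\overline\phi$'' runs in exactly the wrong direction; the exponential growth of the arguments $(2\gamma_\psi)^{2^k+j}$ only produces linear growth in $j$ after applying a logarithm, not the geometric growth needed for summability.

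The paper avoids this summation entirely. It applies the Orlicz--H\"older step at \emph{every} cube $Q\in S_{k,v}$ (not just the top generation), but with the $\psi$-mass comparison taken over the fringe set $E_Q:=Q\setminus\bigcup_{Q'\in S_{k,v+1}}Q'$ rather than over all of $Q$: the Claim $\frac{2\gamma_\psi}{\mu(Q)}\int_{E_Q}\psi(4^k|f|)\,d\mu\ge 1$ and the uniform bound $\mu(Q_u)\le(2\gamma_\psi)^{-2^k}\mu(Q)$ yield
\[
w(E\cap Q_u)\ \le\ \frac{4\gamma_\psi}{\overline\phi^{-1}\bigl((2\gamma_\psi)^{2^k}\bigr)}\int_{E_Q}\psi(4^k|f|)\,M_\phi\!\left(\tfrac{w}{\mu}\right)d\mu
\]
with a coefficient \emph{independent of the generation of $Q$}. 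Because the sets $E_Q$ are pairwise disjoint over all $Q\in S_k$, summing over $Q$ produces a single integral rather than a sum of global integrals, and no series over depths ever appears. This disjointness localisation is the ingredient your proposal is missing; without it the approach as written does not close.
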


\begin{proof}
We decompose the sparse family of cubes layer by layer.
\begin{itemize}
    \item \textbf{Layer Decomposition}:
\end{itemize}
Write $\mathcal{S}_k$ as the union of $\mathcal{S}_{k,v}$, for $v=0,1,\cdots$, where $\mathcal{S}_{k,0}$ are the maximal elements of $\mathcal{S}_k,$\,and $\mathcal{S}_{k,v+1}$ are the maximal elements of $\mathcal{S}_k\setminus\bigcup_{l=0}^v\mathcal{S}_{k,l}.$
\begin{itemize}
    \item \textbf{Pairwise Disjoint Subset}:
\end{itemize}
Define that
\[
E_Q:=Q\setminus\bigcup_{Q'\in\mathcal{S}_{k,v+1}}Q',\quad\forall Q\in\mathcal{S}_{k,v}.
\]
Note that $\{E_Q\}\overset{disjoint}{\subset}\mathcal{S}_k$.
Set $u:=2^k$ and for each $Q\in\mathcal{S}_{k,v}$, we decompose the set $E\cap Q$ into
\[
E\cap Q=E\cap\left(\underset{Bottom~ Part}{Q_u}\cup\underset{Layer~Part}{\bigcup_{l=0}^{u-1}\underset{{Q^\prime\in\mathcal{S}_{k,v+l},\subset Q}}{\bigcup} E_{Q^\prime}}\right),
\]
and hence
\begin{align}
\int_E \sum_{Q\in S_k} w(x)dx
&=\sum_{v=0}^{\infty}\sum_{Q\in\mathcal{S}_{k,v}} w(E\cap Q) \notag \\
&=\underset{Bottom~Part}{\sum_{v=0}^{\infty}\sum_{Q\in\mathcal{S}_{k,v}} w(E\cap Q_u)}
+\underset{Layer~Part}{\sum_{v=0}^{\infty}\sum_{Q\in\mathcal{S}_{k,v}}\sum_{l=0}^{u-1}\sum_{Q^\prime\in\mathcal{S}_{k,v+l},\subset Q} w(E_{Q'}\cap E)}.\notag
\end{align}
\begin{itemize}
    \item \textbf{Claim}:\[\forall Q\in S_{k,v}\implies 
    \frac{2\gamma_{\psi}}{\mu(Q)}\int_{E_Q}\psi(4^{k}|f|)(x)d\mu(x)\geq 1.
    \]
\end{itemize}
\begin{itemize}
    \item \textbf{Proof of Claim}:
    For each $Q\in S_{k,v}$, we have
    \[
    1<\frac{1}{\mu(Q)}\int_Q \psi(4^{k+1}|f(x)|)d\mu(x)\leq \frac{\gamma_\psi}{\mu(Q)}\int_Q \psi(4^{k}|f(x)|)d\mu(x).
    \]
    The sparseness implies that 
    \begin{align*}
      &\frac{1}{\mu(Q)}\int_Q \psi(4^{k}|f(x)|)d\mu(x)\\&=  \frac{1}{\mu(Q)}\int_{E_Q} \psi(4^{k}|f(x)|)d\mu(x)+\frac{1}{\mu(Q)}\sum_{Q'\in S_{k,v+1}}\int_{Q'} \psi(4^{k}|f(x)|)d\mu(x)\\
      &\leq\frac{1}{\mu(Q)}\int_{E_Q} \psi(4^{k}|f(x)|)d\mu(x)+\frac{1}{\mu(Q)}\sum_{Q'\in S_{k,v+1}}\mu(Q')\\
      &\leq\frac{1}{\mu(Q)}\int_{E_Q} \psi(4^{k}|f(x)|)d\mu(x)+\frac{1}{\mu(Q)}\mu(Q-E_Q)\\
      &\leq\frac{1}{\mu(Q)}\int_{E_Q} \psi(4^{k}|f(x)|)d\mu(x)+\frac{1}{2\gamma_\psi}.
    \end{align*}
    Thus, the claim holds.
\end{itemize}
\begin{itemize}
    \item \textbf{Bottom Part}:
\end{itemize}
\begin{align*}
\sum_{v=0}^{\infty}\sum_{Q\in\mathcal{S}_{k,v}} w(E\cap Q_u)
&\underset{Claim}{\leq}\sum_{v=0}^{\infty}\sum_{Q\in\mathcal{S}_{k,v}}w(E\cap Q_u)\cdot \frac{2\gamma_{\psi}}{\mu(Q)}\int_{E_Q}\psi(4^{k}|f|)(x)d\mu(x) \\
&\ \ =\sum_{v=0}^{\infty}\sum_{Q\in\mathcal{S}_{k,v}}2\gamma_{\psi}\cdot \frac{w(E\cap Q_u)}{\mu(Q)}\int_{E_Q}\psi(4^{k}|f|)(x)d\mu(x)\\
&\underset{Holder}{\leq}\sum_{v=0}^{\infty}\sum_{Q\in\mathcal{S}_{k,v}}4\gamma_{\psi}\left\|\frac{w}{\mu}\right\|_{\phi,Q}\cdot\left\|\chi_{Q_u}\right\|_{\overline{\phi},Q}\int_{E_Q}\psi(4^{k}|f|)(x)d\mu(x)\\
&\ \ =\sum_{v=0}^{\infty}\sum_{Q\in\mathcal{S}_{k,v}}4\gamma_{\psi}\left\|\frac{w}{\mu}\right\|_{\phi,Q}\frac{1}{\overline{\phi}^{-1}\left(\frac{\mu(Q)}{\mu(Q_u)}\right)}\int_{E_Q}\psi(4^{k}|f|)(x)d\mu(x)\\
&\hskip-.35cm\underset{sparseness}{\leq}\sum_{v=0}^{\infty}\sum_{Q\in\mathcal{S}_{k,v}}\frac{4\gamma_{\psi}}{{\overline{\phi}^{-1}((2\gamma_\psi)^{2^k})}}\int_{E_Q}\psi(4^{k}|f|)(x)M_{\phi}\left(\frac{w}{\mu}\right)(x)d\mu(x)\\
&\ \ \leq \frac{4\gamma_\psi}{{\overline{\phi}^{-1}((2\gamma_\psi)^{2^k})}}\cdot\int \psi(4^k|f|)(x) M_{\phi}\left(\frac{w}{\mu}\right)(x)d\mu(x).
\end{align*}
\begin{itemize}
    \item \textbf{Layer Part}:
\end{itemize}
\begin{align}
\sum_{v=0}^{\infty}\sum_{Q\in\mathcal{S}_{k,v}}\sum_{l=0}^{u-1}\sum_{Q^\prime\in\mathcal{S}_{k,v+l},\subset Q} w(E_{Q'}\cap E)
 &\leq u\sum_{v=0}^{\infty}\sum_{Q\in\mathcal{S}_{k,v}}w(E_{Q}\cap E)\notag\\
 &\leq 2^{k}w(E).\notag
\end{align}
Thus, we complete this lemma.
\end{proof}
\begin{lem}\label{Lemmalog}
Suppose $S$ is $\frac{31}{32}$-sparse family of dyadic cubes. Let $\phi$ be a Young function such that
\begin{align*}
    K_\phi :=\sum^{\infty}_{k=1}\frac{k}{{\overline{\phi}^{-1}((32)^{2^k})}}<\infty.
\end{align*}
Then for all $t>0$,
\begin{align*}
    w(\{x\in\R_+:|\mathcal{A}_{\mathcal{S},L\log L}(f)(x)|>t\})\lesssim K_\phi \cdot\int_{\R_+}\Phi\bigg(\frac{|f(x)|}{t}\bigg)M_{\phi}\left(\frac{w}{\mu}\right)(x)d\mu(x),
\end{align*}
where $\Phi:=x\log(e+x).$
\end{lem}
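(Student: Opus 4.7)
The plan is to apply Lemma \ref{keylem} with the choice $\psi=\Phi(x)=x\log(e+x)$, so that $\gamma_\psi=16$ (coming from $\Phi(4x)\le 16\,\Phi(x)$) and $(2\gamma_\psi)^{2^k}=32^{2^k}$, which matches the definition of $K_\phi$. By the $1$-homogeneity of the Luxemburg norm $\|\cdot\|_{\Phi,Q}$, $\mathcal{A}_{\mathcal{S},L\log L}(f/t)=\mathcal{A}_{\mathcal{S},L\log L}(f)/t$, so setting $g:=f/t$ reduces the problem to proving
\[
w\big(\{\mathcal{A}_{\mathcal{S},L\log L}(g)>1\}\big)\lesssim K_\phi \int_{\R_+}\Phi(|g|)\,M_\phi(w/\mu)\,d\mu.
\]
Slicing $\mathcal{S}$ into the strata $S_k=\{Q\in\mathcal{S}:4^{-k-1}<\|g\|_{\Phi,Q}\le 4^{-k}\}$ introduced before Lemma \ref{keylem} and writing $N_k:=\sum_{Q\in S_k}\chi_Q$, we obtain the pointwise bound $\mathcal{A}_{\mathcal{S},L\log L}(g)(x)\le \sum_{k\ge 1}4^{-k}N_k(x)$, with cubes where $\|g\|_{\Phi,Q}>1$ absorbed into a trivial stratum handled via sparseness.

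Next, I would use a pigeonhole / extremal principle: pick a sequence $\{\alpha_k\}$ with $\sum_k\alpha_k\le 1$ so that $E:=\{\mathcal{A}_{\mathcal{S},L\log L}(g)>1\}\subset\bigcup_{k\ge 1}E_k$ with $E_k:=\{x:4^{-k}N_k(x)>\alpha_k\}$. On each $E_k$, the weighted Chebyshev inequality combined with Lemma \ref{keylem} yields
\[
4^k\alpha_k\cdot w(E_k)\le\int_{E_k}N_k\,w\,dx\le 2^k w(E_k)+\frac{4\gamma_\Phi}{\overline{\phi}^{-1}(32^{2^k})}\int_{\R_+}\Phi(4^k|g|)\,M_\phi(w/\mu)\,d\mu,
\]
and a careful choice of $\alpha_k$ (making the margin $4^k\alpha_k-2^k$ comparable to $4^k\alpha_k$) permits absorption of the first right-hand term into the left. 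The elementary estimate $\Phi(4^k x)\le C(k+1)\,4^k\,\Phi(x)$, which follows from $\log(e+4^k x)\le (k\log 4+1)\log(e+x)$, then converts the remainder into $(k+1)4^k\int\Phi(|g|)M_\phi(w/\mu)d\mu$; summing over $k\ge 1$ recognizes $K_\phi$ and closes the argument.

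The main technical obstacle will be the joint calibration of the threshold sequence $\{\alpha_k\}$: it must keep $\sum_k\alpha_k$ bounded so that $\bigcup E_k$ covers $E$ (possibly after an overall rescaling of $t$ via homogeneity), leave a sufficient absorption margin against the factor $2^k$ in Lemma \ref{keylem}, and produce the per-$k$ summand $k/\overline{\phi}^{-1}(32^{2^k})$ rather than a larger one after accounting for the $(k+1)4^k$ coming from $\Phi(4^k|g|)$. This delicate bookkeeping is characteristic of Lerner-type endpoint proofs, and I would model it on the arguments in \cite{lerner} and \cite{10.1112/blms/bdv090} referenced at the beginning of this section.
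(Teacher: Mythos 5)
Your setup (taking $\psi=\Phi$, $\gamma_\psi=16$, reducing by homogeneity, stratifying $\mathcal{S}$ into $S_k$, and invoking Lemma~\ref{keylem}) matches the paper, but the pigeonhole decomposition $E\subset\bigcup_k E_k$ with $E_k=\{4^{-k}N_k>\alpha_k\}$ followed by per-stratum absorption is a genuinely different step, and it loses a factor of $2^k$ that cannot be recovered. Once you commit to Chebyshev on each $E_k$ separately, you get
\[
(4^k\alpha_k-2^k)\,w(E_k)\le \frac{4\gamma_\psi}{\overline{\phi}^{-1}(32^{2^k})}\int \Phi(4^k|g|)\,M_\phi(w/\mu)\,d\mu,
\]
and absorption forces $\alpha_k>2^{-k}$, which together with the covering requirement $\sum_k\alpha_k\lesssim 1$ pins $\alpha_k\sim 2^{-k}$ up to a constant. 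Then $4^k\alpha_k-2^k\sim 2^k$, and after $\Phi(4^k|g|)\lesssim k\,4^k\,\Phi(|g|)$ you obtain $w(E_k)\lesssim \dfrac{k\,2^k}{\overline{\phi}^{-1}(32^{2^k})}\int\Phi(|g|)M_\phi(w/\mu)\,d\mu$; summing gives $\sum_k k\,2^k/\overline{\phi}^{-1}(32^{2^k})$, not $K_\phi$. To land on $K_\phi$ you would need $4^k\alpha_k-2^k\gtrsim 4^k$, i.e.\ $\alpha_k\gtrsim 1$, which destroys the covering. So no calibration of $\{\alpha_k\}$ can close this; the obstacle you flagged at the end is real and fatal for this route.

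The paper avoids the loss by performing a single global Chebyshev step: on $E:=\{4<\mathcal{A}_{\mathcal{S},L\log L}(f)\le 8,\ M_{\mu,L\log L}(f)\le\tfrac14\}$ one has $w(E)\le\tfrac14\int_E\mathcal{A}_{\mathcal{S},L\log L}(f)\,w\,dx$, and the stratum bound $\|f\|_{\Phi,Q}\le 4^{-k}$ for $Q\in S_k$ then supplies a factor $4^{-k}$ in front of $\int_E N_k\,w\,dx$ \emph{before} Lemma~\ref{keylem} is applied. Feeding in Lemma~\ref{keylem} with the same fixed $E$ for every $k$ and summing produces $\tfrac14\sum_k 2^{-k}\,w(E)=\tfrac14 w(E)$, which is absorbed in one shot across all $k$ simultaneously, while the $4^{-k}$ exactly cancels the $4^k$ from $\Phi(4^k|f|)\lesssim k\,4^k\,\Phi(|f|)$, leaving precisely $\sum_k k/\overline{\phi}^{-1}(32^{2^k})=K_\phi$. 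The crucial point is that the $4^{-k}$ gain from the Luxemburg-norm bound on the stratum is used multiplicatively against the right-hand side of Lemma~\ref{keylem} rather than being spent, as in your scheme, to choose a threshold; I would recommend replacing the pigeonhole by this single-Chebyshev-plus-stratum-factor mechanism.
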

\begin{proof}
In this proof, we take the Young function $\psi(x):=x\log (e+x)$, the constant $\gamma_\psi =16$, and apply Lemma \ref{keylem}. Define the set 
\[
E:=\left\{4<\mathcal{A}_{\mathcal{S},L\log L}(f)\leq 8, M_{\mu,{L\log L}}(f)\leq\frac{1}{4}\right\}.    
    \]
    By homogeneity, we only need to show that
    \begin{align*}
        w(E)\lesssim K_\phi \int \psi(|f(x)|)M_\phi w (x)dx.
    \end{align*}
By Chebyshev's inequality, we have
\begin{align*}
w(E)&\leq\frac{1}{4}\int_E\mathcal{A}_{\mathcal{S},L\log L}f(x)w(x)dx\\
&=\frac{1}{4}\sum^{\infty}_{k=1}\int_E\mathcal{A}_{\mathcal{S}_k,L\log L}f(x)w(x)dx\\
&=\frac{1}{4}\sum^{\infty}_{k=1}\int_E\sum_{Q\in\mathcal{S}_k}\|f\|_{L\log L,Q}\chi_Q (x)w(x)dx\\
&\leq \frac{1}{4}\sum^{\infty}_{k=1} \frac{1}{4^k}\cdot\bigg( 2^k w(E)+\frac{64}{{\overline{\phi}^{-1}((32)^{2^k})}}\cdot\int \psi(4^k|f|)(x) M_{\phi}\left(\frac{w}{\mu}\right)(x)d\mu(x)\bigg)\\
&\lesssim \frac{1}{4}\sum^{\infty}_{k=1} \frac{1}{4^k}\cdot\bigg( 2^k w(E)+\frac{64}{{\overline{\phi}^{-1}((32)^{2^k})}}\cdot\int k 4^{k}\psi(|f|)(x) M_{\phi}\left(\frac{w}{\mu}\right)(x)d\mu(x)\bigg),
\end{align*}
which implies that
\begin{align*}
w(E)&\lesssim\sum^{\infty}_{k=1}\frac{k}{{\overline{\phi}^{-1}((32)^{2^k})}}\int \psi(|f|)(x)M_{\phi}\left(\frac{w}{\mu}\right)(x)d\mu(x).
\end{align*}

The proof is complete.
\end{proof}
\begin{itemize}
    \item \textbf{Young Functions, Luxemburg Norms, and John-Nirenberg Inequality}: Recall the fact that given any $b\in \rm BMO_{\triangle_\lambda}(\R_+)$
      There is a constant $C_s<+\infty$ such that for any interval $B$, 
    \[\hskip-6cm
    |s|<(ce\|f\|_{{\rm BMO}_{\triangle_\lambda}(\R_+)}^{-1}) \implies
    \]
    \[\frac{1}{\mu (B)}\int_{B}e^{s|b(x)-b_{B}|}d\mu(x) \leq     C_s:=1+e\cdot \frac{A^{-1}\cdot|s|\cdot\|b\|_{{\rm BMO}_{\triangle_\lambda}(\R_+)}}{1-A^{-1}\cdot|s|\cdot\|b\|_{{\rm BMO}_{\triangle_\lambda}(\R_+)}}.
    \]
  From this fact, if we take $\varphi(x):=e^x -1$, then we have \[
    \|b-b_B\|_{\varphi,B}\lesssim \|b\|_{\rm BMO_{\triangle_\lambda}(\R_+)},
    \]
    which can be deduced from 
    \[
\|f\|_{\varphi,B}\leq 1\Longleftrightarrow \frac{1}{\mu(B)}\int_B \varphi(|f(x)|)d\mu(x)\leq1 .
    \]
\end{itemize}
\begin{lem}
    Let $A$,$B$, and $C$ be non-negative, continuous, strictly increasing function on $[0\infty)$ with
    \[A^{-1} (t)\cdot B^{-1} (t)\leq C^{-1} (t),\quad\forall t\geq 0.\]
    Suppose that $C$ is also convex, then
    \[
    \|fg\|_{C,Q}\leq 2\|f\|_{A,Q}\|g\|_{B,Q}.
    \]
\end{lem}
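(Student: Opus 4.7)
The plan is to reduce the Orlicz-space estimate to a single pointwise inequality on nonnegative reals and then combine that inequality with the definition of the Luxemburg norm and the convexity of $C$. The pivotal pointwise claim is that
\[
C(st) \leq A(s) + B(t), \qquad s,t \geq 0.
\]
To establish this, fix $s,t \geq 0$ and set $u := \max\{A(s), B(t)\}$. Since $A$ and $B$ are strictly increasing and continuous, $s \leq A^{-1}(u)$ and $t \leq B^{-1}(u)$. Multiplying these and invoking the hypothesis $A^{-1}(u)\,B^{-1}(u) \leq C^{-1}(u)$ gives $st \leq C^{-1}(u)$, and applying $C$ yields $C(st) \leq u \leq A(s)+B(t)$.

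Next, set $\alpha := \|f\|_{A,Q}$ and $\beta := \|g\|_{B,Q}$; we may assume both are positive (otherwise $f$ or $g$ vanishes $\mu$-a.e.\ on $Q$ and the inequality is trivial). Applying the pointwise bound with $s = |f(x)|/\alpha$ and $t = |g(x)|/\beta$, integrating against $d\mu/\mu(Q)$ over $Q$, and using the fact recalled in the excerpt that $\|h\|_{\Psi,Q}\leq 1$ implies $\frac{1}{\mu(Q)}\int_Q \Psi(|h|)\,d\mu \leq 1$, one gets
\[
\frac{1}{\mu(Q)} \int_Q C\!\left(\frac{|f(x)g(x)|}{\alpha\beta}\right)\,d\mu(x) \leq \frac{1}{\mu(Q)}\int_Q A\!\left(\frac{|f|}{\alpha}\right)d\mu + \frac{1}{\mu(Q)}\int_Q B\!\left(\frac{|g|}{\beta}\right)d\mu \leq 2.
\]

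To close the gap between $2$ and $1$ in the Luxemburg definition, I would use the convexity of $C$ with the standard Young-function normalization $C(0)=0$: this gives $C(x/2) \leq \tfrac{1}{2}C(x)$ for all $x\geq 0$. Inserting the factor $1/2$ in the argument therefore halves the averaged integral, so that
\[
\frac{1}{\mu(Q)} \int_Q C\!\left(\frac{|f(x)g(x)|}{2\alpha\beta}\right)\,d\mu(x) \leq 1,
\]
which by definition of the Luxemburg norm yields $\|fg\|_{C,Q} \leq 2\alpha\beta = 2\|f\|_{A,Q}\|g\|_{B,Q}$.

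The only step with any real content is the pointwise inequality $C(st) \leq A(s) + B(t)$, which is precisely what the inverse condition $A^{-1}B^{-1}\leq C^{-1}$ is designed to produce; convexity of $C$ then absorbs the factor $2$. I do not anticipate a genuine obstacle, since everything else is standard bookkeeping with the averaged Luxemburg norm.
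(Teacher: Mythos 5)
Your proof is correct and is the standard argument for the generalized H\"older inequality in Orlicz spaces. The paper states this lemma without proof, so there is nothing to compare against, but your reasoning is complete: the pointwise Young-type inequality $C(st)\leq A(s)+B(t)$ follows cleanly from the inverse condition via $u=\max\{A(s),B(t)\}$, the normalization $\frac{1}{\mu(Q)}\int_Q A(|f|/\alpha)\,d\mu\leq 1$ at $\alpha=\|f\|_{A,Q}$ follows from the monotonicity in $s$ of that average together with continuity of $A$ (monotone convergence as $s\downarrow\alpha$), and convexity of $C$ with $C(0)=0$ (which is forced on all three functions once $A^{-1},B^{-1},C^{-1}$ are required to exist on $[0,\infty)$) absorbs the factor of $2$ via $C(x/2)\leq\tfrac12 C(x)$. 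One small point worth making explicit if you write this up: the hypothesis that $A^{-1},B^{-1},C^{-1}$ are defined on all of $[0,\infty)$ implicitly forces $A,B,C$ to vanish at $0$ and be unbounded, which is exactly what is needed both for the Luxemburg quantity to behave like a norm and for the degenerate case $\alpha\beta=0$ to be genuinely trivial.
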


\begin{lem}\label{change}
Let $\Psi$ be a Young function and $T$ be an operator such that for every Young function $\phi$,
\[
w(\{|Tf|>t\})\leq \int^\infty_1 \frac{\phi^{-1}(t)}{t^2}dt\cdot\int \Psi\Big(\frac{|f(x)|}{t}\Big)M_{\phi(L)}w(x)dx,\quad \forall t>0.
\]
Then
\[
w(\{|Tf|>t\})\lesssim\int^\infty_1 \frac{\phi^{-1}(t)}{t^{2}\log(e+t)}dt\int\Psi\Big(\frac{|f(x)|}{t}\Big)M_{\Psi\circ\phi(L)}w(x)dx,\quad\forall t>0.
\]
\end{lem}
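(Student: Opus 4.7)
The strategy is to apply the hypothesis with the Young function $\phi$ replaced by the composition $\Psi \circ \phi$. Since $(\Psi \circ \phi)^{-1}(s) = \phi^{-1}(\Psi^{-1}(s))$, this substitution immediately produces the Orlicz maximal operator $M_{\Psi \circ \phi(L)} w$ on the right-hand side, matching the form of the conclusion. What remains is then purely to compare the numerical factor
\[
\int_1^\infty \frac{\phi^{-1}(\Psi^{-1}(s))}{s^2}\,ds
\]
to $\int_1^\infty \phi^{-1}(s)/(s^2 \log(e+s))\,ds$, after which the lemma follows by a direct plug-in.

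To carry out that comparison, I would perform the change of variable $u = \Psi^{-1}(s)$, equivalently $s = \Psi(u)$ and $ds = \Psi'(u)\,du$. The integral transforms into
\[
\int_{\Psi^{-1}(1)}^\infty \frac{\phi^{-1}(u)\,\Psi'(u)}{\Psi(u)^2}\,du.
\]
For the Young function $\Psi(u) = u\log(e+u)$ appearing in the commutator endpoint theorem, a short computation gives $\Psi'(u) \simeq \log(e+u)$ on $[\Psi^{-1}(1),\infty)$, and $\Psi(u)^2 = u^2 \log^2(e+u)$, whence $\Psi'(u)/\Psi(u)^2 \simeq 1/(u^2 \log(e+u))$. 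This produces exactly the target kernel $\phi^{-1}(u)/(u^2 \log(e+u))$ in the conclusion.

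The only remaining bookkeeping is to reset the lower limit from $\Psi^{-1}(1)$ back to $1$. This is harmless: since $\Psi^{-1}(1)$ is a fixed positive constant, on the bounded interval with endpoints $\Psi^{-1}(1)$ and $1$ the integrand is dominated by $\phi^{-1}(1)$ times a universal constant, while the lower bound $\int_1^2 \phi^{-1}(u)/(u^2 \log(e+u))\,du \gtrsim \phi^{-1}(1)$ (using monotonicity of $\phi^{-1}$) absorbs that excess into the implicit constant. I do not foresee a serious technical obstacle here; the structural content of the lemma is the simple observation that composing with $\Psi$ trades an Orlicz maximal of larger type for exactly one extra logarithmic factor of decay $1/\log(e+t)$ in the dilation integral, and the change of variables above is what realises this trade quantitatively.
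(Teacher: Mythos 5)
Your proposal is correct and follows the same route as the paper, whose entire proof is the one-line instruction "replace $\phi$ by $\Psi\circ\phi$"; you have simply supplied the change-of-variables calculation (using $(\Psi\circ\phi)^{-1}=\phi^{-1}\circ\Psi^{-1}$, $\Psi'(u)\simeq\log(e+u)$, and the harmless shift of the lower limit) that the paper leaves implicit.
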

Replace the function $\phi$ by $\Psi\circ\phi$, and then we have the desired one. Next, we use this lemma to prove the endpoint estimate for the sparse type operator associated with the commutator.
\begin{lem}
Let $b\in \rm BMO_{\triangle_\lambda}(\R_+)$  and $\mathcal{S}$ be $\frac{7}{8}$-sparse. Suppose $\phi$ is a Young function that satisfies
\[
C_\phi :=\int^\infty_1 \frac{\phi^{-1}(t)}{t^{2}\log(e+t)}dt<\infty.
\]
Then for all $t>0$,
\begin{align*}
 w(\{|\mathcal{A}_{\mathcal{S},b}(f)|>t\})\lesssim C_\phi\frac{ \|b\|_{{\rm BMO}_{\triangle_\lambda}(\R_+)}}{t} \int |f(x)|\cdot M_{\Phi\circ\phi}\left(\frac{w}{\mu}\right)(x)d\mu(x),
\end{align*}
in which $\Phi(x):=x\log (e+x).$
\end{lem}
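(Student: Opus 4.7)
The strategy is to reduce, via $d\mu$-duality, to the endpoint estimate for the sparse $L\log L$ operator (Lemma~\ref{Lemmalog}), and then to upgrade the maximal function by Lemma~\ref{change}. Assume without loss of generality that $f\geq 0$ and write $E=\{\mathcal{A}_{\mathcal{S},b}f>t\}$. By Chebyshev's inequality and the fact that $\mathcal{A}_{\mathcal{S},b}$ and $\mathcal{A}^*_{\mathcal{S},b}$ are $d\mu$-adjoints (as exploited in the proof of Theorem~\ref{thmb}),
\[
t\,w(E) \;\leq\; \int \mathcal{A}_{\mathcal{S},b}f \cdot \chi_E w\,dx \;=\; \int f \cdot \mathcal{A}^*_{\mathcal{S},b}\!\bigl(\chi_E\,\tfrac{w}{\mu}\bigr)\,d\mu.
\]
On each cube $Q\in\mathcal{S}$, apply the generalized Hölder inequality (stated just before the target lemma) with the Young-function pair $A(t)=e^t-1$ and $B(t)=t\log(e+t)$ against $C(t)=t$, together with the John--Nirenberg--Orlicz bound $\|b-b_Q\|_{\exp L, Q, \mu}\lesssim \|b\|_{{\rm BMO}_{\triangle_\lambda}(\R_+)}$ recalled just before the statement, to obtain the pointwise domination $\mathcal{A}^*_{\mathcal{S},b}g(x)\lesssim \|b\|_{{\rm BMO}_{\triangle_\lambda}(\R_+)}\,\mathcal{A}_{\mathcal{S},L\log L}g(x)$. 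This reduces the claim to
\[
w(E) \;\lesssim\; \frac{\|b\|_{{\rm BMO}_{\triangle_\lambda}(\R_+)}}{t}\int f\cdot \mathcal{A}_{\mathcal{S},L\log L}\!\bigl(\chi_E\,\tfrac{w}{\mu}\bigr)\,d\mu.
\]

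To close the argument, decompose the sparse family $\mathcal{S}$ by the level sets $\mathcal{S}_k=\{Q\in\mathcal{S}:4^{-k-1}<\|\chi_E w/\mu\|_{L\log L,Q,\mu}\leq 4^{-k}\}$, parallel to the proof of Lemma~\ref{Lemmalog}, and re-run the layer-plus-bottom decomposition of Lemma~\ref{keylem} with $f\,d\mu$ playing the role of the weight that was $w\,dx$ there. Sparseness makes the layer part collapse via disjointness of the sets $\{E_Q\}$, while the bottom part produces the geometric sum $\sum_k k\cdot \overline{\phi}^{-1}((2\gamma_\psi)^{2^k})^{-1}$ paired against $M_\phi(w/\mu)$. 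Invoking Lemma~\ref{change} with $\Psi=\Phi$ then exchanges $M_\phi$ for $M_{\Phi\circ\phi}$ and replaces $K_\phi$ by $C_\phi$; the truncation $\chi_E$ disappears after this step, leaving the linear factor $|f|/t$ on the right and the claimed weighted weak-type bound.

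The principal obstacle is this last step: transferring the weak-type structure of Lemma~\ref{Lemmalog} (which controls only the $w$-measure of a level set) into an integrated pairing bound of the form $\int f\cdot \mathcal{A}_{\mathcal{S},L\log L}(\chi_E\,w/\mu)\,d\mu\lesssim C_\phi \int |f|\cdot M_{\Phi\circ\phi}(w/\mu)\,d\mu$. The $L\log L$ loss that is typical of BMO commutators must be absorbed entirely into the stronger maximal function $M_{\Phi\circ\phi}$, so that only the linear quantity $|f|\cdot M_{\Phi\circ\phi}(w/\mu)$ survives on the right; Lemma~\ref{change} is tailor-made for precisely this exchange, but checking that the sparseness constant $\tfrac78$ is compatible with the iterated Hölder and level-set bookkeeping, and that the $\chi_E$-truncation causes no essential loss after the pairing with $f$, is the delicate part that the full proof must verify.
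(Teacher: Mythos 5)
Your strategy — dualize with respect to $d\mu$, dominate $\mathcal{A}^*_{\mathcal{S},b}$ pointwise by $\|b\|_{{\rm BMO}_{\triangle_\lambda}}\,\mathcal{A}_{\mathcal{S},L\log L}$ via generalized H\"older on each cube, and then hope to convert the resulting pairing into the maximal-function bound — is genuinely different from what the paper does, and the first two steps are fine, but the third step has a real gap that you yourself flag and do not resolve. After the duality and H\"older steps you are left with
\[
w(E)\;\lesssim\;\frac{\|b\|_{{\rm BMO}_{\triangle_\lambda}}}{t}\int f\cdot \mathcal{A}_{\mathcal{S},L\log L}\!\bigl(\chi_E\,\tfrac{w}{\mu}\bigr)\,d\mu,
\]
and you need this to be $\lesssim C_\phi\frac{\|b\|}{t}\int f\,M_{\Phi\circ\phi}(w/\mu)\,d\mu$. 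The only way this would follow ``pointwise'' is if $\mathcal{A}_{\mathcal{S},L\log L}(h)\lesssim M_{\Phi\circ\phi}(h)$, which is false: a sparse sum over all scales is not dominated by a single maximal function. Your proposed ``re-run'' of Lemma~\ref{keylem} with $f\,d\mu$ playing the role of $w\,dx$ and level sets defined by $\|\chi_E w/\mu\|_{L\log L,Q}$ does not go through mechanically either, because the homogeneity normalization $M_\mu(f)\le\tfrac14$ and the truncation of the operator at a fixed height, which are what make the bottom part of Lemma~\ref{keylem} self-absorbing, have no analogue once you have moved to the pairing form $\int f\cdot\mathcal{A}_{\mathcal{S},L\log L}(\chi_E w/\mu)\,d\mu$ — the set $E$ depends on $\mathcal{A}_{\mathcal{S},b}f$ itself and is no longer sitting where the bookkeeping needs it.

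The paper avoids duality entirely. It keeps the operator on the $f$ side, normalizes $\|b\|_{{\rm BMO}_{\triangle_\lambda}}=1$ and $M_\mu(f)\le\tfrac14$ by homogeneity, and for $Q\in S_k$ splits $|b(x)-b_Q|\chi_Q(x)\le(\tfrac32)^k\chi_Q(x)+|b(x)-b_Q|\chi_{F_k}(x)$ with $F_k:=\{x\in Q:|b(x)-b_Q|\ge(\tfrac32)^k\}$, yielding $\mathcal{A}^{(1)}_{\mathcal{S},b}+\mathcal{A}^{(2)}_{\mathcal{S},b}$. The first piece is handled by Lemma~\ref{keylem} (with $\psi(x)=x$, $\gamma_\psi=4$), giving a geometric sum $\sum_k(\tfrac34)^k w(E_1)+\cdots$ that bootstraps, followed by Lemma~\ref{change}. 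The second piece uses the John--Nirenberg decay $\mu(F_k)\lesssim\mu(Q)e^{-c(3/2)^k}$ together with a three-function Orlicz--H\"older split $\|\tfrac{w}{\mu}\chi_{F_k}\|_{L\log L,Q}\le 2\|\chi_{F_k}\|_{A,Q}\|\tfrac{w}{\mu}\|_{\Phi\circ\phi,Q}$, where $A^{-1}=C^{-1}/B^{-1}$ is designed precisely so the $F_k$-decay pairs with $C_\phi$. That two-part decomposition of $|b-b_Q|$ at scale $k$ is the missing engine in your argument: it is what converts the $L\log L$ loss from the BMO function into a summable geometric sequence, without needing any pointwise domination of a sparse operator by a maximal function. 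If you want to keep your duality framework, you would have to redo all of this layer decomposition and level-set normalization inside the pairing, which is not a routine modification.
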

\begin{proof}
In this proof, we take the Young function $\psi(x):=x$ the constant $\gamma_\psi =4$, and again apply Lemma \ref{keylem}. Define the set 
\[
E:=\left\{8<\mathcal{A}_{\mathcal{S},b}(f), M_{\mu}(f)\leq\frac{1}{4}\right\}.   
    \]
    By homogeneity,we are free to assume that $\|b\|_{{\rm BMO}_{\triangle_\lambda}(\R_+)}=1$, and hence only need to show that
    \begin{align*}
        w(E)\lesssim C_\phi \int |f(x)| M_{\Phi\circ\phi}\left(\frac{w}{\mu}\right)(x)d\mu(x).
    \end{align*}
    For $Q\in S_k$, let\[
    F_k:=\Big\{x\in Q: |b(x)-b_Q|\geq \left(\frac{3}{2}\right)^k\Big\},\quad k\in\mathbb{N}.
    \]
    Then we have
    \begin{align*}
   |\mathcal{A}_{\mathcal{S},b}(f)|(x)
&\leq\sum^\infty_{k=1}\sum_{Q\in S_k}\left(\frac{3}{2}\right)^k\frac{1}{\mu(Q)}\int_Q |f|d\mu\cdot\chi_Q (x)+ \sum^\infty_{k=1}\sum_{Q\in S_k}|b(x)-b_Q|\frac{1}{\mu(Q)}\int_Q |f|d\mu\cdot\chi_{F_k} (x)\\
   &=:\mathcal{A}^{(1)}_{\mathcal{S},b}(f)(x)+\mathcal{A}^{(2)}_{\mathcal{S},b}(f)(x).
    \end{align*}
Then
\begin{align*}
w(E)&\leq w(\{x\in E:\mathcal{A}^{(1)}_{\mathcal{S},b}(f)(x)>4\})+w(\{x\in E:\mathcal{A}^{(2)}_{\mathcal{S},b}(f)(x)>4\})\\
&=:w(E_1)+w(E_2).
\end{align*}
For the first term $w(E_1)$, one has
\begin{align*}
w(E_1)&\leq\frac{1}{4}\sum^\infty_{k=1}\sum_{Q\in S_k} \left(\frac{3}{2}\right)^k \cdot\frac{1}{\mu(Q)}\int_Q |f|d\mu\int_{E_1} \chi_Q w(x)dx\\
&\leq\frac{1}{4}\sum^\infty_{k=1} \left(\frac{3}{2}\right)^k\cdot \left(\frac{1}{4}\right)^k\cdot\bigg(2^k w(E)+\frac{16}{{\overline{\phi}^{-1}((8)^{2^k})}}\cdot\int \psi(4^k|f|)(x) M_{\phi}\left(\frac{w}{\mu}\right)(x)d\mu(x)\bigg)\\
&\leq\frac{1}{4}\sum^\infty_{k=1}\left(\frac{3}{4}\right)^k \cdot w(E_1)+\frac{1}{4}\sum^\infty_{k=1} \left(\frac{3}{2}\right)^k \frac{16}{{\overline{\phi}^{-1}((8)^{2^k})}}\cdot\int |f|(x) M_{\phi}\left(\frac{w}{\mu}\right)(x)d\mu(x),
\end{align*}
which implies that
\begin{align*}
w(E_1)&\leq\sum^\infty_{k=1} \left(\frac{3}{2}\right)^k \frac{16}{{\overline{\phi}^{-1}((8)^{2^k})}}\cdot\int |f|(x) M_{\phi}\left(\frac{w}{\mu}\right)(x)d\mu(x)\\
&\lesssim\int^\infty_1 \frac{\phi^{-1}(t)}{t^2}dt\cdot\int |f|(x) M_{\phi}\left(\frac{w}{\mu}\right)(x)d\mu(x).
\end{align*}
Here, we point out that the last inequality is by \[\overline{\phi}^{-1}(t)\cdot\phi^{-1}(t)\sim t\] and
\[
\sum^\infty_{k=1} \left(\frac{3}{2}\right)^k \frac{16}{{\overline{\phi}^{-1}((8)^{2^k})}}\lesssim\sum^\infty_{k=1}\int^{8^{2^{k}}}_{8^{2^{k-1}}}\frac{1}{\overline{\phi}^{-1}(t)}\frac{dt}{t}\lesssim\int^\infty_1 \frac{\phi^{-1}(t)}{t^2}dt.
\]
Thus, by Lemma \ref{change},
\[
w(E_1)\leq C_\phi\cdot\int |f(x)| M_{\Phi\circ\phi}\left(\frac{w}{\mu}\right)(x)d\mu(x).
\]
Similar to the previous lemma, we also have the quantity
\[
\frac{1}{\mu(Q)}\int_Q |f|d\mu\leq\frac{8}{\mu(Q)}\int_{E_Q} |f|d\mu,
\]
and hence for the second term, 
\begin{align*}
w(E_2)&\lesssim \sum^\infty_{k=1}\sum_{Q\in S_k}\frac{1}{\mu(Q)}\int_Q |f|d\mu\int_{F_k}|b(x)-b_Q|\cdot\frac{w(x)}{\mu(x)}d\mu(x)\\
&\lesssim\sum^\infty_{k=1}\sum_{Q\in S_k}\int_{E_Q} |f|d\mu\cdot\left\|b-b_Q\right\|_{expL,Q}\cdot \left\|\frac{w}{\mu}\chi_{F_K}\right\|_{L\log L,Q}\\
&\lesssim\sum^\infty_{k=1}\sum_{Q\in S_k}\int_{E_Q} |f|d\mu\cdot\left\|\frac{w}{\mu}\chi_{F_K}\right\|_{L\log L,Q}.
\end{align*}
To control the term $\|\frac{w}{\mu}\chi_{F_K}\|_{L\log L,Q}$, we consider the function
\[C(x):=x\log (e+x),\quad B(x):=\Phi\circ\phi(x)\]
and define the function $A$ to be with
\[
A^{-1}(x):=\frac{C^{-1}(x)}{B^{-1}(x)}.
\]
Thus,
\begin{align*}
 \left\|\frac{w}{\mu}\chi_{F_K}\right\|_{L\log L,Q}
 &\leq 2 \left\|\chi_{F_K}\right\|_{A,Q}\cdot\left\|\frac{w}{\mu}\right\|_{B,Q}
 =\frac{2}{A^{-1}\left(\frac{\mu(Q)}{\mu(F_k)}\right)}\cdot \left\|\frac{w}{\mu}\right\|_{B,Q}.
\end{align*}
Moreover, by John--Nirenberg Inequality, we have
\[
\mu(F_k)\leq \mu (Q)\cdot \min\{1,e^{1+\frac{-(\frac{3}{2})^k}{c\cdot e}}
\}.
\]
Therefore, if we define $\alpha_k:=\min\{1,e^{1+\frac{-(\frac{3}{2})^k}{c\cdot e}}
\}$, then
\begin{align*}
  w(E_2)  &\lesssim\sum^\infty_{k=1}\sum_{Q\in S_k}\int_{E_Q} |f|d\mu\cdot\frac{2}{A^{-1}(\frac{1}{\alpha_k})}\cdot \left\|\frac{w}{\mu}\right\|_{B,Q}\\
  &\leq \sum^\infty_{k=1}\frac{2}{A^{-1}(\frac{1}{\alpha_k})}\int |f|\cdot  M_{\Phi\circ\phi}\left(\frac{w}{\mu}\right)d\mu\\
  &\lesssim C_{\phi}\cdot \int |f|\cdot  M_{\Phi\circ\phi}\left(\frac{w}{\mu}\right)d\mu.
\end{align*}
The proof is complete.
\end{proof}
\begin{thm}
    Let $b\in \rm BMO_{\triangle_\lambda}(\R_+)$ and $\phi$ be a Young function that satisfies
\[
C_\phi :=\int^\infty_1 \frac{\phi^{-1}(t)}{t^{2}\log(e+t)}dt<\infty.
\]
Then for all $t>0$, 
\[
 w(\{|[b,R_\lambda]f|>t\})\lesssim C_\phi \cdot  \int \Phi\bigg(\|b\|_{\rm BMO_{\triangle_\lambda}(\R_+)}\cdot\frac{|f(x)|}{t}\bigg)M_{\Phi\circ\phi}\left(\frac{w}{\mu}\right)(x)d\mu(x),
\]
where $\Phi(x):=x\log(e+x).$
\end{thm}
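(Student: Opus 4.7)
The plan is to derive the endpoint bound for $[b, R_\lambda]$ by combining a pointwise sparse domination principle with the two weak-type sparse endpoint estimates already established in this section. By the pointwise sparse domination available for Calderón--Zygmund type commutators in the Bessel space of homogeneous type (as developed in \cite{duong2021two} and in the spirit of the Lerner-style arguments of \cite{lerner}), there exist finitely many sparse families $\mathcal{S}^{(i)}$ of dyadic cubes such that
\[
|[b,R_\lambda]f(x)| \lesssim \|b\|_{\rm BMO_{\triangle_\lambda}(\R_+)} \sum_i \Bigl(\mathcal{A}_{\mathcal{S}^{(i)}, L\log L}(|f|)(x) + \mathcal{A}_{\mathcal{S}^{(i)}, b}(|f|)(x) + \mathcal{A}^*_{\mathcal{S}^{(i)}, b}(|f|)(x)\Bigr).
\]
By homogeneity and the $b$-linearity of the commutator, I normalize so that $\|b\|_{\rm BMO_{\triangle_\lambda}(\R_+)}=1$, and then distribute the level-set $\{|[b,R_\lambda]f|>t\}$ across the finitely many sparse pieces.

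For the Orlicz sparse piece $\mathcal{A}_{\mathcal{S},L\log L}$, I would invoke Lemma \ref{Lemmalog} directly, yielding a bound of the form
\[
w(\{\mathcal{A}_{\mathcal{S},L\log L}(|f|)>t\}) \lesssim K_\phi \int \Phi\bigl(|f(x)|/t\bigr) M_{\phi}\bigl(w/\mu\bigr)(x)\, d\mu(x),
\]
where $K_\phi$ involves $\overline{\phi}^{-1}$ at doubly-exponential scales. To convert this into the target form that uses $M_{\Phi\circ\phi}$ and the finite constant $C_\phi$, I would apply Lemma \ref{change} with $\Psi=\Phi$, replacing the Young function $\phi$ by $\Phi\circ\phi$ inside the maximal function and improving the constant from $K_\phi$ (without the $\log$ gain) to $C_\phi$ (with the $\log(e+t)$ gain in the denominator). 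This produces precisely the right-hand side claimed in the theorem.

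For the commutator-type pieces $\mathcal{A}_{\mathcal{S},b}$ and its adjoint $\mathcal{A}^*_{\mathcal{S},b}$, I would invoke the lemma immediately preceding the theorem, which yields
\[
w(\{\mathcal{A}_{\mathcal{S},b}(|f|)>t\}) \lesssim \frac{C_\phi}{t}\int |f(x)|\, M_{\Phi\circ\phi}\bigl(w/\mu\bigr)(x)\, d\mu(x).
\]
Since $s \leq \Phi(s)$ for all $s\geq 0$, the linear integrand $|f(x)|/t$ is pointwise dominated by $\Phi(|f(x)|/t)$, so this piece also fits under the same right-hand side. Summing the finitely many contributions yields the stated estimate.

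The principal obstacle is securing the pointwise sparse domination of $[b,R_\lambda]$ itself with the correct $L\log L$ Orlicz averages (rather than just $L^1$ averages), since only then do the weighted weak-type bounds from Lemma \ref{Lemmalog} and the preceding lemma apply directly. In the classical Euclidean setting this is the Lerner--Ombrosi--Rivera-Ríos reduction via a grand-maximal truncation operator; in the Bessel setting one must verify that the standard Calderón--Zygmund kernel estimates of $R_\lambda$ (with respect to $d\mu$) together with the dyadic system of Theorem \ref{theorem dyadic cubes} are sufficient to carry the same selection procedure through. Once this sparse domination is in hand, the remainder of the argument is an additive combination of the two endpoint lemmas and an application of Lemma \ref{change}.
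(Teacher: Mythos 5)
Your overall architecture is close to the paper's: dominate $[b,R_\lambda]$ by sparse operators, handle one piece via Lemma~\ref{Lemmalog} plus the change-of-Young-function trick of Lemma~\ref{change}, and handle another piece by the lemma immediately preceding the theorem. But there is a genuine gap in how you treat $\mathcal{A}^*_{\mathcal{S},b}$. You claim that the preceding lemma ``yields'' the weak-type bound for both $\mathcal{A}_{\mathcal{S},b}$ and its adjoint, but that lemma is stated and proved only for $\mathcal{A}_{\mathcal{S},b}$, whose structure (the $|b(x)-b_Q|$ factor lives outside the average, depending on the evaluation point $x$) is used essentially in its proof via the sets $F_k=\{x\in Q:|b(x)-b_Q|\geq(3/2)^k\}$. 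The adjoint $\mathcal{A}^*_{\mathcal{S},b}$ has $|b(t)-b_Q|$ inside the average, and the paper handles it differently: by the generalized H\"older inequality together with the John--Nirenberg bound $\|b-b_Q\|_{\exp L,Q}\lesssim\|b\|_{\rm BMO_{\triangle_\lambda}}$, one gets the pointwise domination $\mathcal{A}^*_{\mathcal{S},b}f\lesssim\|b\|_{\rm BMO_{\triangle_\lambda}}\,\mathcal{A}_{\mathcal{S},L\log L}f$, and then Lemma~\ref{Lemmalog} plus Lemma~\ref{change} finishes. That reduction step is the missing idea in your argument.

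Two further remarks. First, your proposed pointwise sparse domination is not the one used here. The paper (following \cite{duong2021two}) dominates the commutator simply by $\mathcal{A}_{\mathcal{S},b}+\mathcal{A}^*_{\mathcal{S},b}$ over finitely many sparse families, with no $\|b\|_{\rm BMO_{\triangle_\lambda}}$ prefactor and no separate $\mathcal{A}_{\mathcal{S},L\log L}$ term; the $L\log L$ scale and the BMO norm appear only at the estimation stage when bounding $\mathcal{A}^*_{\mathcal{S},b}$. Including both the prefactor and the commutator pieces double-counts the BMO norm, and including the $L\log L$ piece in the domination is redundant. Second, the worry you raise at the end---that one must secure an $L\log L$-Orlicz sparse domination of the commutator itself---is therefore misplaced: the plain $L^1$-average sparse domination of the commutator is all that is required, and the $L\log L$ structure is generated internally by the Orlicz-duality estimate on $\mathcal{A}^*_{\mathcal{S},b}$.
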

\begin{proof}
By Lemma \ref{Lemmalog}, for all $t>0$,\[
w(\{|\mathcal{A}_{\mathcal{S},L\log L}(f)|>t\})\lesssim K_\phi \int \Phi\bigg(\frac{|f(x)|}{t}\bigg)M_{\phi}\left(\frac{w}{\mu}\right)(x)d\mu(x).\]
Hence, by lemma \ref{change}, we have 
\[w(\{|\mathcal{A}_{\mathcal{S},L\log L}(f)|>t\})\lesssim C_\phi \int \Phi\bigg(\frac{|f(x)|}{t}\bigg)M_{\Phi\circ\phi}\left(\frac{w}{\mu}\right)(x)d\mu(x),
\]
therefore combining all the lemmas we obtain that 
    \begin{align*}
        &w(\{|[b,R_\lambda]f|>t\})\\
        &\leq w(\{|\mathcal{A}^*_{\mathcal{S},b}(f)|>\frac{t}{2}\})+w(\{|\mathcal{A}_{\mathcal{S},b}(f)|>\frac{t}{2}\})\\
&\lesssim w(\{|\mathcal{A}_{\mathcal{S},L\log L}(f)|>\frac{t}{2\|b\|_{\rm BMO_{\triangle_\lambda}(\R_+)}}\})+ C_\phi\frac{ \|b\|_{\rm BMO_{\triangle_\lambda}(\R_+)}}{t} \int |f(x)|\cdot M_{\Phi\circ\phi}\left(\frac{w}{\mu}\right)(x)d\mu(x)\\
&\lesssim C_\phi \cdot \int \Phi\bigg(\|b\|_{\rm BMO_{\triangle_\lambda}(\R_+)}\cdot\frac{|f(x)|}{t}\bigg)M_{\Phi\circ\phi}\left(\frac{w}{\mu}\right)(x)d\mu(x).
\end{align*}
The proof is complete.
\end{proof}

\section{Proof of Theorem \ref{thmiff}}
\begin{itemize}
    \item $(\Longrightarrow)$: We show this direction first.
\end{itemize}
Note that if $\phi(t)\leq t^2$, for all $t\geq t_0$, then
\[
\Phi\circ\phi (t)\leq C\phi(t)\log(e+t),
\]
and hence if $\varepsilon\in(0,1)$ and $\phi(t):=t\log^{\varepsilon} (e+t)$, then we have 
\[
w(\{|[b,R_\lambda]f|>t\})\lesssim C_\phi  \int \Phi\bigg(2\|b\|_{\rm BMO_{\triangle_\lambda}(\R_+)}\cdot\frac{|f(x)|}{t}\bigg)M_{L(\log L)^{1+\varepsilon}}\left(\frac{w}{\mu}\right)(x)d\mu(x).
\]
Besides,\[
\log t\leq \frac{t^\alpha}{\alpha},\quad\forall t\geq 1,\alpha>0\implies M_{L(\log L)^{1+\varepsilon}}\left(\frac{w}{\mu}\right)(x)\leq \frac{c}{\alpha^{1+\varepsilon}} M_{L^{1+(1+\varepsilon)\alpha}}\left(\frac{w}{\mu}\right)(x).
\]
Since $1+(1+\varepsilon)\alpha>1$, then there is $1<p<\infty$ such that
\begin{align*}
M_{L(\log L)^{1+\varepsilon}}\left(\frac{w}{\mu}\right)(x)&\leq \frac{c}{\alpha^{1+\varepsilon}} M_{L^{1+(1+\varepsilon)\alpha}}\left(\frac{w}{\mu}\right)(x)\\
&\underset{p}{\lesssim} M\left(\frac{w}{\mu}\right)(x)\\
&\hskip-.35cm\underset{\widetilde{A}_{1,\lambda-\frac{1}{2}}}{\lesssim} w(x).
\end{align*}
Therefore,
\begin{align*}
    w(\{|[b,R_\lambda]f|>t\})&\lesssim C_\phi  \int \Phi\bigg(2\|b\|_{\rm BMO_{\triangle_\lambda}(\R_+)}\cdot\frac{|f(x)|}{t}\bigg)M_{L(\log L)^{1+\varepsilon}}\left(\frac{w}{\mu}\right)(x)d\mu(x)\\
    &\underset{w,\Phi}{\lesssim} \int \Phi\bigg(\|b\|_{\rm BMO_{\triangle_\lambda}(\R_+)}\frac{|f(x)|}{t}\bigg)w(x)dx.
\end{align*}
\begin{itemize}
    \item $\Longleftarrow$: Next, we prove this direction.
\end{itemize}

Suppose that
\[
\forall t>0,\quad 
 w(\{|[b,R_\lambda]f|>t\})\underset{w,\Phi}{\lesssim} \int \Phi\bigg(\hat{A}\frac{|f(x)|}{t}\bigg)w(x)dx.
\]
We will show that
\[
\exists A>0 \quad  s.t. \quad \forall B\implies w(\{x\in B: |b(x)-C_B|>A\})\leq \frac{1}{2}\cdot w(B).
\]
This will induce 
\[
b\in \rm
 BMO_{0,\frac{1}{2}}(\R_+,w).
\]
Since there exist positive constants $3\le A_1\le A_2$ such that for any interval $B:=B(x_0, r)\subset \R_+$, there exists another interval $\widetilde B:=B(y_0, r)\subset \R_+$
such that $A_1 r\le |x_0- y_0|\le A_2 r$, 
and  for all $(x,y)\in ( B\times \widetilde{B})$, $K(x, y)$ does not change sign and
\[
|K(x, y)|\underset{A_1, A_2}{\gs} \frac1{\mu(\widetilde B)}.
\]
Let $\alpha_{b}(\widetilde B)$ be a median value of $b$ on the ball $\widetilde B $ with respect to $\mu$. Namely,  
$\alpha_{b}(\widetilde B)$ is a real number so that the two sets below have measures at least $ \frac{1}2 \mu  (\tilde B) $.  
\begin{align*}
\nonumber F_{+}\subset\{ y\in \tilde B: b(y)\geq \alpha_b(\tilde B) \}, \quad F_{-}\subset\{ y\in \tilde B: b(y)\leq \alpha_b(\tilde B)\}.
\end{align*}
Next, we define
$E_{+}=\{ x\in B: b(x)\geq \alpha_b(\tilde B) \}$ and $E_{-}=\{ x\in B: b(x)< \alpha_b(\tilde B) \}.$\ 
Then $B=E_{+}\cup E_{-}$ and $E_{+}\cap E_{-}=\emptyset$. 
And it is clear that 
\[\begin{split}\label{bx-by0-1 1}
b(x)-b(y) &\geq 0, \quad (x,y)\in E_{+}\times F_{-},\\ b(x)-b(y) &< 0, \quad (x,y)\in E_{-}\times F_{+}.
\end{split}\]
Let $B$ be an interval and take $C_B=\alpha_{b}(\widetilde B)$, then
\begin{align*}
    |b(x)-\alpha_{b}(\widetilde B)|\chi_B (x)&=(b(x)-\alpha_{b}(\widetilde B))\chi_{E_+} (x)+(\alpha_{b}(\widetilde B)-b(x))\chi_{E_-} (x)\\
    &\leq (b(x)-b(y))\chi_{E_+}(x)+(b(z)-b(x))\chi_{E_-}(x),
\end{align*}
for all $y\in F_{-}$ and $z\in F_+,$
and hence
\begin{align*}
 &|b(x)-\alpha_{b}(\widetilde B)|\chi_B (x)\\
 &\leq \frac{1}{\mu(F_-)}\int_{F_-} (b(x)-b(y))\chi_{E_+}(x)d\mu(y)+\frac{1}{\mu(F_+)}\int_{F_+} (b(z)-b(x))\chi_{E_-}(x)d\mu(z)\\
 &\lesssim \frac{1}{\mu(\widetilde B)}\int_{F_-} (b(x)-b(y))\chi_{E_+}(x)d\mu(y)+\frac{1}{\mu(\widetilde B)}\int_{F_+} (b(z)-b(x))\chi_{E_-}(x)d\mu(z)\\
 &\lesssim \int_{F_-} (b(x)-b(y))|K(x,y)|\chi_{E_+}(x)d\mu(y)+\int_{F_+} (b(z)-b(x))|K(x,y)|\chi_{E_-}(x)d\mu(z)\\
 &\leq \chi_{E_+}(x)\cdot |[b,R_\lambda](\chi_{F_-})|(x)+\chi_{E_-}(x)\cdot |[b,R_\lambda](\chi_{F_+})|(x).
\end{align*}
Therefore,
\begin{align*}
    &w(\{x\in B: |b(x)-C_B|>A\})\\
    &\leq w\left(\left\{x\in B: \chi_{E_+}(x)\cdot |[b,R_\lambda](\chi_{F_-})|(x)|>\frac{A}{2}\right\}\right)\\&\quad+w\left(\left\{x\in B: \chi_{E_-}(x)\cdot |[b,R_\lambda](\chi_{F_+})|(x)>\frac{A}{2}\right\}\right)\\
    &\lesssim\int_{E_+} \Phi\bigg(\hat{A}\frac{2}{A}\bigg)dw+\int_{E_-} \Phi\bigg(\hat{A}\frac{2}{A}\bigg)dw.
\end{align*}
There is $c(A_1,A_2)>0$ and take $A=\frac{\hat{A}}{c(A_1,A_2)}$, then
\[
 w(\{x\in B: |b(x)-C_B|>A\})\leq\frac{1}{2}w(B).
\]
To conclude, by Lemma \ref{emb} and Lemma \ref{sec},
\[
b\in {\rm BMO}_{0,\frac{1}{2}} (\mathbb{R_+ },w)\hookrightarrow{\rm BMO}_{L^p(w)} (\mathbb{R_+})\simeq\rm BMO_{\triangle_\lambda}.
\]

\section{Counterexample: Proof of Theorem \ref{thmcounter}}
Note that $\mu\in \widetilde{A}_{1,\lambda-\frac{1}{2}}$ and that the kernel of the Riesz operator has the following expression
\[
R_{\triangle_\lambda}(x,y)=-\frac{2\lambda}{\pi}\int^\infty_0 \frac{(x-y \cos \theta)(\sin\theta)^{2\lambda-1}}{(x^2+y^2-2xy\cos\theta)^{\lambda+1}}d\theta.
\]

First observe the following fact.
\begin{prop}
    $\log x^{2\lambda}\in\rm BMO_{\triangle_\lambda}(\R_+)$.
\end{prop}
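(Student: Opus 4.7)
The plan is to verify directly that $\log x^{2\lambda} = 2\lambda\log x$ has bounded mean oscillation in the sense of Definition~\ref{defbmo}, by carrying out the same two-case split on intervals that was used in the proof of Proposition~\ref{Apmu Aplambda}. By positive homogeneity of the $\mathrm{BMO}_{\triangle_\lambda}$ seminorm it suffices to treat $\log x$. Furthermore, by the usual equivalent formulation of $\mathrm{BMO}$, one may replace the average $f_B$ in \eqref{fB} by any constant $c\in\mathbb{R}$ at the cost of a factor of $2$. So the task reduces to producing, for each interval $B = B_R(x_0)\cap \mathbb{R}_+$, a constant $c_B$ with $\frac{1}{\mu(B)}\int_B |\log t - c_B|\,d\mu(t) \lesssim 1$, uniformly in $B$.

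For Case~$1$, when $x_0 \geq 3R$, every $t\in B$ satisfies $t/x_0 \in [2/3,\,4/3]$, so the choice $c_B := \log x_0$ gives $|\log t - c_B| = |\log(t/x_0)| \leq \log(4/3)$ pointwise, and the BMO average is bounded by an absolute constant. For Case~$2$, when $x_0 < 3R$, one has $B\subset(0,4R)$ together with $\mu((0,4R)) \lesssim \mu(B)$ by the doubling of $d\mu$ (this is the same reduction as in Proposition~\ref{Apmu Aplambda}). So after enlarging the domain of integration, it suffices to estimate the average of $|\log t - c|$ on $(0,4R)$ against $t^{2\lambda}dt$. Choosing $c_B$ to be the mean of $\log t$ on $(0,4R)$ with respect to $d\mu$, using $\mu((0,4R)) = (4R)^{2\lambda+1}/(2\lambda+1)$, and substituting $u = t/(4R)$, the expression collapses to
\[
(2\lambda+1)\int_0^1 \Bigl|\log u + \tfrac{1}{2\lambda+1}\Bigr|\,u^{2\lambda}\,du,
\]
which is a finite constant depending only on $\lambda$ because $\int_0^1 u^{2\lambda}|\log u|\,du < \infty$ for $\lambda > 0$.

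The only delicate point is the scale-invariance in Case~$2$: the dilation $t\mapsto t/(4R)$ introduces an additive constant $\log(4R)$, which must be absorbed into $c_B$ so that the resulting bound is $R$-independent. This absorption is automatic once $c_B$ is taken to be the mean, so no genuine obstacle remains. As an alternative, one could simply invoke the equivalence $\mathrm{BMO}_{\triangle_\lambda}(\mathbb{R}_+)\simeq \mathrm{BMO}(\mathbb{R}_+)$ from Theorem~\ref{thm BMO equiv} together with the classical fact that $\log|x| \in \mathrm{BMO}(\mathbb{R})$, but the direct two-case computation above is short enough to present in full and matches the flavour of the rest of the paper.
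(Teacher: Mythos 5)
Your proof is correct, and it takes a genuinely different route from the paper's. The paper proves the proposition by a single exact computation: choosing the constant $\log b^{2\lambda}$ (the value of $b$ at the right endpoint of $B=(a,b)$), integrating $|\log x^{2\lambda}-\log b^{2\lambda}|\,x^{2\lambda}$ by parts over $(a,b)$, and observing that the resulting closed-form expression, a multiple of $\frac{(a/b)^{2\lambda+1}\log(a/b)}{1-(a/b)^{2\lambda+1}}$ plus a constant, stays bounded for all $0<a/b<1$ (using $t^{2\lambda+1}\log t\to 0$ as $t\to 0^+$ at one end and a L'H\^opital-type limit at the other). Your proof instead adopts the far-from/near-the-origin dichotomy that the paper used for Proposition~\ref{Apmu Aplambda}: when $x_0\geq 3R$ the oscillation is bounded pointwise because $t\sim x_0$, and when $x_0<3R$ a scaling $t\mapsto t/(4R)$ reduces everything to the fixed integral $(2\lambda+1)\int_0^1 |\log u+\frac{1}{2\lambda+1}|\,u^{2\lambda}\,du<\infty$. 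The paper's argument is shorter and avoids case analysis, but yours makes the scale-invariance transparent and lines up with the structure of the rest of the paper; both are valid. Two small remarks: in Case~1 the pointwise bound for $t/x_0\in[2/3,4/3]$ is $\log(3/2)$, not $\log(4/3)$ (immaterial, still a universal constant); and the alternative you sketch via Theorem~\ref{thm BMO equiv} plus the classical fact $\log|x|\in\mathrm{BMO}(\R)$ is legitimate, since that theorem's proof is independent of this proposition, though it is heavier machinery than either direct calculation needs.
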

\begin{proof}
Let $B:=(a,b)$ be an interval in $\R_+$.
 Then 
\begin{align*}
\frac{1}{b^{2\lambda+1}-a^{2\lambda+1}}\int^{b}_{a}\left|\log x^{2\lambda}-\log b^{2\lambda}\right|\cdot x^{2\lambda}dx
&=\frac{1}{b^{2\lambda+1}-a^{2\lambda+1}}\int^{b}_{a}2\lambda\left|\log (\frac{x}{b})\right|\cdot x^{2\lambda}dx\\
&=\frac{-2\lambda}{b^{2\lambda+1}-a^{2\lambda+1}}\int^{b}_{a}\log \frac{x}{b}\cdot x^{2\lambda}dx\\
&=\frac{-2\lambda}{b^{2\lambda+1}-a^{2\lambda+1}}\bigg(\log \frac{x}{b}\cdot\frac{x^{2\lambda+1}}{2\lambda+1}\bigg| ^b_a-\int^b_a\frac{1}{x}\frac{x^{2\lambda+1}}{2\lambda+1}dx\bigg)\\
&=\frac{-2\lambda}{b^{2\lambda+1}-a^{2\lambda+1}}\bigg(-\log \frac{a}{b}\cdot\frac{a^{2\lambda+1}}{2\lambda+1}-\frac{b^{2\lambda+1}-a^{2\lambda+1}}{(2\lambda+1)^2}\bigg),
\end{align*}
which implies that
\begin{align*}
 \frac{1}{b^{2\lambda+1}-a^{2\lambda+1}}\int^{b}_{a}\left|\log x^{2\lambda}-\log b^{2\lambda}\right|\cdot x^{2\lambda}dx
 &= \frac{2\lambda}{b^{2\lambda+1}-a^{2\lambda+1}}\cdot\log \frac{a}{b}\cdot\frac{a^{2\lambda+1}}{2\lambda+1}+\frac{2\lambda}{(2\lambda+1)^2}  \\
 &\underset{\lambda}{\lesssim}\frac{\log \frac{a}{b}}{(\frac{b}{a})^{2\lambda+1}-1}+1.
\end{align*}
Since
\[
\lim_{t\to0^+}t^{2\lambda+1}\log t=0,
\]
then we have
\[
\log x^{2\lambda}\in {\rm BMO_{\triangle_\lambda}(\R_+)}.
\]
The proof is complete.
\end{proof}

\medskip
\begin{itemize}
    \item \noindent {\bf Counterexample:}
\end{itemize}

Consider the function $f$ to be a Dirac mass at point $\varepsilon\ll 1 $, then for $x>e^{\frac{1}{2\lambda+2}}$ the commutator
\[
[b,R_\lambda]f(x)\sim \log x\cdot \left(\frac{1}{x}\right)^{2\lambda+1}\cdot \varepsilon^{2\lambda}-\left(\frac{1}{x}\right)^{2\lambda+1}\cdot\log (\varepsilon) \cdot\varepsilon^{2\lambda}.
\]
Then define \[g(x):= \log x\cdot \left(\frac{1}{x}\right)^{2\lambda+1}\cdot \varepsilon^{2\lambda}-\left(\frac{1}{x}\right)^{2\lambda+1}\cdot\log (\varepsilon) \cdot\varepsilon^{2\lambda},\]
which is decreasing when $x>e^{\frac{1}{2\lambda+2}}$, and hence
\begin{align*}
    t \cdot\mu(\{x\in\R_+ :|[b,R_\lambda]f(x)|>t\})&\geq t \cdot\mu(\{x>e^{\frac{1}{2\lambda+2}} :|[b,R_\lambda]f(x)|>t\}) \\
    &\sim t\bigg((g^{-1}(t))^{2\lambda+1}-1\bigg).
\end{align*}
Since
\begin{align*}
    \lim_{t\to 0} t\cdot \left(g^{-1}(t)\right)^{2\lambda+1}=\lim_{t\to \infty}g(t)\cdot t^{2\lambda+1}= \infty,
\end{align*}
then
\[
 t |\{x\in\R_+ :|[b,R_\lambda]f(x)|>t\}|\underset{t\to 0^+}{\longrightarrow}\infty.
\]

\bigskip
\bigskip

\noindent {\bf Acknowledgement:} J. Li and B. D. Wick are supported by ARC DP 220100285. B. D. Wick is partially supported by NSF-DMS \# 2000510, \# 2054863, and \# 1800057. C.-W. Liang is supported by MQ --NTU Cotutelle PhD Scholarship. C.-Y.
Shen was supported in part by NSTC through grant 111-2115-M-002-010-MY5.

\bigskip 

\printbibliography[heading=bibintoc,title={References}]

(J. Li) Department of Mathematics, Macquarie University, NSW, 2109, Australia.\\ 
{\it E-mail}: \texttt{ji.li@mq.edu.au}\\

(C.-W. Liang) Department of Mathematics, National Taiwan University, Taiwan.
\\ 
{\it E-mail}: \texttt{d10221001@ntu.edu.tw}\\

(C.-Y. Shen) Department of Mathematics, National Taiwan University, Taiwan. \\ {\it E-mail}: \texttt{cyshen@math.ntu.edu.tw}\\

(B.D. Wick) Department of Mathematics
Washington University - St. Louis
. \\ {\it E-mail}: \texttt{wick@math.wustl.edu}

\vspace{0.3cm}

\end{document}